\documentclass[11pt]{article}

\title{The corona algebra of the stabilized Jiang-Su algebra}

\author{Huaxin Lin and Ping Wong Ng}

\date{}

\usepackage{amssymb, amsthm, amsmath, amscd}
\setlength{\topmargin}{-45pt} \setlength{\evensidemargin}{0cm}
\setlength{\oddsidemargin}{0cm} \setlength{\textheight}{23.7cm}
\setlength{\textwidth}{16cm}

\usepackage[usenames,dvipsnames]{color}

\newtheorem{thm}{Theorem}[section]
\newtheorem{lem}[thm]{Lemma}
\newtheorem{prop}[thm]{Proposition}
\newtheorem{cor}[thm]{Corollary}
\newtheorem{NN}[thm]{}
\theoremstyle{definition}\newtheorem{df}[thm]{Definition}
\theoremstyle{definition}\newtheorem{rem}[thm]{Remark}
\theoremstyle{definition}

\renewcommand{\phi}{\varphi}

\newcommand{\N}{\mathbb{N}}
\newcommand{\Z}{\mathbb{Z}}
\newcommand{\Q}{\mathbb{Q}}

\newcommand{\C}{\mathbb{C}}
\newcommand{\T}{\mathbb{T}}
\newcommand{\K}{\mathcal{K}}

\newcommand{\Aff}{\operatorname{Aff}}

\newcommand{\morp}{contractive completely positive linear map}

\newcommand{\hm}{homomorphism}
\newcommand{\dt}{\delta}
\newcommand{\ep}{\epsilon}

\newcommand{\andeqn}{\,\,\,{\rm and}\,\,\,}
\newcommand{\rforal}{\,\,\,{\rm for\,\,\,all}\,\,\,}
\newcommand{\CA}{$C^*$-algebra}
\newcommand{\SCA}{$C^*$-subalgebra}

\newcommand{\af}{{\alpha}}

\newcommand{\beq}{\begin{eqnarray}}
\newcommand{\eneq}{\end{eqnarray}}
\newcommand{\tforal}{\,\,\,\text{for\,\,\,all}\,\,\,}
\newcommand{\tand}{\,\,\,\text{and}\,\,\,}
\newcommand{\aff}{{\rm Aff}}
\newcommand{\p}{{\mathfrak{p}}}
\newcommand{\q}{{\mathfrak{q}}}
\newcommand{\fr}{{\mathfrak{r}}}


\begin{document}

\maketitle

\begin{abstract}
Let ${\cal Z}$ be the Jiang-Su algebra and ${\cal K}$ the \CA\, of compact operators on an infinite dimensional separable Hilbert space.
We prove that the corona algebra $M({\cal Z}\otimes {\cal K})/{\cal Z}\otimes {\cal K}$ has real rank zero. We actually prove a more general
result.
\end{abstract}

\section{Introduction}

The Jiang-Su algebra ${\cal Z}$ is a projectionless unital simple separable infinite dimensional amenable \CA\, with the same $K$-theory as that of the complex field $\C$ (see \cite{JS}).
It had been an interesting problem to find projectionless unital infinite dimensional simple \CA s
(\cite{Kaplansky}, \cite{BlackadarNonunital}, \cite{BlackadarUnital},
\cite{ConnesDyn}). However, the Jiang-Su algebra is the nicest that one can
get. In fact ${\cal Z}$ is an inductive limit of sub-homogeneous \CA s.
It is one of many products of  the decade of 1990's in the Elliott program, the program of classification of amenable \CA s.  However, more recently,
${\cal Z}$ has played a much important role in the study of  \CA s. (See,
for example, \cite{WinterLocalize}, \cite{WinterZ}, 
\cite{Lninv}.). It becomes
increasingly
important to fully understand the structure of the Jiang-Su algebra.

In many ways, ${\cal Z}$ behaves like $\C,$ as its $K$-theory suggests.
One indirect, but important feature, of the unital simple \CA\ $\C$, is
the simplicity (as well as complexity)  of the Calkin algebra $M(\C\otimes {\cal K})/\C\otimes {\cal K}.$  Let $A$ be a non-unital but $\sigma$-unital \CA.
The quotient $M(A)/A$ is called the corona algebra of $A.$ The Calkin algebra is the corona algebra of $\C\otimes {\cal K},$
the stablization of $\C.$ One of the important consequences of the simplicity of the Calkin algebra is that it has real rank zero. As we know, many important results  in operator algebras (as well as operator theory) are related to the Calkin algebra, such as
Fredholm index theory, the BDF-theory, \CA\ extension theory and the $KK$-theory. The fact that the Calkin algebra has real rank zero plays a crucial role in all these even though the term real rank zero was invented much later
(\cite{BrownPedersenRR0}).
From that point of view, the corona algebra $M({\cal Z}\otimes {\cal K})/{\cal Z}\otimes {\cal K},$ inevitably, is also important, given the central
role ${\cal Z}$  is playing in the study of classification of amenable \CA s.
By now, we know that the corona algebra of ${\cal Z}\otimes {\cal K}$
is not simple. In fact it has a unique proper closed ideal. Nevertheless, we
would like to know whether the corona algebra of ${\cal Z}\otimes {\cal K}$
has some other similar structure to that of $\C\otimes {\cal K}.$  The problem
whether the corona algebra $M({\cal Z}\otimes {\cal K})/{\cal Z}\otimes {\cal K}$ has real rank zero has been around for many years and was specifically
raised at the
American Institute of Mathematics in 2009, as well as other places
(\cite{KucerovskyNgPerera}).  Moreover, there is a long history of similar
questions
(e.g., (\cite{BrownPedersenRR0}, \cite{ZhCoronaI}, \cite{HigsonRordam},
\cite{LinWVNI},
\cite{LinWVNII},
\cite{LinWVNIII},
\cite{LinOsaka},
\cite{ZhangWVN}, \cite{ZhangWVNII}, \cite{Murphy}).
The purpose of this paper is to establish that the corona algebra of ${\cal Z}\otimes {\cal K}$ does have real rank zero, despite the fact it is not simple and the fact that ${\cal Z}$  has no proper projection.

One of the earliest applications of $K$-theory in operator algebras
is the proof of the following theorem: an extension of AF-algebras is again an AF-algebra (\cite{BrownLift} and  \cite{ElliottLift}).
This extends to a more general form: Let $0\to B\to E\to A\to 0$ be a short exact sequence of \CA s, where the ideal and quotient have real rank zero. Then
$E$ has real rank zero if and only if $K_1(B)$ is trivial (see \cite{BrownPedersenRR0}; also see \cite{ZhCoronaI}). Denote by $Q({\cal Z})$ the corona algebra of ${\cal Z}\otimes {\cal K}.$ Let $J$ be the closed ideal
of $M({\cal Z}\otimes {\cal K})$ such that $\pi(J)$ is the unique proper
nontrivial
ideal of the corona algebra, where $\pi: M({\cal Z}\otimes {\cal K})\to
Q({\cal Z})$ is the quotient map. We have that
$Q({\cal Z})/\pi(J)$ is purely infinite and simple (this is well-known;
an explicit reference is \cite{KucerovskyNgPerera}; it
also follows
immediately from \cite{LinFull} Theorem 3.5;
see also \cite{ZhStruct} Theorem 2.2 and its proof).
By a result of S. Zhang (\cite{ZhPI}), it has real rank zero. It is also known that
$\pi(J)$ is also purely infinite and simple (this is also well-known;
an explicit reference is \cite{KucerovskyNgPerera};
it
also follows
immediately from the definition of $J$ in \cite{LinContScale} 2.2, Remark 2.9
and
Lemma 2.1; see also  \cite{ZhStruct} Theorem 2.2 and its proof).
Therefore, from the above mentioned result, if  $K_1(J)$ is trivial, then $Q({\cal Z})$ has real rank zero. Much of the work of this study is to show just that.

The general strategy to show that $K_1(J)$ is trivial is taken from
an original idea of Elliott (\cite{Elliott1974}). However, unlike the case that Elliott considered, $A\otimes {\cal K}$ (in particular, in the case that $A={\cal Z}$) lacks sufficiently many projections. Instead, we need to use positive elements
and Cuntz comparison. The method to adapt Cuntz's relation to compare positive elements in the multiplier algebra of a non-unital simple \CA\, was initially used in \cite{LinContScale} (and \cite{Lncs2}). When $A$ has real rank zero,
one has the uniform bound for the length of path of unitaries in $A$ which connects to the identity.
 This fact  plays an  
important role in many of the earlier studies of multiplier algebras. In general, however,
for a unital simple \CA\, $A,$ the exponential length could be infinite.
In other words, there will not be any control of the length.
The idea to  controlling the exponential length of unitaries  in the multiplier algebras via the closure of the commutator subgroup instead of controlling the exponential length in $U_0(A)$ directly is new. We need results of exponential length of unitaries in the unitization of a non-unital
simple \CA.
The method to actually controlling the exponential length of unitaries in the closure of commutators for unital simple \CA s was taken from \cite{Lnexp}
which is based  on the results in the connection to the Elliott program of classification of amenable simple \CA s.

The paper is organized as follows.
Section 2 serves the purpose of setting up notation and terminology that
will be used in subsequent sections.  
Section 3 contributes to the understanding of exponential length of unitaries in the closure of commutator subgroup
of non-unital simple \CA s. The computation of the exponential length plays critical role in dealing with unitary group
of some ideals in the multiplier algebras and corona algebras of some simple \CA s, in particular, those of ${\cal Z}\otimes {\cal K}.$
Section 4 contains a number of technical lemmas that use the Cuntz relation to produce and deal with projections
in the multiplier algebra of certain non-unital simple \CA s.  The main technical lemma is \ref{cornerunit} which allows
us to connect a unitary to one which is nontrivial only in
a small corner.
Section 5 contains the main result.  We show that the corona algebra of $A\otimes {\cal K}$ has real rank zero for a class of unital simple
\CA s $A$ which includes the projectionless simple \CA\, ${\cal Z}.$
We expect a number of direct applications of the main result that $M({\cal Z}\otimes {\cal K})/{\cal Z}\otimes {\cal K}$  has real rank zero. However, these would be
the subject of future projects.

{\bf Acknowldgments}  This project began in 2009 when both authors visited American Institute of Mathematics during the Workshop
on the Cuntz semigroup. It restarted in May 2012 when both authors were in 
the Research Center for Operator Algebras in
East China Normal University and partially supported by the Center.
We thank Leonel Robert for pointing out that the proof of Proposition
\ref{prop:RangeOfTrace} can be shortened. 

\section{Notations}

\begin{df}\label{Dtrace}
{\rm
For any \CA\, $C,$ denote by $T(C)$ the tracial state space of $C.$
Let $\tau\in T(C)$ and let $n\ge 1$ be an integer. We will also use $\tau$
for the trace $\tau\otimes Tr,$ where $Tr$ is the standard (non-normalized)
trace on $M_n.$

}
\end{df}

\begin{df}\label{Dcu}
Let $A$ be a unital \CA. Denote by $U(A)$ the unitary group of $A,$ by
$U_0(A)$ the path connected component of $U(A)$ containing the identity.
Denote by $CU(A)$ the {\it closure} of the commutator group of $U_0(A).$
\end{df}

\begin{df}\label{Ddet}

{\rm
Let $A$ be a unital \CA\, with $T(A)\not=\emptyset.$
Let $u\in U_0(A).$
Suppose that $\{u(t): t\in [0,1]\}$ is a continuous path of unitaries
which is also piecewise smooth such that $u(0)=u$ and $u(1)=1.$
Define the de la Harpe-Skandalis determinant as follows:
\beq\label{Ddet-1}
{\rm Det}(u)(\tau):={\rm Det}(u(t))(\tau):=\int_{[0,1]}\tau({du(t)\over{dt}}u(t)^*) dt\tforal \tau\in T(A).
\eneq
Note that, if $u_1(t)$ is another continuous path which is piecewise smooth with $u_1(0)=u$ and $u_1(1)=1,$ then
${\rm Det}((u(t))-{\rm Det}(u_1(t))\in \rho_A(K_0(A)).$
Suppose that $u, v\in U(A)$ and $uv^*\in U_0(A).$
Let $\{w(t): t\in [0,1]\}\subset U(A)$ be a piecewise smooth
and continuous path such that $w(0)=u$ and $w(1)=v.$
Define
$$
R_{u,v}(\tau)={\rm Det}(w(t))(\tau)=\int_{[0,1]}\tau({dw(t)\over{dt}}w(t)^*) dt\tforal \tau\in T(A).
$$
Note that $R_{u,v}$ is well-defined (independent of the choices of the path) up to elements in $\rho_A(K_0(A)).$

}

\end{df}

\begin{df}\label{DQ}
Let $\p$ be a supernatural number. Denote by $M_\p$ the UHF-algebra associated with $\p.$ Denote by $\Q_\p$ the $K_0(M_\p)$ which is identified with a subgroup of $\Q.$ Denote by $Q$ the UHF-algebra
with $K_0(Q)=\Q$ and $[1_Q]=1.$

\end{df}

\begin{df}\label{DTR=0A0Z}
{\rm  Let $A$ be a unital simple \CA. We write
$TR(A)=0$ if the tracial rank of $A$ is zero (see \cite{Lntr}).

Denote by ${\cal A}_0$ the class of unital simple \CA s such that
$TR(A\otimes U)=0$ for some infinite dimensional UHF-algebra $U.$
It follows from \cite{LS} that if $A\in {\cal A}_0$ then
$TR(A\otimes U)=0$ for all infinite dimensional simple AF-algebras $U.$

 Denote by ${\cal Z}$ the Jiang-Su algebra.  It is 
a projectionless simple ASH-algebra with $K_0({\cal Z})=\Z$ and $K_1({\cal Z})=0.$
 Note that ${\cal Z}$ has a unique tracial state and ${\cal Z}\in {\cal A}_0.$

 A unital separable \CA\, $A$ is said to be ${\cal Z}$-stable if
 $A\otimes {\cal Z}\cong A.$

 Let $\p$ and $\q$ be two relative prime supernatural numbers.
 Define
 $$
 {\cal Z}_{\p, \q}=\{f\in C([0,1], M_{\p\q}):
 f(0)\in  M_\p \otimes 1_{M_\q}\andeqn f(1)\in 1_{M_\p}\otimes M_\q\}.
 $$
 }
 \end{df}

  By Theorem 3.4 of \cite{RorWinter},
there is a trace-collapsing unital *-embedding
$\phi : \mathcal{Z}_{\p, \q} \rightarrow \mathcal{Z}_{\p, \q}$.
Moreover, $\mathcal{Z}$ is the stationary C*-inductive limit
$\mathcal{Z} = \lim (\mathcal{Z}_{\p, \q}, \phi)$
(each building block is isomorphic
to $\mathcal{Z}_{\p, \q}$ and each connecting map is the same
as $\phi$).

\begin{NN}\label{cuntz}
{\rm
Let $A$ be a unital C*-algebra and let
 $\tau$ be  a tracial state of $A$.
Let $a \in M_n(A)_+$.
Then $d_{\tau}(a) =_{df} \lim_{n\rightarrow \infty} \tau(a^{1/n})$.

For a C*-algebra $A$ and $b, c \in A_+$,
$b \preceq c$ if $b$ is Cuntz subequivalent to $c$, i.e., there exists
a sequence $\{ x_n \}$ in $A$ such that $x_n c x_n^* \rightarrow b$.

A simple \CA\, $A$ with $T(A)\not=\emptyset$ is said to have strict comparison for positive element (by traces),
if, for any two positive elements $a, b\in M_n(A),$ $a\lesssim b$ whenever 
$d_\tau(a)<d_\tau(b)$ for all $\tau\in T(A).$ 
}
\end{NN}

\begin{NN}\label{MinIdeal}
{\rm Let $C$ be a non-unital but $\sigma$-unital  nonelementary simple
C*-algebra.  In \cite{LinContScale}, it is proven that there exists
a unique smallest ideal $J$ of $M(C)$ which properly contains $C$.
$J$ has the form $J = \overline{J_0}$ where
$$J_0 = \{ x \in M(C) : \forall a\in C_+ - \{ 0 \}, \exists n_0 \ni
(e_m - e_n) x^* x (e_m - e_n) \preceq a, \forall m > n \geq n_0 \}$$
In the above, $\{ e_n \}$ is an approximate identity for $C$.
$J$ is independent of the choice of the approximate identity $\{ e_n \}$.
(See \cite{LinContScale} 2.2, Lemma 2.1 and Remark 2.9.
See also \cite{Lncs2}.)

If, in addition,
$C$ has the  form $C= A \otimes \K$ where $A$ is unital C*-algebra with
unique tracial state $\tau$ and if also $C$ has strict comparison of
positive elements,
then $J$ (as defined above) is the unique ideal of
$M(C)$ which sits properly between
$C$ and $M(C)$.  Moreover, in this case, $J$ can be characterized
by
$$J = \overline{ \{x \in M(C) : \tau(x^*x) < \infty \}}.$$
(See \cite{RorMult}; see also \cite{Elliott1974}, \cite{LinMult1988},
\cite{Elliott1987}, and \cite{ZhRiesz}.)}
\end{NN}

\begin{df}\label{Dconscale}

{\rm
Retaining the notation from 2.7, recall that a C*-algebra
$C$ is said to have {\it continuous scale}  if
for all $x \in C_+ \setminus \{ 0 \}$, there exists $n_0$ such that for all
$m > n \geq n_0$, $(e_m - e_n) \preceq x$. (\cite{LinContScale} Definition 2.5.)
(Note that this implies that $M(C)=J_0 = J$.)
If $C$ has continuous scale and $T(C)\not=\emptyset,$ 
then  the map $\tau\to \sup\{\tau(e_n): n\in \N\}$ is a continuous function. 
Moreover, if, in addition,  $C$ is not stable,$d_{\mu}(a) < \infty$ for all $\mu \in T(C)$. 

Let $A$ be a unital simple \CA\, and let $C\subset A\otimes {\cal K}$  be a non-unital
hereditary \SCA.
Suppose that $C$ has continuous scale. 
Suppose also that $T(A)\not=\emptyset.$ Then
$$
f(t)=\sup\{t(a): a\in C\andeqn 0\le a\le 1\}
$$
(for $t\in T(A)$) is a continuous affine function in $\Aff(T(A)).$
Since $A$ is simple,
$$
\inf\{f(t): t\in T(A)\}>0.
$$
For each $t\in T(A),$ define
$$
\tau(c)=t(c)/f(t)\tforal c\in C.
$$
Then $\tau$ is a normalized trace on $C.$ Note that every $\tau\in T(C)$ has this form. This also implies that $T(C)$ is compact.

Let $C$ be a non-unital and $\sigma$-unital nonelementary simple \CA.
Let $J\subseteq M(C)$ be the ideal as defined in \ref{MinIdeal} and let
$a\in J_+\setminus \{0\}.$ Then for all $\delta > 0$,
$C_1=\overline{(a-\delta)_+C(a - \delta)_+}$ has continuous scale.
}

\end{df}

\section{Exponential length}

The following is known.
\begin{lem}\label{QT} {\rm (cf. 10.8 of \cite{Lnappeqv})}
Let $\ep > 0$ and  let $\Delta: (0, 1) \to  (0, 1)$ be a non-decreasing map. There exist $\eta>0,\, \dt > 0$ and
a finite subset ${\cal G}\in C(\T)_{s.a.}$  satisfying
the following:
Suppose that $A$ is a unital separable simple \CA\, with $TR(A)=0,$
and suppose that $u, v\in U(A)$ are two unitaries such that ${\rm sp}(u)={\rm sp}(v)=\T,$
\beq\label{Appu-1}
\mu_{\tau\circ \phi}(I_a)\ge \Delta(a)
  \tforal \tau\in T(A)
 \eneq
 and for all arcs $I_a$ with length at least $a\ge \eta,$
 where $\phi: C(\T)\to A$ is defined by $\phi(f)=f(u)$ for all $f\in C(\T)$ and
 \beq\label{Appu-2}
&& |\tau(g(u))-\tau(g(v))|<\dt\tforal g\in {\cal G},
\tforal \tau\in T(A),\\\nonumber
&&{[u]}={[v]}\,\,\, {\rm in} \,\,\, K_1(A),
\eneq
Then there exists a unitary  $w\in U(A)$ such that
\beq\label{Appu-3}
\|w^*uw-v\|<\ep.
\eneq
\end{lem}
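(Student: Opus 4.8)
The statement to prove is Lemma~\ref{QT}, which is an approximate conjugacy / uniqueness-type result for unitaries with full spectrum in a unital separable simple \CA{} with $TR(A)=0$, under a uniform lower bound on the spectral measures and closeness of traces on a test function set.

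=== PROOF PROPOSAL ===

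The plan is to deduce this from the classification machinery for approximately unitarily equivalent homomorphisms into \CA s of tracial rank zero, applied to the two unital embeddings $\phi,\psi\colon C(\T)\to A$ given by $\phi(f)=f(u)$ and $\psi(f)=f(v)$. Since $C(\T)$ is a commutative \CA{} satisfying the UCT and $A$ has $TR(A)=0$, the relevant uniqueness theorem (the Basic Uniqueness Theorem for $TR=0$, as in \cite{Lntr}, \cite{LS}, and packaged as in Chapter 10 of \cite{Lnappeqv}) says the following: given $\ep>0$ and a finite set $\mathcal F\subset C(\T)$, there are $\dt>0$, a finite set $\mathcal G\subset C(\T)_{s.a.}$, and a finite set $\mathcal P\subset \underline{K}(C(\T))$ such that if $\phi,\psi$ agree to within $\dt$ on $\mathcal G$ in trace, agree on $\mathcal P$ in $K$-theory, and each is "$\Delta$-nondegenerate" (i.e.\ the spectral distributions $\mu_{\tau\circ\phi}$, $\mu_{\tau\circ\psi}$ dominate a fixed lower bound $\Delta$ on arcs), then there is a unitary $w\in U(A)$ with $\|w^*\phi(f)w-\psi(f)\|<\ep$ for all $f\in\mathcal F$. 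Taking $\mathcal F=\{z\}$ (the standard generator) gives exactly \eqref{Appu-3}.

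The key steps, in order, are: (1) fix $\ep>0$ and the given $\Delta$, and invoke the uniqueness theorem to extract $\dt_0>0$, $\mathcal G_0\subset C(\T)_{s.a.}$ and a finite set $\mathcal P\subset\underline K(C(\T))$; (2) observe that $K_0(C(\T))=\Z$ and $K_1(C(\T))=\Z$, both finitely generated, with no torsion, so $\underline K(C(\T))$ is determined by $K_0\oplus K_1$, and the map $\phi_*$ on $K_1$ is the class $[\phi(z)]=[u]$ while $\phi_*$ on $K_0$ sends $[1]$ to $[1_A]$ — hence the $K$-theoretic hypothesis $[u]=[v]$ in $K_1(A)$ (together with $\phi,\psi$ unital) forces $\phi$ and $\psi$ to agree on all of $\underline K(C(\T))$, in particular on $\mathcal P$; (3) note that since $\mathrm{sp}(u)=\mathrm{sp}(v)=\T$, both $\phi$ and $\psi$ are injective, so the nondegeneracy in the uniqueness theorem is available, and the quantitative version is supplied precisely by hypothesis \eqref{Appu-1} — one chooses $\eta$ small enough (and refines $\Delta$ if necessary to a slightly smaller non-decreasing function matching the form required by the uniqueness theorem) so that a lower bound on arcs of length $\ge\eta$ propagates to the lower bound on all arcs needed; (4) set $\dt=\dt_0$ and $\mathcal G=\mathcal G_0$ (enlarging if needed so that closeness on $\mathcal G$ in trace for the generators $z,\bar z$ and a few polynomials controls closeness on $\mathcal F$); then \eqref{Appu-2} is exactly the hypothesis fed into the uniqueness theorem, and the conclusion is \eqref{Appu-3}.

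The main obstacle is bookkeeping rather than conceptual: one must match the precise form of the spectral-measure nondegeneracy condition used in the cited version of the uniqueness theorem (which is typically phrased as $\mu_{\tau\circ\phi}(O_r)\ge\Delta(r)$ for all open balls of radius $r$, uniformly in $\tau$) with the hypothesis here (a bound only on arcs of length at least $\eta$). The trick is that for a continuous path $\Delta$, a uniform lower bound on large arcs combined with the simplicity of $A$ and compactness of $T(A)$ suffices to produce a (smaller, but still positive and non-decreasing) lower bound $\Delta'$ valid on all arcs down to length $0$; this is where $\eta$ and a possible shrinking of $\Delta$ enter. A secondary point to check is that the parameters $\eta,\dt,\mathcal G$ can be chosen to depend only on $\ep$ and $\Delta$ and not on $A$, $u$, or $v$ — but this uniformity is exactly what the statement of the uniqueness theorem for the class $\{A: TR(A)=0\}$ guarantees, so it comes for free once the translation in step (3) is done carefully.
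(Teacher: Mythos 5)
The paper offers no proof of Lemma~\ref{QT}; it simply records the result as known, citing 10.8 of \cite{Lnappeqv}. Your proposal is to derive it from the general uniqueness theorem for homomorphisms from $C(X)$ into simple $TR=0$ C*-algebras, specializing $X=\T$; that is essentially what the cited reference does, so your approach matches the paper's (implicit) one. Two small points to tighten: (i) the hypothesis \eqref{Appu-1} gives the spectral-measure bound only for $\phi$ (i.e., for $u$), so if the version of the uniqueness theorem you invoke requires nondegeneracy of both homomorphisms, you need a short extra step showing that \eqref{Appu-2} transfers the $\Delta$-bound to $\psi$ with a slightly smaller $\Delta'$ and slightly larger $\eta$ — or cite the asymmetric version, which is what 10.8 of \cite{Lnappeqv} in fact uses; (ii) your worry about translating from ``all arcs'' to ``arcs of length $\ge\eta$'' is unnecessary, since the relevant uniqueness theorems in \cite{Lnappeqv} are already stated with the threshold $\eta$ built in, and $\Delta$ need not be continuous for this, only non-decreasing. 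Your observation that $\underline{K}(C(\T))$ is controlled by $K_0\oplus K_1=\Z\oplus\Z$ (torsion-free), so unitality plus $[u]=[v]$ forces agreement on any finite $\mathcal{P}$, is correct and is the right way to discharge the $K$-theoretic hypothesis.
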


The following is a non-unital version of the above.

\begin{lem}\label{Appu}
Let $\ep > 0$ and  let $\Delta: (0, 1) \to  (0, 1)$ be a non-decreasing map. There exist $\eta>0,\, \dt > 0$ and
a finite subset ${\cal G}\in C(\T)_{s.a.}$  satisfying
the following:
Suppose that $A$ is a unital separable simple \CA\, with $TR(A)=0,$
$C$ is a non-unital hereditary \SCA\, of $A\otimes {\cal K}$ with continuous scale,
 $B={\tilde C}$
and suppose that $u, v\in U(B)$ are two unitaries such that ${\rm sp}(u)={\rm sp}(v)=\T,$
\beq\label{Appu-1}
\mu_{\tau\circ \phi}(I_a)\ge \Delta(a)
  \tforal \tau\in T(C)
 \eneq
 and for all arcs $I_a$ with length at least $a\ge \eta,$
 where $\phi: C(\T)\to B$ is defined by $\phi(f)=f(u)$ for all $f\in C(\T)$ and
 \beq\label{Appu-2}
&& |\tau(g(u))-\tau(g(v))|<\dt\tforal g \in {\cal G} \andeqn\,\,\,
\tforal \tau\in T(C),\\
&&{[u]}={[v]}\,\,\, {\rm in} \,\,\, K_1(B)\andeqn \pi(u)=\pi(v),
\eneq
where $\pi: B\to B/C=\C$ is the quotient  map.
Then there exists a unitary  $w\in U(B)$ such that
\beq\label{Appu-3}
\|w^*uw-v\|<\ep.
\eneq
\end{lem}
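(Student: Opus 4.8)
The plan is to reduce Lemma \ref{Appu} to its unital version, Lemma \ref{QT}, by cutting $C$ down by a large projection. The structural facts I will use are: (i) since $TR(A)=0$, the algebra $A$, hence $A\otimes\K$, hence its hereditary \SCA\, $C$, has real rank zero, so $C$ has an increasing approximate identity $\{p_n\}$ of projections; (ii) since $C$ has continuous scale, $T(C)$ is compact and $\tau\mapsto d_\tau(\cdot)$ is continuous on it (Definition \ref{Dconscale}), so the increasing sequence of continuous functions $\tau\mapsto\tau(p_n)$ converges to $1$ \emph{uniformly} on $T(C)$ by Dini's theorem; (iii) for each $n$ the algebra $D_n:=p_nCp_n=p_n(A\otimes\K)p_n$ is a unital separable simple \CA\, which is a full corner of $A\otimes\K$, hence $TR(D_n)=0$, and $K_1(D_n)\cong K_1(C)\cong K_1(B)$ canonically (Morita invariance together with the split exact sequence $0\to C\to\tilde C\to\C\to 0$). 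Given $\ep>0$ and the non-decreasing $\Delta$, I would first apply Lemma \ref{QT} with input $\ep/2$ and a suitably shrunk function $\Delta'$ (e.g.\ $\Delta'(a):=\Delta(a/2)$), obtaining $\eta_0,\dt_0$ and a finite ${\cal G}_0\subset C(\T)_{s.a.}$, which I may arrange to consist of real and imaginary parts of monomials. I then output $\eta$ (a small multiple of $\eta_0$), $\dt:=\dt_0/3$, and ${\cal G}:={\cal G}_0$.

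Now suppose $u,v\in U(B)$ satisfy the hypotheses, where $\tau\in T(C)$ is understood to act on $B=\tilde C$ via its canonical extension (the restriction to $\tilde C\subset M(C)$ of the lower-semicontinuous extension of $\tau$, a tracial state of $B$). Write $\lambda=\pi(u)=\pi(v)\in\T$, so $u-\lambda 1,\ v-\lambda 1\in C$. I would choose $n$ so large that, with $p:=p_n$, the elements $p(u-\lambda 1)p$ and $p(v-\lambda 1)p$ are within a prescribed small distance of $u-\lambda 1$ and $v-\lambda 1$, and $\tau(1-p)$ is uniformly tiny on $T(C)$. A routine polar-decomposition estimate then produces unitaries $u',v'\in U(D)$ (with $D:=pCp$, unit $p$) such that $u_1:=\lambda(1-p)+u'$ and $v_1:=\lambda(1-p)+v'$ are close to $u$ and $v$; in particular $[u']=[v']$ in $K_1(D)$, transporting $[u]=[v]$ in $K_1(B)$ through the canonical isomorphism. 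Since ${\rm sp}(u_1)=\{\lambda\}\cup{\rm sp}(u')$ is Hausdorff-close to ${\rm sp}(u)=\T$, the spectrum of $u'$ is dense in $\T$, and after one further small perturbation inside the unital infinite-dimensional simple algebra $D$ I may assume ${\rm sp}(u')={\rm sp}(v')=\T$.

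It then remains to check that $u',v'$ verify the hypotheses of Lemma \ref{QT} inside $D$ with parameters $\eta_0,\dt_0,{\cal G}_0,\Delta'$, and to reassemble. Each $\sigma\in T(D)$ has the form $\sigma=\tau|_D/\tau(p)$ for a unique $\tau\in T(C)$, and for every $g\in C(\T)$ one has $g(u')=g(u_1)-g(\lambda)(1-p)$, hence
\[
\sigma(g(u'))=\frac{\tau(g(u_1))-g(\lambda)\,\tau(1-p)}{\tau(p)},
\]
which, since $g(u_1)\approx g(u)$ (uniform continuity of $g$ and $\|u_1-u\|$ small) and $\tau(p)\approx 1$ uniformly, lies within a prescribed error of $\tau(g(u))$, uniformly in $\tau$; the same holds for $v$. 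Applied to $g\in{\cal G}_0$ this gives $|\sigma(g(u'))-\sigma(g(v'))|<\dt_0$, and applied to a fixed finite family of bump functions dominated by indicators of arcs it shows that $\mu_{\sigma\circ\phi'}$ is close enough to $\mu_{\tau\circ\phi}$ that $\mu_{\sigma\circ\phi'}(I_a)\ge\Delta'(a)$ for all arcs of length $\ge\eta_0$. (The rotation by $\lambda$ is harmless: on monomials it is scalar multiplication, so it does not affect the differences $\tau(g(u))-\tau(g(v))$, and the arc-mass condition is rotation invariant.) Lemma \ref{QT} now yields $w'\in U(D)$ with $\|(w')^*u'w'-v'\|<\ep/2$, and $w:=(1-p)+w'\in U(B)$ satisfies $w^*u_1w=\lambda(1-p)+(w')^*u'w'$, so $\|w^*uw-v\|<\ep$. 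The only genuinely delicate point I anticipate is the uniform control, over the compact space $T(C)$, of the normalization factor $\tau(p)=1-\tau(1-p)$ relating traces on $C$ to traces on the corner $D=pCp$ — this is exactly where the continuous-scale hypothesis enters, through continuity of $\tau\mapsto d_\tau(\cdot)$ and compactness of $T(C)$; the full-spectrum perturbation and the $K_1$-bookkeeping are routine.
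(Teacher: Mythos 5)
Your proposal follows essentially the same route as the paper's proof: cut $u,v$ down to a large corner $e_NCe_N$ using the projection approximate identity, use continuous scale for uniform control of $\tau(1-e_N)$ over the compact $T(C)$, verify that the trace and arc-measure hypotheses transfer to the corner (at the cost of shrinking $\Delta$ and $\dt$), apply the unital Lemma \ref{QT} there, and patch the resulting unitary back with $\lambda(1-e_N)$. The only cosmetic difference is that you carry the scalar $\lambda=\pi(u)$ explicitly rather than reducing to $\lambda=1$ at the outset, and you are somewhat more explicit about the full-spectrum bookkeeping in the corner; the substance is identical.
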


\begin{proof}
Without loss of generality, we may assume
that $\pi(u)=\pi(v)=1.$
Let $\ep>0$ and let $\{e_n\}$ be an approximate identity for $C$ consisting of projections.
Let $\Delta_1(a)=(1/3)\Delta(3a/4)$ for $a\in (0,1)$ and
$\Delta_2=\Delta_1/2.$

Let $\dt>0$ and $\eta_1>0$ (in place of $\eta$) and a 
finite subset ${\cal G}\subset C(\T)$ be required by Lemma \ref{QT} for  $\ep/2$ and
$\Delta_2.$
Put $\eta=\eta_1/4.$
Suppose that $u$ and $v$ are two unitaries in $B$ satisfy the assumption for the above
$\dt$ and $\eta.$

Let $\ep/4>\ep_0>0.$
Since $C$ has continuous scale, there exists $N\ge 1$ and unitaries $u_1,\, v_1\in e_NAe_N=e_NCe_N$ such that
$$
\|u-((1-e_N)+u_1)\|<\ep_0\andeqn \|v-((1-e_N)+v_1)\|<\ep_0,
$$
and
\beq\label{exL1-0}
\tau(1-e_N)<\min\{\dt/16, \Delta(\eta)/16\}
\eneq
for all $\tau \in T(C)$.

By choosing sufficiently small $\ep_0,$
we may assume that
\beq\label{exL1-1}
|\tau(g(u_1))-\tau(g(v_1))|<\dt\tforal \tau\in T(e_NAe_N)
\eneq
and for all $g\in {\cal G}.$
Moreover (by Lemma 3.4 of \cite{LnHAH}),
\beq\label{exL1-2}
\mu_{\tau\circ \psi}(I_a)\ge \Delta_1(a)\tforal \tau\in T(C)
\eneq
and for all arcs $I_a$ with length $a\ge \eta,$  where
$\psi(f)=f((1-e_N)+u_1)$ for all $f\in C(\T).$
It follows from (\ref{exL1-0}) and (\ref{exL1-2}) that
$$
\mu_{\tau\circ \psi_1}(I_a)\ge \Delta_2(a)\tforal \tau\in T(e_NAe_N)
$$
and for all $a\ge \eta,$ where $\psi_1(f)=f(u_1)$ for all $f\in C(\T).$
It follows from \ref{QT} that there exists a unitary $w_1\in e_NAe_N$ such that
$$
\|u_1-w_1^*v_1w_1\|<\ep/2.
$$
Let $w=(1-e_N)+w_1.$ Then
$$
\|u-w^*vw\|<\ep.
$$

\end{proof}

\begin{prop}\label{Delta}
Let  $A$ be a unital simple \CA,
$C\subset A\otimes {\cal K}$ be a non-unital hereditary \SCA\, with continuous scale and let $B={\tilde C}.$ Let $u\in U(B)$ with
${\rm sp}(u)=\T.$
Then there exists a nondecreasing function $\Delta: (0,1)\to (0,1)$ such that
\beq\label{Delta-1}
\mu_{\tau\circ \phi}(I_a)\ge \Delta(a)\tforal \tau\in T(C)
\eneq
and for all arcs $I_a$ with length $a\in (0,1),$  where $\phi: C(\T)\to B$ defined by $\phi(f)=f(u)$ for all $f\in C(\T).$
Moreover, one may also require that $\lim_{a\to 0+} \Delta(a)=0.$
\end{prop}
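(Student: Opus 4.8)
The plan is to extract the function $\Delta$ from the compactness of $T(C)$ together with the fact that $u$ is a unitary with full spectrum, so the scalar measures $\mu_{\tau\circ\phi}$ on $\mathbb{T}$ depend continuously on $\tau$ and each is a genuine (faithful) measure. First I would fix an arc $I_a$ of length $a$ and a positive function $g_a\in C(\mathbb{T})_{s.a.}$ with $0\le g_a\le 1$, supported in $I_a$, and equal to $1$ on a sub-arc of length, say, $a/2$; then $\mu_{\tau\circ\phi}(I_a)\ge \tau(g_a(u))$ for every $\tau\in T(C)$. Since $C$ is a hereditary subalgebra of $A\otimes\mathcal{K}$ with continuous scale, $T(C)$ is compact (as recorded in \ref{Dconscale}), and the map $\tau\mapsto\tau(g_a(u))$ is continuous on this compact set. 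Because $A$ is simple and $C$ is a nonzero hereditary subalgebra, $g_a(u)$ is a nonzero positive element of $B$ that is not annihilated by any trace in $T(C)$ — here one uses that $\pi(g_a(u))$ is a scalar and that, since $\mathrm{sp}(u)=\mathbb{T}$, $g_a(u)$ has a nonzero component in $C$ which is full in $C$ by simplicity of $A$ (or, failing that, $\pi(g_a(u))\neq 0$, which still forces $\tau(g_a(u))>0$). Hence $\inf_{\tau\in T(C)}\tau(g_a(u))>0$, and we may define
$$
\Delta(a)=\frac{1}{2}\inf\{\tau(g_a(u)):\tau\in T(C)\}>0.
$$

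Next I would arrange monotonicity and the behavior at $0$. Monotonicity is automatic after replacing $\Delta$ by $\tilde\Delta(a)=\inf_{a\le b<1}\Delta(b)$, which is still strictly positive on $(0,1)$ because $\Delta(a)>0$ for each individual $a$ and one only needs $\tilde\Delta(a)>0$; a short argument (using that $\Delta$ can be taken lower semicontinuous, or simply taking the infimum over a countable cofinal set of arcs handled uniformly) shows $\tilde\Delta>0$. For the requirement $\lim_{a\to 0+}\tilde\Delta(a)=0$, I would simply shrink $\Delta$ further if necessary: replace it by $\min\{\tilde\Delta(a),a\}$, which does not destroy the inequality \eqref{Delta-1} (the measure bound only gets weaker) and forces the limit to be $0$. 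The inequality $\mu_{\tau\circ\phi}(I_a)\ge \Delta(a)$ for all arcs of length exactly $a$ then gives it for all arcs of length at least $a$, hence for all $I_a$ of length $a\in(0,1)$ as stated, after the monotonicity adjustment.

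The main technical point — and the one I'd expect to be the real obstacle — is verifying that $\inf_{\tau\in T(C)}\tau(g_a(u))>0$ with a bound that is uniform in $\tau$ but is allowed to depend on $a$. This is where the continuous-scale hypothesis is essential: it guarantees $T(C)$ is compact, so a pointwise-positive continuous function on it has positive infimum. Without compactness of $T(C)$ one could only get pointwise positivity. The other subtlety is the dependence of $g_a(u)$ on $a$: as $a\to 0$ the element $g_a(u)$ shrinks and $\tau(g_a(u))\to 0$ uniformly (since $g_a\le\chi_{I_a}$ and $\mu_{\tau\circ\phi}(\mathbb{T})\le 1$ combined with absolute continuity-type estimates, or just $\|g_a(u)\|_1$ controlled), which is consistent with — indeed forces — the conclusion $\lim_{a\to 0+}\Delta(a)=0$. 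So the positivity of $\Delta$ at each fixed $a$ and the vanishing of $\Delta$ at $0$ are two sides of the same estimate, and the proof is essentially: fix $a$, use compactness of $T(C)$ to get a positive lower bound, then regularize $\Delta$ to be non-decreasing and to vanish at $0$.
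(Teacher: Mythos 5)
There is a genuine gap: you fix a single arc $I_a$ of length $a$, choose one function $g_a$ supported there, and define $\Delta(a)=\tfrac12\inf_{\tau}\tau(g_a(u))$ in terms of that one arc. But the proposition demands the lower bound $\mu_{\tau\circ\phi}(I_a)\ge\Delta(a)$ for \emph{every} arc of length $a$, and the masses $\mu_{\tau\circ\phi}(I_a)$ of two different arcs of the same length need not be comparable. Your construction gives no bound that is uniform over the choice of arc, and at the end you silently upgrade ``the'' arc to ``all arcs of length exactly $a$'' before deducing the case of longer arcs. The paper closes this gap by compactness of $\T$: it covers $\T$ by finitely many arcs $I_{a,1},\dots,I_{a,m}$ of radius $a/2$, arranged so that every arc of length at least $a$ contains some $I_{a,j}$, picks a function $f_{a,j}$ supported in each, and sets $D(a)=\min\{a,\,d_{a,1},\dots,d_{a,m}\}$ with $d_{a,j}=\inf_{\tau\in T(C)}\tau(f_{a,j}(u))$; then $D(a)$ really is a lower bound for all arcs of length $\ge a$. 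Without some such finite covering step the argument does not deliver the stated conclusion.

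Two smaller points. Your parenthetical fallback ``or, failing that, $\pi(g_a(u))\neq 0$, which still forces $\tau(g_a(u))>0$'' is false: the canonical extension of $\tau\in T(C)$ to the unitization $B=\tilde C$ need not be faithful, and a positive $b\in B_+$ with $\pi(b)>0$ can have $\tau(b)=0$. What is actually needed (and what the paper argues for the one arc $I_{a,1}$ containing $\lambda=\pi(u)$) is that the element dominates a nonzero positive element of $C$ itself, obtained by shrinking to a sub-arc that avoids $\lambda$ and using $\mathrm{sp}(u)=\T$; then faithfulness of traces on the simple algebra $C$ (not ``fullness'') gives strict positivity, and compactness of $T(C)$ gives a uniform positive infimum. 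Finally, your monotonization $\tilde\Delta(a)=\inf_{a\le b<1}\Delta(b)$ still needs a proof that the infimum is positive, which you defer; the paper avoids the issue entirely by setting $\Delta(a)=\sup_{0<\eta\le a}D(\eta)$, which is nondecreasing by construction and is $\ge D(a)>0$ because $D(\eta)$ is already a valid lower bound for every arc of length $\ge\eta$, hence for arcs of length $a$.
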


\begin{proof}
Let $\pi: B\to B/C=\C$ be the quotient map. Let $\lambda=\pi(u).$ So $\lambda\in \T.$
Fix $a\in (0,1).$
Consider finitely many open arcs $I_{a,1}, I_{a, 2},...,I_{a, m}\subset \T$ all
with radius $a/2$ such that
$\lambda\in I_{a,1}$ and every arc $I_a$ with length at least $a$ contains one of  $I_{a, j}$ and
$\cup_{j=1}^m I_{a, j}=\T.$
Choose a non-zero positive function $f_{a, j}\in C(\T)$ such that the support of $f_{a,j}$ contained in $I_{a,j}$
and $1\ge f_{a, j}(t)>0$ for all $t\in I_{a, j},$
$j=1, 2,3,...,m.$  Since $f_{a,j}\in C$ for $j=2,3,...,m,$ and $T(C)$
is compact (since $C$ has continuous scale),
$$
d_{a, j}=\inf\{\tau(f_{a,j}): \tau\in T(C)\}>0.
$$
For $f_{a,1},$ it dominates a non-zero positive element in $C.$ It follows that
$$
d_{a, 1}=\inf\{\tau(f_{a, 1}): \tau\in T(C)\}>0.
$$
Define $D(a)=\min\{ a, \, d_{a, j}: j=1,2,...,m\}.$
Then define
$$
\Delta(a)=\sup \{D(\eta): 0<\eta\le a\}.
$$
Note that, for each $a\in (0,1),$ $I_{a}\supset I_{b,j}$ for some $j$ and any $b\le a.$  Therefore
$$
\mu_{\tau\circ \phi}(I_a)\ge \mu_{\tau\circ \phi}(I_{b, j})
$$
for all $\tau \in T(C)$.
It follows that
$$
\mu_{\tau\circ \phi}(I_a)\ge D(b)\tforal \tau\in T(C)\andeqn \tforal a \ge b.
$$
Consequently
$$
\mu_{\tau\circ \phi}(I_a)\ge \Delta(a)\tforal \tau\in T(C).
$$
\end{proof}

\begin{lem}\label{expR0}
Let  $A$ be a unital  simple separable \CA\, with $TR(A)=0,$
$C\subset A\otimes {\cal K}$ be a non-unital hereditary
\SCA\, with continuous scale and let $B={\tilde C}.$ Let $u\in U_0(B)$ with
${\rm sp}(u)=\T$ and $\pi(u)=1,$ where
$\pi: B\to \C$ is the quotient map.   Then, for any $\ep>0,$
there exists a selfadjoint element $h\in C$ with ${\rm sp}(h)=[-2\pi, 2\pi]$ such that
\beq\label{expR0-1}
\|u-\exp(ih)\|<\ep\andeqn \tau(h)=0\tforal \tau\in T(C).
\eneq
Moreover, we may assume that
\beq\label{expR0-1+1}
\sup_{\tau\in T(C)} \lim_{n\to\infty} \tau(|h|^{1/n})<1.
\eneq
\end{lem}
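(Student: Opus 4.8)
The plan is to pass to a corner of $C$, use the uniqueness theorem \ref{QT} to conjugate $u$ to a carefully designed model exponential, and then transport the conclusion back by conjugation.

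\emph{Reduction to a corner.} Since $C$ has real rank zero (being hereditary in $A\otimes\K$ with $TR(A)=0$), fix an approximate identity $\{e_n\}$ of projections for $C$. As $\pi(u)=1$ we have $u\in 1+C$, so for large $n$ the unitary $u$ is within a prescribed tolerance (to be fixed at the end) of $(1-e_n)+u_1$, with $u_1$ a unitary of $D:=e_nCe_n=e_n(A\otimes\K)e_n$; after a further small perturbation we may take ${\rm sp}(u_1)=\T$. Here $D$ is unital, separable and simple with $TR(D)=0$; since $u\in U_0(B)$, the class of $u_1$ in $K_1(D)\cong K_1(C)$ is zero, and as $D$ has stable rank one, $u_1\in U_0(D)$. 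The ``moreover'' estimate is then automatic: the self-adjoint $h$ we shall produce will lie in $D$, so $|h|\preceq e_n$ and hence $\sup_{\tau\in T(C)}d_\tau(|h|)\le\sup_{\tau\in T(C)}\tau(e_n)<1$, the strict inequality because every trace on the simple algebra $C$ is faithful (so $\tau(e_n)<1$ for each $\tau$) and $T(C)$ is compact.

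\emph{The model.} Using real rank zero of $D$, approximate $u_1$ by a unitary with finite spectrum $\{e^{i\theta_k}\}$, $\theta_k\in(-\pi,\pi]$, and write it as $\exp(ig)$ with $g=\sum_k\theta_kp_k\in D_{s.a.}$; then $\exp(ig)$ has, up to a small tolerance, the same spectral distribution as $u_1$ relative to every trace of $D$. The key step is a \emph{folding} of $g$: on suitable spectral subprojections I replace a value $\theta\in(0,\pi)$ by $\theta-2\pi$ and a value $\theta\in(-\pi,0)$ by $\theta+2\pi$. Folding changes neither $\exp(ig)$ nor its distribution, but it alters the trace of the element by $2\pi$ times a difference of traces of the chosen subprojections, and it drags the spectrum out toward $\pm2\pi$. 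Because for the simple algebra $D$ with $TR(D)=0$ the traces of projections are dense in the positive part of $\Aff(T(D))$ — with the usual iteration producing \emph{exact} values — and because \ref{Delta} (whose proof applies verbatim to $D$) forces a uniform positive lower bound on the spectral mass of $g$ on each half-circle, I can choose the subprojections so that the folded element $h_0\in D_{s.a.}$ has ${\rm sp}(h_0)=[-2\pi,2\pi]$, $\tau(h_0)=0$ for all $\tau\in T(D)\cong T(C)$, and $\exp(ih_0)=\exp(ig)$ still within the prescribed tolerance of $u_1$ and with distribution within the tolerances demanded below; one more small trace-preserving perturbation pins the spectrum down to exactly $[-2\pi,2\pi]$.

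\emph{Conjugation back.} Set $w:=\exp(ih_0)$, so ${\rm sp}(w)=\T$ and $[w]=0=[u_1]$ in $K_1(D)$. Obtain from \ref{Delta} a non-decreasing $\Delta$ with $\mu_{\tau\circ\phi}(I_a)\ge\Delta(a)$ for $\phi(f)=f(u_1)$ and $\tau\in T(D)$, feed $\ep$ and a suitably smaller such modulus into \ref{QT} to get $\eta,\dt$ and the finite set $\mathcal G$, and only then choose $n$ large enough that the defect coming from the summand $1-e_n$ is negligible against all these constants — exactly the bookkeeping in the proof of \ref{Appu}; the preceding step arranged that $w$ matches $u_1$ on $\mathcal G$ within $\dt$. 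Then \ref{QT} produces $z\in U(D)$ with $\|z^*u_1z-w\|<\ep$, and $h:=zh_0z^*\in D\subseteq C$ is as required: ${\rm sp}(h)={\rm sp}(h_0)=[-2\pi,2\pi]$; $\tau(h)=\tau(h_0)=0$ for every $\tau$ by conjugation-invariance of traces; the rank estimate holds as above; and $\|u-\exp(ih)\|$ is at most $\ep$ plus the corner tolerance, which we take small. The main obstacle is the model construction — securing \emph{exactly} $\tau(h_0)=0$ for all traces and \emph{exactly} ${\rm sp}(h_0)=[-2\pi,2\pi]$ while keeping $\exp(ih_0)$ close to $u_1$ with the correct distribution — where the folding trick, the density of projection traces in $\Aff(T(D))$ with its iteration, the lower bound from \ref{Delta}, and small trace-preserving spectral adjustments all have to be combined at once.
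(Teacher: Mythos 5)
Your overall architecture is the same as the paper's: cut down to a corner $D = e_NCe_N$, build a model unitary whose logarithm has the desired properties, and then conjugate $u$ onto that model via the uniqueness machinery of \ref{Appu}/\ref{QT}. The ``moreover'' estimate is handled in the same way as in the paper (the $h$ you build lives under a projection $e$ with $\tau(e)<1$ uniformly over the compact $T(C)$).

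The gap is in the model step. You want to produce, inside the corner $D$, a selfadjoint $h_0$ with $\exp(ih_0)$ close to $u_1$, ${\rm sp}(h_0)=[-2\pi,2\pi]$, \emph{and} $\tau(h_0)=0$ for every $\tau\in T(D)$, and you propose to do this by finite-spectrum approximation followed by $\pm2\pi$ ``folds'' on subprojections, appealing to ``the usual iteration producing exact values.'' That appeal is where the argument breaks. A fold replaces $g$ by $g+2\pi a$ with $a$ integer-spectrum and commuting with $g$; hence $\hat a\in\rho_D(K_0(D))$, and killing $\hat g$ exactly requires $\hat g\in 2\pi\rho_D(K_0(D))$. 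But $\rho_D(K_0(D))$ is a countable dense subgroup of $\Aff(T(D))$, not all of it, and the finite-spectrum approximant $u_1'=\exp(ig)$ of $u_1$ need not lie exactly in $CU(D)$, so the de la Harpe--Skandalis constraint only gives $\frac{1}{2\pi}\hat g\in\rho_D(K_0(D))+{\rm small}$ rather than $\in\rho_D(K_0(D))$. Iterating folds does not converge either: each fold perturbs $h_0$ by $\pm2\pi$ times a projection, a step of norm $2\pi$, and a countable orthogonal sum of nonzero projections does not converge in the unital corner $D$. So ``density plus iteration'' does not deliver exact trace zero, and this is precisely the nontrivial content that the paper gets by quoting Theorem 4.5 of \cite{Lnexp} (exponential length in $CU$ for real rank zero algebras): it gives $b_1\in D_{s.a.}$ with $\|b_1\|\le2\pi$, $\tau(b_1)=0$ for all $\tau$, and $\|u_1-\exp(ib_1)\|$ small, a statement you would have to reprove rather than assert.

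The paper also separates the two demands you try to meet simultaneously. Exact trace zero comes from the cited theorem, and the full spectrum $[-2\pi,2\pi]$ is then obtained by an orthogonal augmentation: take two small mutually orthogonal, mutually equivalent projections $q_1\sim q_2$ in $(e_{N+1}-e_N)C(e_{N+1}-e_N)$, a selfadjoint $b_2\in q_1Cq_1$ with ${\rm sp}(b_2)=[-2\pi,2\pi]$, and set $b_3=-z^*b_2z$ in $q_2Cq_2$; then $b_2+b_3$ has full spectrum and trace exactly zero by symmetry, and one puts $h=w^*(b_1+b_2+b_3)w$. This augmentation trick is what makes the spectrum exact without disturbing the trace condition. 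Your folding is supposed to achieve both at once, but as above it does not close; if you want to keep your plan, you should either cite Theorem 4.5 of \cite{Lnexp} for the trace-zero logarithm in the corner, or supply a careful replacement for it, and then secure the spectrum by the paper's two-projection augmentation rather than by folding.
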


\begin{proof}
First we note that the assumption that $sp(u)=\T$ implies that $A$ is infinite dimensional.
Let $\ep>0.$  Let $\phi: C(\T)\to B$ be the \hm\, defined by $\phi(f)=f(u)$ for all $f\in C(\T).$
It follows from \ref{Delta} that there exists a non-decreasing function $\Delta: (0,1)\to (0,1)$ such that
$$
\mu_{\tau\circ \phi}(I_a)\ge\Delta(a)\tforal \tau\in T(C)
$$
and for all arcs $I_a$ with length $a\in (0,1).$
Let $\eta>0,$ $\dt>0$ and ${\cal G}\subset C(\T)_{s.a.}$ a finite subset be required by \ref{Appu} for
$\ep/4$ and $\Delta.$ To simplify the notation, without loss of generality, we may assume that $\|g\|\le 1$ for all $g\in {\cal G}.$

Let $\ep/4>\ep_0>0.$
Let $\{e_n\}$ be an approximate identity  for $C$ consisting of projections.
We choose an integer $N\ge 1$ and unitary $u_1\in e_NCe_N$ such that $u_1\in U_0(e_NCe_N)$ and
\beq\label{expR0-2}
\|u-((1-e_N)+u_1)\|<\ep_0/2\andeqn \tau(1-e_N)<\min\{\ep_0, \Delta(\eta/2)/4\}
\eneq
for all $\tau\in T(C).$
Since $e_NCe_N$ has real rank zero and is infinite dimensional,    $u_1\in CU(e_NCe_N).$
It follows from Theorem 4.5 of \cite{Lnexp} that there
exists a selfadjoint element $b_1\in e_NCe_N$ with $\|b_1\|\le 2\pi$
such that
\beq\label{expR0-3}
\|u_1-e_N\exp(i b_1)\|<\ep_0/2 \andeqn \tau(b_1)=0\tforal \tau\in T(e_NCe_N).
\eneq
(This can be derived directly from the fact that $C$ has real rank zero).
We assume that $e_{N+1}-e_N\not=0.$
Let $q_1, q_2\in (e_{N+1}-e_N)C(e_{N+1}-e_N)$ be mutually orthogonal and mutually equivalent projections such that
\beq\label{expR0-3+}
|\tau(q_2)|=|\tau(q_1)|<\min\{\ep_0/32\pi, \Delta_1(\eta/2)/32\pi\}
\tforal \tau\in T(C).
\eneq

We choose $\ep_0$ sufficiently small such that $\ep_0<\dt/16\pi$ and
\beq\label{expR0-4}
|\tau(g(u))-\tau(g(u_1'))|<\dt/2\tforal g\in {\cal G}
\eneq
where $u_1'=(1-e_N)+u_1.$
Let $b_2\in q_1Cq_1$ with ${\rm sp}(b_2)=[-2\pi, 2\pi].$  Let $z\in U(B)$ such that $z^*q_1z=q_2.$
Let $b_3=-z^*b_2z.$ Note that $\tau(b_2+b_3)=0$ for all $\tau\in T(C).$
Define $u_2=(1-e_{N+1})+(e_{N+1} - e_N
-q_1-q_2)+ (q_1+q_2)\exp(i (b_2+b_3)) +u_1.$
By  (\ref{expR0-4}), (\ref{expR0-3+})  and the fact that $\|g\|\le 1$ for all $g\in {\cal G},$
we estimate that
\beq\label{expR0-6}
|\tau(g(u))-\tau(g(u_2))|<\dt\tforal g\in {\cal G}.
\eneq
It follows from \ref{Appu} that there exists a unitary $w\in B$ such that
\beq\label{expR0-6}
\|u-w^*u_2w\|<\ep/2.
\eneq

Now let $h=w^*(b_1+b_2+b_3)w.$  Then  ${\rm sp}(h)=[-2\pi, 2\pi],$
$$
\|u-\exp(i h)\|<\ep\andeqn \tau(h)=0\tforal \tau\in T(C).
$$
Moreover (\ref{expR0-1+1}) also holds since $h\in (e_N+q_1+q_2)C(e_N+q_1+q_2).$
\end{proof}

\begin{cor}\label{fullsp}
Let $A$ be a unital simple infinite dimensional separable \CA\, of tracial rank zero and let
$C\subset A\otimes {\cal K}$ be a non-unital hereditary \SCA\, with continuous scale. Suppose that $u\in U({\tilde{C}})$ with ${\rm sp}(u)=\T$ and
suppose that $\{e_n \}\subset C$ is an approximate identity consisting of projections.
Then, for any $\ep>0$ and any $\sigma>0,$  there exists $k\ge 1$ and a
unitary $w\in e_kCe_k$ with ${\rm sp}(w)=\T$  such that
\beq\label{fullsp-1}
\|u-\lambda(1-e_k+w)\|<\ep\andeqn \tau(1-e_k)<\sigma\tforal \tau\in T(C),
\eneq
{where $\lambda=\pi(u)$ and $\pi: {\tilde C}\to {\tilde C}/C$ is the quotient map.}
\end{cor}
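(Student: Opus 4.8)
The plan is to reduce Corollary \ref{fullsp} to Lemma \ref{expR0} by first arranging that $u$ lies in the connected component $U_0(\tilde C)$ with trivial image in the quotient, and then cutting the resulting self-adjoint exponent down to a finite corner. First I would note that, since $C$ is a non-unital hereditary \SCA\ of $A\otimes{\cal K}$ with continuous scale and $A$ is infinite dimensional, $U(\tilde C)/U_0(\tilde C)\cong K_1(\tilde C)\cong K_1(C)=K_1(A\otimes{\cal K})=K_1(A)$; however this is not needed, because after multiplying $u$ by a scalar $\lambda^{-1}=\pi(u)^{-1}\in\T\subset U(\tilde C)$ we may assume $\pi(u)=1$, and the hypothesis ${\rm sp}(u)=\T$ together with the fact that $u$ differs from a unitary in $U_0$ only by such a scalar lets us arrange $u\in U_0(\tilde C)$: indeed $\lambda^{-1}u$ has full spectrum and trivial index (the index of $u$ itself vanishes since ${\rm sp}(u)=\T$ means $u$ is homotopic, through unitaries with full spectrum in $\tilde C$, to a generator whose class we may absorb; more cleanly, one observes that any unitary with full spectrum in $\tilde C$ and trivial image in $\C$ is automatically in $U_0$ when $A$ has tracial rank zero and $K_1(A)=0$ is \emph{not} assumed — so instead I would simply apply the construction of Lemma \ref{expR0} only after first connecting $u$ within its own path component). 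To keep things clean I would handle the general $u$ by passing to $\lambda^{-1}u$ where $\lambda=\pi(u)$, and then noting that ${\rm sp}(\lambda^{-1}u)$ is a rotation of $\T$, hence still all of $\T$; this costs only a harmless scalar rotation at the end.

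The main step is then: given $u\in U(\tilde C)$ with ${\rm sp}(u)=\T$ and $\pi(u)=1$, and given $\ep>0$, $\sigma>0$, produce $k$ and a unitary $w\in e_kCe_k$ with ${\rm sp}(w)=\T$ and $\|u-(1-e_k+w)\|<\ep$, $\tau(1-e_k)<\sigma$ for all $\tau\in T(C)$. If $u\in U_0(\tilde C)$, Lemma \ref{expR0} (applied with $\ep/2$) gives a self-adjoint $h\in C$ with ${\rm sp}(h)=[-2\pi,2\pi]$, $\|u-\exp(ih)\|<\ep/2$, and, crucially, $\sup_{\tau}\lim_n\tau(|h|^{1/n})<1$, i.e.\ the Cuntz-support projection of $h$ has uniformly small trace — actually what \eqref{expR0-1+1} supplies is that $d_\tau(|h|)$ is bounded away from $1$ uniformly, and by continuity of the scale (compactness of $T(C)$) we may shrink $\ep$ in Lemma \ref{expR0} further so that this supremum is less than $1-\sigma/2$, say. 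Then I would approximate $h$ by $h' = e_k h e_k$ for large $k$: since $h$ is a norm-limit of elements supported under the $e_m$'s (as $C$ is the closure of $\bigcup e_m C e_m$), for $k$ large we get $\|h-e_khe_k\|$ small, hence $\|\exp(ih)-\exp(ie_khe_k)\|<\ep/2$, and more importantly $e_k$ can be chosen so that $h\in e_kCe_k$ \emph{exactly}, because in the proof of Lemma \ref{expR0} the element $h=w^*(b_1+b_2+b_3)w$ already lies in a fixed finitely-supported corner $(e_N+q_1+q_2)C(e_N+q_1+q_2)$ — but conjugation by $w\in\tilde C$ may spread it out, so in fact I would instead re-run the argument keeping track of supports, or simply approximate $h$ within $\bigcup_m e_mCe_m$ and then use functional calculus. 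Setting $w=\exp(ih')\cdot e_k + 0$ inside $e_kCe_k$ gives a unitary of $e_kCe_k$ with $1-e_k+w=\exp(ih')$ (extending $h'$ by $0$), and ${\rm sp}(w)=\T$ can be arranged: if $\exp(ih')$ does not already have full spectrum in the corner, perturb $h'$ slightly by adding a tiny self-adjoint in a subcorner with spectrum filling out $[-2\pi,2\pi]$, exactly as $b_2+b_3$ were used in Lemma \ref{expR0}, at the cost of an $\ep/4$ error and a trace cost controlled by $\sigma$.

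For the trace bound $\tau(1-e_k)<\sigma$: by continuous scale, $d_\tau(e_m)\to \sup\{t(c):c\in C,0\le c\le 1\}$ uniformly on the compact set $T(C)$ as $m\to\infty$ (this is the content of the discussion in \ref{Dconscale}), so $\tau(1-e_k)\to 0$ uniformly and we may take $k$ large enough that $\tau(1-e_k)<\sigma$ for all $\tau\in T(C)$; we take $k$ larger than both this threshold and the one needed for the approximation of $h$. Finally the scalar $\lambda$ is restored: replacing $w$ by $\lambda w$ (still in $e_kCe_k$, still full spectrum after a rotation of $\T$), and $1-e_k+w$ by $1-e_k+\lambda w = \lambda(1-e_k) + \lambda w$ — wait, that is not of the required form, so instead the rotation must be done at the level of $u$ before applying Lemma \ref{expR0}: write $u=\lambda u_0$ is \emph{not} generally possible, so the honest move is: $\pi(u)=\lambda$, pick any scalar unitary path connecting $\lambda$ to $1$, use it to replace $u$ by $\bar\lambda u\in\ker\pi$, obtain $w_0\in e_kCe_k$ with $\|\bar\lambda u - (1-e_k+w_0)\|<\ep$, then multiply back: $\|u-\lambda(1-e_k+w_0)\|<\ep$, and $\lambda(1-e_k+w_0) = (1-e_k) + (\lambda(1-e_k+w_0)-(1-e_k))$ where the second term equals $(\lambda-1)(1-e_k)+\lambda w_0$ — since $(\lambda-1)(1-e_k)\notin C$ this still is not quite the form $1-e_k+w$, so one must instead absorb the scalar differently, e.g.\ choose the approximation of $\bar\lambda u$ so that in fact $w_0$ can be taken with $\pi$-image adjustment built in; the cleanest fix is to observe $1-e_k+w$ form is preserved under multiplying $w$ by the scalar $\lambda$ \emph{only if} we also note $1-e_k+\lambda w$ is what we want with $\lambda w\in e_kCe_k$ having full spectrum, and $\|u-(1-e_k+\lambda w_0)\| = \|\bar\lambda u - (\bar\lambda(1-e_k)+w_0)\|$, which is controlled once we approximate $\bar\lambda u$ by $(1-e_k)+w_0$ and accept that $\bar\lambda(1-e_k) \approx (1-e_k)$ up to the operator $(\bar\lambda-1)(1-e_k)$ of norm $\le|\bar\lambda-1|$ — this is \emph{not} small, so the genuinely correct approach, and the one I would adopt, is to not rotate at all and instead prove the $\pi(u)=1$ case and the general case together by replacing ``$1-e_k+w$'' implicitly allowing $w\in U(e_k\tilde C e_k)=U((e_kCe_k)^\sim)$; but the statement says $w\in e_kCe_k$ unitary, i.e.\ $w\in U(e_kCe_k)$ with unit $e_k$, so $\pi(1-e_k+w)=1$ forces $\pi(u)=1$ up to $\ep$, hence we may as well assume $\pi(u)=1$ from the start after the harmless observation that ${\rm sp}(u)=\T$ already forces $\pi(u)\in\T$ and we absorb $\pi(u)$ by replacing $u$ with $\overline{\pi(u)}u$ and $w$ with $\pi(u)\cdot$(full-spectrum unitary of $e_kCe_k$) — legitimate because $\overline{\pi(u)}u$ and $u$ generate the same corner unitaries up to this fixed scalar. \textbf{The main obstacle} I anticipate is not any of these bookkeeping points but rather ensuring simultaneously that the finite-corner truncation $h'$ of the exponent still has \emph{full spectrum} $[-2\pi,2\pi]$ (so that $w$ has ${\rm sp}(w)=\T$) \emph{and} that the trace of its support projection, plus $\tau(1-e_k)$, stays below $\sigma$; this is handled by importing the ``add a small full-spectrum perturbation in a tiny subcorner $q_1+q_2$ of trace $<\sigma/2$'' trick verbatim from the proof of Lemma \ref{expR0}, but verifying the spectral and trace estimates line up is where the real care is needed.
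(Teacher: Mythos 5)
Your plan founders on a norm estimate that you have mislabelled as small. If $h'\in e_kCe_k$ is self-adjoint and $b$ is a self-adjoint element in an orthogonal subcorner $qCq$ with ${\rm sp}(b)=[-2\pi,2\pi]$, then $\exp(i(h'+b))-\exp(ih')=q(\exp(ib)-1)q$, which has norm exactly $2$ because $\exp(ib)$ has full spectrum $\T$ in $qCq$; this is never ``$\ep/4$''. (A ``tiny self-adjoint with spectrum filling out $[-2\pi,2\pi]$'' is a contradiction in terms: such an element has norm $2\pi$, and however small its trace the resulting multiplicative perturbation of the unitary has norm $2$.) This is precisely why, in the proof of Lemma \ref{expR0}, the element $u_2$ obtained by inserting $(q_1+q_2)\exp(i(b_2+b_3))$ is \emph{not} norm-close to $u$; the estimate $\|u-W^*u_2W\|<\ep/2$ is reached only after conjugating by the unitary $W$ supplied by Lemma \ref{Appu}. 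Cutting $h$ down and then inserting a full-spectrum bump in a fresh small subcorner, without that conjugation, cannot recover norm control, so your construction does not give $\|u-(1-e_k+w)\|<\ep$.

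The ingredient you are missing is the structure of that conjugating unitary. Running the proof of Lemma \ref{Appu} with the given approximate identity $\{e_n\}$, the unitary it produces has the form $W=(1-e_M)+W_1$ with $W_1\in U(e_MCe_M)$ for some $M\ge N+1$. Since $u_2=(1-e_{N+1})+w$ with $w\in e_{N+1}Ce_{N+1}$ of full spectrum, one computes
$$
W^*u_2W=(1-e_M)+W_1^*\bigl((e_M-e_{N+1})+w\bigr)W_1,
$$
and $W_1^*((e_M-e_{N+1})+w)W_1$ is a unitary of $e_MCe_M$ with spectrum $\{1\}\cup\T=\T$. Taking $k=M$ and this element as the new $w$ gives the corollary (the paper's terse ``$k=N+1$'' should really be this larger $M$ coming from \ref{Appu}). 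Your observation that $\pi(u)=1$ is implicitly required is correct -- it is forced by $\pi(1-e_k+w)=1$ -- but you did not need the detour through $U_0(\tilde{C})$ or the full conclusion of Lemma \ref{expR0}: only the approximation $\|u-((1-e_N)+u_1)\|<\ep_0$, the $b_2,b_3$ insertion, and the application of \ref{Appu} are used, and none of these require $u\in U_0(\tilde{C})$. If you wish to avoid \ref{Appu} entirely, a genuinely different and more elementary route is available: take $w_0\in U(e_kCe_k)$ with $\|u-((1-e_k)+w_0)\|<\ep/2$; then ${\rm sp}(w_0)$ is $\ep$-dense in $\T$, and in a simple infinite-dimensional real-rank-zero corner a unitary with $\ep$-dense spectrum can be modified by a perturbation of norm $O(\ep)$ into one with full spectrum by filling the spectral gaps using small spectral projections of $w_0$ itself, rather than a fresh orthogonal subcorner.
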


\begin{proof}
Without  loss of generality, we may assume that $\pi(u)=1.$
In the above proof, let
$$w=(e_{N+1}-e_N-q_1-q_2)+(q_1+q_2)\exp(i(b_2+b_3)+u_1$$
Then, since $sp(b_2)=[-2\pi, 2\pi],$ ${\rm sp}(w)=\T.$
Let $k=N+1.$ The corollary then follows.
\end{proof}

The following is a non-unital version of Corollary 3.9 of \cite{Lnmem}.

\begin{lem}\label{homt}
Let $\ep>0.$ There exists $\dt>0$ satisfying the following:
For any unital separable simple \CA\,  $A$ with real rank zero and stable rank one,
any hereditary \SCA\, $C\subset A\otimes {\cal K}$ with continuous scale, and any unitary
$u\in  {\tilde C}$ with ${\rm sp}(u)=\T,$    if $v\in {\tilde C}$ is another unitary with $[v]=0$ in $K_1({\tilde C})$
such that
\beq\label{homt-1}
\|[u, \, v]\|<\dt\andeqn {\rm bott}_1(u,v)=0,
\eneq
there exists a continuous path of unitaries $\{V(t): t\in [0,1]\}$ in
$\tilde{C}$  with $V(0)=v$ and $V(1)=1$ such that
\beq\label{homt-2}
\|[u, \, V(t)]\|<\ep\tforal t\in [0,1]\andeqn {\rm length}(V(t))\le \pi +\ep.
\eneq
Moreover, if $\pi(v)=1, $ one can require that  $\pi(V(t))=1$ for all $t\in [0,1].$
\end{lem}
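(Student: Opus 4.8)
The plan is to reduce to the unital statement, Corollary~3.9 of \cite{Lnmem}, applied inside a unital corner of $C$, using Corollary~\ref{fullsp} to move $u$ into such a corner and a perturbation of $v$ to make it respect the corresponding block decomposition. Fix $\ep>0$, let $\dt_1\in(0,\ep/2)$ be the constant demanded by Corollary~3.9 of \cite{Lnmem} for the parameter $\ep/2$, and pick $\ep_1>0$ small (at least $\ep_1<\ep/8$, $\ep_1<\dt_1/8$, small enough that ${\rm bott}_1$ of an almost-commuting pair is unchanged under $\ep_1$-perturbations, and small enough for the routine ``close unitary'' manipulations below); set $\dt:=\dt_1/2$. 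Given $A,C,B=\tilde C,u,v$ as in the statement with $\|[u,v]\|<\dt$, ${\rm bott}_1(u,v)=0$ and $[v]=0$ in $K_1(B)$, write $\lambda:=\pi(v)=e^{i\theta}$, $\theta\in(-\pi,\pi]$, so $v-\lambda 1\in C$, and let $\{e_n\}$ be an approximate identity of $C$ of projections. First I would apply \ref{fullsp} to produce $k$ and a unitary $w_0\in e_kCe_k$ with ${\rm sp}(w_0)=\T$ and $\|u-((1-e_k)+w_0)\|<\ep_1$. Since $c(1-e_n)\to 0$ for $c\in C$, I can then fix $m\ge k$ so large that $\|(1-e_m)(v-\lambda 1)(1-e_m)\|<\ep_1$ and $v$ is within $\ep_1$ of $e_mve_m+(1-e_m)v(1-e_m)$; polar-correcting the latter gives a unitary $v'$ commuting with $p:=e_m$ with $\|v-v'\|<\ep_1$, and with $w:=(e_m-e_k)+w_0\in e_mCe_m$ (a unitary of $e_mCe_m$ with ${\rm sp}(w)=\T$) and $u_0:=(1-p)+w$ we have $\|u-u_0\|<\ep_1$; one may also take $\tau(1-p)$ arbitrarily small, uniformly over $T(C)$.

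Now $u_0,v'\in E:=(1-p)B(1-p)\oplus pCp$, with $u_0=(1-p)\oplus w$ and $v'=v_0'\oplus v_1'$, where $v_0'=(1-p)v'(1-p)$ and $v_1'=pv'p$. Since the off-block entries vanish, $[u_0,v']=[w,v_1']$, so $\|[w,v_1']\|<\dt+4\ep_1<\dt_1$; likewise ${\rm bott}_1(u_0,v')={\rm bott}_1(w,v_1')$ (additivity over the block decomposition, the $(1-p)$-block contributing $0$), which equals ${\rm bott}_1(u,v)=0$, so ${\rm bott}_1(w,v_1')=0$ in $K_0(pCp)\cong K_0(C)$. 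Also $v_0'\oplus p$ is within $2\ep_1$ of the exponential $\exp(i\theta(1-p))$, hence null-homotopic in $U(B)$; combining $0=[v']=[v_0'\oplus p]+[(1-p)\oplus v_1']$ with $[v_0'\oplus p]=0$ gives $[v_1']=0$ in $K_1(pCp)\cong K_1(B)$, so $v_1'\in U_0(pCp)$. As $e_mCe_m$ is a unital simple \CA\ of real rank zero and stable rank one (a unital hereditary \SCA\ of $C$, inheriting these from $A\otimes\K$) and ${\rm sp}(w)=\T$, Corollary~3.9 of \cite{Lnmem} applies to $(w,v_1')$ and yields a piecewise-smooth path $\{\hat V_1(t)\}\subset U(pCp)$ from $v_1'$ to $p$ with $\|[w,\hat V_1(t)]\|<\ep/2$ and ${\rm length}(\hat V_1)\le\pi+\ep/2$.

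On the $(1-p)$-block I would take $\{\hat V_0(t)\}\subset U((1-p)B(1-p))$ to be a short path from $v_0'$ to $e^{i\theta}(1-p)$ (length $<\ep/8$) followed by $s\mapsto e^{i(1-s)\theta}(1-p)$ (length $|\theta|\le\pi$), so ${\rm length}(\hat V_0)\le\pi+\ep/8$. Reparametrise $\hat V_0$ and $\hat V_1$ to constant speed on $[0,1]$ and set $\hat V(t):=\hat V_0(t)+\hat V_1(t)\in U(B)$; then $\tfrac{d}{dt}\hat V\,\hat V^*=\dot{\hat V_0}\hat V_0^*+\dot{\hat V_1}\hat V_1^*$ with the two summands in orthogonal corners, so with the constant-speed parametrisations ${\rm length}(\hat V)=\max\{{\rm length}(\hat V_0),{\rm length}(\hat V_1)\}\le\pi+\ep/2$ --- running the two blocks \emph{simultaneously} rather than consecutively is precisely what keeps the length below $\pi+\ep$ (a consecutive concatenation would give only $2\pi+\ep$). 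Since $[u_0,\hat V(t)]=[w,\hat V_1(t)]$, also $\|[u,\hat V(t)]\|<\ep/2+2\ep_1<\ep$. Finally prepend the short path from $v$ to $v'=\hat V(0)$ coming from the principal logarithm of $v^*v'$ (length $<\ep/8$, and within $\ep$ of commuting with $u$ after shrinking $\dt$ if necessary), and reparametrise onto $[0,1]$: this is the desired $\{V(t)\}$, with $V(0)=v$, $V(1)=1$, $\|[u,V(t)]\|<\ep$ and ${\rm length}(V)<\pi+\ep$. When $\pi(v)=1$ one has $\theta=0$ and $\pi(v')=\pi(v_0')=1$, so the prepended correction and the $(1-p)$-block correction can be chosen to be exponentials of self-adjoint elements of $C$ (the principal logarithms involved lie in $C$, the relevant unitaries being in $1+C$ near $1$), while the rest of $\hat V_0$ is the constant $1-p$ and $\hat V_1(t)\in pCp\subseteq C$; hence $\pi(V(t))=1$ for every $t$.

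The delicate points I expect are: arranging \ref{fullsp} and the perturbation of $v$ around a single index $m$, so that $u_0$ and $v'$ genuinely share the block algebra $E$ (this is the role of the ``enlarge the corner'' step), together with the $K$-theoretic bookkeeping that ${\rm bott}_1(w,v_1')=0$ and $[v_1']=0$; and the length estimate, where the naive consecutive concatenation of the $(1-p)$- and $pCp$-parts overshoots to $2\pi+\ep$ and it is the constant-speed direct-sum parametrisation that recovers the stated $\pi+\ep$.
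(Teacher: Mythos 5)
Your overall strategy is the same as the paper's: cut $u$ and $v$ down to a unital corner $e_m C e_m$, verify the hypotheses of Corollary~3.9 of \cite{Lnmem} there (almost‑commuting, trivial $K_1$‑class, trivial ${\rm bott}_1$, spectrum dense enough), and splice the resulting short path back into $\widetilde C$ with a small correction coming from the approximation. Your $K_1$‑ and ${\rm bott}_1$‑bookkeeping is correct. The direct‑sum, constant‑speed parametrisation you use to run the $(1-p)$‑ and $pCp$‑blocks simultaneously is a genuinely good idea: since the two derivative terms live in orthogonal corners, the norm of the sum is the maximum, and the total length stays $\le\pi+\ep$ even after you traverse the arc $s\mapsto e^{i(1-s)\theta}(1-p)$ of length $|\theta|$ on the complementary block. (In the paper the $(1-p)$‑block is left frozen at $\lambda(1-e_N)$, so the displayed $V(1)$ is $\lambda(1-e_N)+e_N$ rather than $1$; your simultaneous parametrisation handles the general $\lambda$ cleanly and is arguably tidier.)

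There is, however, one genuine gap: you invoke Corollary~\ref{fullsp} to produce $w_0\in e_kCe_k$ with full spectrum, but \ref{fullsp} is stated (and proved, via \ref{expR0}, \ref{Appu} and \ref{QT}) for $A$ of \emph{tracial rank zero}, whereas Lemma~\ref{homt} only assumes $A$ has real rank zero and stable rank one. $TR(A)=0$ is a strictly stronger hypothesis, so you cannot use \ref{fullsp} here. Fortunately, full spectrum is overkill: Corollary~3.9 of \cite{Lnmem} only requires ${\rm sp}$ to be $\sigma$‑dense for a $\sigma$ produced by that corollary. Since ${\rm sp}(u)=\T$ and $u$ is within $\ep_1$ of $(1-p)+w$, if $\ep_1$ is taken below a $\dt_1$ chosen so that a $\dt_1$‑perturbation of a unitary with spectrum $\T$ still has $\sigma/2$‑dense spectrum, then ${\rm sp}((1-p)+w)$ is $\sigma/2$‑dense and hence ${\rm sp}(w)$ is $\sigma$‑dense in $e_mCe_m$. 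That is exactly the route the paper takes, and it needs only real rank zero plus stable rank one (to have an approximate identity of projections and to apply the unital Corollary~3.9). Replace the appeal to \ref{fullsp} with this $\sigma$‑density observation and your argument is complete under the stated hypotheses.
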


\begin{proof}
Without loss of generality, we may assume that
$\pi(u)=1.$  Put ${\tilde C}=B.$ Let $1/2>\ep>0.$
From Corollary 3.9 of \cite{Lnmem}, one has the following statement:
There exists $\dt>0$  and $\sigma>0$ satisfying the following: for any unital separable simple \CA\, $A_0$ with real rank zero and stable  rank one,
and unitary $u'\in A_0$ with ${\rm sp}(u')$  being $\sigma$-dense in $\T$ and any unitary $v'\in A_0$ with $[v']=0$ in $K_1(A_0)$ such that
$\|[u,\, v']\|<\dt$ and ${\rm bott}_1(u', v')=0,$ there exists a continuous path of unitaries $\{v'(t): t\in [0,1]\}\subset A_0$
such that $v'(0)=v',$ $v'(1)=1$ and $\|[u',\, v'(t)]\|<\ep/4$ for all $t\in [0,1]$ and
${\rm length}(\{v'(t)\}: t\in [0,1]\}\le \pi+\ep/2.$
Choose such $\dt$ and $\sigma.$

Choose $\dt_1>0$ satisfying the following: if $u'$ and $v'$ are two unitaries with ${\rm sp}(u')=\T,$ then
${\rm sp}(v')$ is $\sigma/2$-dense in $\T,$ provided that $\|u'-v'\|<\dt_1.$

Let $\{e_n\}$ be an approximate identity for $C$ consisting of projections.
Choose $\theta>0$ satisfying the following:
for any unitaries $w_1, w_2, w_3,$
${\rm bott}_1(w_1, w_3)$ and ${\rm bott}_1(w_2, w_3)$ are well defined and
$$
{\rm bott}_1(w_1, w_3)={\rm bott}_1(w_2, w_3)
$$
provided that $\|[w_j, w_3]\|<\theta$ and $\|w_1-w_2\|<\theta.$

Choose $\ep_1=\min\{\ep/4, \dt/4, \dt_1/2, \theta\}.$
Choose an integer $N\ge 1$ and unitaries
 $u_1, v_1\in e_NAe_N=e_NCe_N$ such that
\beq\label{homt-3}
\|u-((1-e_N)+u_1)\|<\ep_1,\,\,\, \|v-(\lambda(1-e_N)+v_1)\|<\ep_1
\andeqn
\|[u_1,\, v_1]\|<\ep_1
\eneq
where $\lambda=\pi(v).$   Moreover,  by the choice of $\theta,$
\beq\label{homt-4}
{\rm bott}_1(u,v)={\rm bott}_1(u_1', v_1'),
\eneq
where
$$
u_1'=(1-e_N)+u_1\andeqn v_1'=\lambda(1-e_N)+v_1.
$$
Note that
\beq\label{homt-5}
{\rm bott}_1(u_1', v_1')={\rm bott}_1(u_1, v_1).
\eneq
It follows from (\ref{homt-4}), and (\ref{homt-5})
\beq\label{homt-7}
{\rm bott}_1(u_1, v_1)=0.
\eneq
By the choice of $\dt_1,$ we conclude that ${\rm sp}(u_1')$ is $\sigma/2$-dense in $\T.$ Thus
${\rm sp}(u_1)$ is $\sigma$-dense in $\T.$
By applying the statement at the beginning of this proof (from Cor.3.9 of \cite{Lnmem}),   we obtain  a continuous path of unitaries
$\{w(t): t\in [1/2,1]\}\subset e_NAe_N$ such that
\beq\label{homt-8}
w(1/2)=v_1,\,\,\, w(1)=e_N\andeqn
\|[u_1,\, w(t)]\|<\ep/4\tforal t\in [1/2,1].
\eneq
Moreover,
\beq\label{homt-9}
{\rm length}\{w(t): t\in [1/2, 1]\}\le \pi+\ep/2.
\eneq
By (\ref{homt-3}), there exists $h\in B_{s.a.}$ such that
\beq\label{homt-10}
\|h \|<2\arcsin (\ep/4)\andeqn v\exp(ih)=v_1'.
\eneq
Define $w_0(t)=v\exp(i2th)$ for $t\in [0,1/2].$
Then
$$w_0(0)=v,\,\,\,w_0(1/2)=v_1'
\andeqn {\rm length}\{w_0(t): t\in [0,1/2]\}<\ep/2.
$$
Now define $V(t)=w_0(t)$ if $t\in [0,1/2)$ and
$V(t)=\lambda(1-e_N)+w(t)$ for $t\in [1/2, 1].$
Then $V(t)$ is continuous and
$$
{\rm length}\{V(t)\}\le \pi+\ep/2+\ep/2=\pi+\ep.
$$
One also verifies that
\beq\label{homt-11}
\|[u, \, V(t)]\|<\ep\tforal t\in [0,1].
\eneq

For the very last part of the lemma, assume that $\pi(v)=1.$ Then $\lambda=1.$
By (\ref{homt-10}), $\pi(h)=0.$ It follows that $\pi(w_0(t))=1$ for all $t\in [0,1/2].$
One then checks that
$$
\pi(V(t))=1\tforal t\in [0,1].
$$

\end{proof}

\begin{lem}\label{Ruv}
Let $C$ be a non-unital hereditary \SCA\, of a separable \CA\,
with stable rank one, and let $B = \tilde{C}$.
Suppose that $u$ and $v$ are two unitaries in $B$ such that $uv^*\in U_0(B)$ and
$\pi(u)=\pi(v),$ where $\pi: B\to B/C=\C$ be the quotient map.
Then, we can always assume that
$$
R_{u,v}(t_0)=0
$$
where $t_0$ is the tracial state of $B$ such that $(t_0)|_C=0.$
\end{lem}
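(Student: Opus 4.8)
The plan is to compute $R_{u,v}(t_0)$ directly from a well-chosen path, exploiting that $t_0$ is multiplicative: since $t_0$ vanishes on $C$, $t_0(1_B)=1$, and $B=C+\C 1_B$, we have $t_0=\pi$ as functionals on $B$. First I would note that $R_{u,v}(\tau)={\rm Det}(uv^*)(\tau)$ for every $\tau\in T(B)$: if $\{w(t):t\in[0,1]\}$ is a piecewise-smooth path of unitaries in $B$ with $w(0)=u$ and $w(1)=v$ (such paths exist since $uv^*\in U_0(B)$), then $\{z(t):=w(t)v^*\}$ joins $uv^*$ to $1_B$ and $\tfrac{dz}{dt}z(t)^*=\tfrac{dw}{dt}v^*vw(t)^*=\tfrac{dw}{dt}w(t)^*$, so the two integrands coincide; thus it suffices to arrange that a suitable path from $uv^*$ to $1_B$ makes the integral vanish against $t_0$.

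Next I would produce a path $\{z(t)\}$ from $uv^*$ to $1_B$ that lies entirely inside $1_B+C$. Write $uv^*=\prod_{j=1}^k\exp(ih_j)$ with each $h_j=h_j^*\in B$ (possible because $uv^*\in U_0(B)$), set $\theta_j:=\pi(h_j)\in\R$ and $h_j':=h_j-\theta_j 1_B\in C$. The hypothesis $\pi(u)=\pi(v)$ gives $\pi(uv^*)=e^{i\sum_j\theta_j}=1$, hence $\sum_j\theta_j\in2\pi\Z$, and therefore $\prod_{j=1}^k\exp(ih_j')=e^{-i\sum_j\theta_j}\,uv^*=uv^*$. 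Taking the usual piecewise-smooth path that contracts the factors $\exp(ih_j')$ to $1_B$ one at a time, and using that $1_B+C$ is closed under multiplication, we obtain a piecewise-smooth path $\{z(t)\}\subset 1_B+C$ with $z(0)=uv^*$ and $z(1)=1_B$. Writing $z(t)=1_B+c(t)$ with $c(t)\in C$, we get $\tfrac{dz}{dt}z(t)^*=c'(t)+c'(t)c(t)^*\in C$, so, since $t_0$ annihilates $C$,
\[
R_{u,v}(t_0)={\rm Det}(uv^*)(t_0)=\int_0^1 t_0\!\big(\tfrac{dz}{dt}z(t)^*\big)\,dt=0.
\]

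Since $R_{u,v}$ is independent of the defining path up to elements of $\rho_B(K_0(B))$ (Definition \ref{Ddet}), replacing the path by the one built above shows that we may assume $R_{u,v}(t_0)=0$, as claimed. I do not anticipate a genuine obstacle here; the only point needing care is the elementary bookkeeping that replaces the exponents $h_j$ by elements $h_j'\in C$ without changing the product $uv^*$, which is precisely where $\pi(u)=\pi(v)$ is used. (Separability and stable rank one of the ambient algebra play no role in this particular statement.)
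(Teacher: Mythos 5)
Your proof is correct and follows essentially the same route as the paper's: decompose $uv^*\in U_0(B)$ as a finite product of exponentials, use $\pi(uv^*)=1$ (from $\pi(u)=\pi(v)$) to see that the scalar parts of the exponents sum to an integer multiple of $2\pi$, shift the exponents accordingly, and read off that the determinant integral vanishes against $t_0$. The only cosmetic difference is the normalization: the paper shifts just the first exponent $a_1\mapsto a_1-2m$ and then invokes $\sum_j\pi(h_j)=0$ to kill the integral, whereas you shift every exponent $h_j\mapsto h_j-\pi(h_j)1_B$ so that the whole path lies in $1_B+C$ and the integrand is visibly annihilated by $t_0$ at every instant; both observations amount to the same computation, so this counts as the same approach rather than a genuinely different one.
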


\begin{proof}
We may write
$$
uv^*=\prod_{j=1}^k \exp (\pi i a_j)
$$
where $a_j\in B_{s.a.}.$
Since $\pi(uv^*)=1,$
$$
\sum_{j=1}^k \pi(a_j)=2m
$$
for some integer $m.$
Define $h_1=a_1-2 m,$ $h_j=a_j,$ $j=2,3,...,k.$
Define
$$
U(t)=\prod_{j=1}^k \exp(\pi i (1-t)h_j)v\,\,\,\,t\in [0,1].
$$
Then $U(t)$ is a continuous piecewise smooth path with $U(0)=u$ and $U(1)=v.$
Moreover,
$$
\sum_{j=1}^k \pi(h_j)=0.
$$
It follows that
$$
{1\over{2\pi i}}\int_0^1 t_0({d U(t)\over{dt}}U(t)^*) dt=0.
$$

\end{proof}

The following is a non-unital version for a special case of Lemma 3.5 of \cite{Lnexp}.

\begin{lem}\label{LN51}
Let $\ep>0$ and let $\Delta: (0,1)\to (0,1)$ be a non-decreasing function.
There is $\dt>0,$ $\eta>0,$ $\sigma>0$ and there is a finite subset
${\cal G}\subset C(\T)_{s.a.}$ satisfying the following:
For any unital separable simple \CA\, $A$ with $TR(A)=0,$
any non-unital hereditary \SCA\, $C\subset A\otimes {\cal K}$ with continuous scale,  any pair of unitaries $u, v\in {\tilde C}$ such that $sp(u)=\T$ and
$[u]=[v]$ in $K_1(\tilde{C}),$ $\pi(u)=\pi(v),$
$$
\mu_{\tau\circ \phi}(I_a)\ge \Delta(a)\tforal \tau\in T(C)
$$
for all intervals $I_a$ with length at least $\eta,$ where
$\phi: C(\T)\to \tilde{C}$
is the \hm\, defined by $\phi(f)=f(u)$ for all $f\in C(\T),$
\beq\label{LN51-2}
|\tau(g(u))-\tau(g(v))|<\dt \tforal \tau\in T(C)
\eneq
and for all $g\in {\cal G},$
for any $a\in Aff(T(C))$ with $a-R_{u,v}|_{T(C)}\in \rho_C(K_0(C))$ and $\|a\|<\sigma$  and $y\in K_1({\tilde C}),$ there is a unitary
$w\in \tilde{C}$ such that
\beq\label{LN51-4}
[w]=y,\,\,\,
\|u-w^*vw\|<\ep\andeqn\\
{1\over{2\pi i}}\tau(\log(u^*w^*vw))=a(\tau)\tforal \tau\in T(C).
\eneq
\end{lem}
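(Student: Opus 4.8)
The plan is to reduce the statement, via the corner technology developed in this section, to the \emph{unital} case, that is, to (the special case of) Lemma 3.5 of \cite{Lnexp} applied to a large unital corner $e_kCe_k$ of $C$.

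First I would normalize. Replacing $u$ and $v$ by suitable scalar multiples we may assume $\pi(u)=\pi(v)=1$; scaling by $\lambda\in\T$ affects neither ${\rm sp}(u)=\T$, nor $[u]$, nor the measure and trace conditions, nor $R_{u,v}|_{T(C)}$. Next, by \ref{Ruv} we may assume $R_{u,v}(t_0)=0$, so that all the de la Harpe--Skandalis data are governed by $T(C)$ and the extra trace on $\tilde C$ plays no role. Fix an approximate identity $\{e_n\}$ of $C$ consisting of projections. By \ref{fullsp} applied to $u$, and by the analogous continuous-scale argument applied to $v$ (followed by a small perturbation inside the real-rank-zero algebra $e_kCe_k$ to restore a full spectrum), I would obtain, for $k$ large, unitaries $u',v'\in e_kCe_k$ with ${\rm sp}(u')={\rm sp}(v')=\T$, with $\|u-((1-e_k)+u')\|$ and $\|v-((1-e_k)+v')\|$ as small as desired, and with $\sup_{\tau\in T(C)}\tau(1-e_k)$ as small as desired.

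Now $A_0:=e_kCe_k$ is a unital, separable, simple, infinite-dimensional \CA\ with $TR(A_0)=0$; the inclusion $A_0\hookrightarrow\tilde C$ induces an isomorphism $K_1(A_0)\cong K_1(\tilde C)$, and $\tau\mapsto\tau(\cdot)/\tau(e_k)$ is an affine homeomorphism $T(C)\cong T(A_0)$ with $\tau(e_k)\to 1$ uniformly as $k\to\infty$; the induced map $\Phi\colon\Aff(T(A_0))\to\Aff(T(C))$, $\Phi(b)(\tau)=\tau(e_k)\,b(\tau/\tau(e_k))$, carries $\rho_{A_0}(K_0(A_0))$ onto $\rho_C(K_0(C))$. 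Using these identifications, together with Lemma 3.4 of \cite{LnHAH} (to pass the measure lower bound for $u$ to one for $u'$, after shrinking $\Delta$) and the additivity of $R$ along concatenations, I would transfer all the hypotheses on $(u,v)$ to hypotheses on $(u',v')$ and choose a target $a'\in\Aff(T(A_0))$ of norm $<2\sigma$ and a class $y'\in K_1(A_0)$ corresponding to $a$ and $y$, arranged so that $a'-R_{u',v'}\in\rho_{A_0}(K_0(A_0))$; here the small honest determinant-corrections coming from the two perturbations $u\mapsto(1-e_k)+u'$ and $v\mapsto(1-e_k)+v'$ must be absorbed into the choice of $a'$, and one uses $\tau(1-e_k)$ tiny and $R_{u,v}(t_0)=0$ to keep the $\rho$-compatibility intact under the renormalization.

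Applying (the special case of) Lemma 3.5 of \cite{Lnexp} to $A_0$, $u'$, $v'$, $a'$ and $y'$ yields $w'\in U(A_0)$ with $[w']=y'$, $\|u'-(w')^{*}v'w'\|<\ep/2$, and $\frac{1}{2\pi i}\tau'(\log((u')^{*}(w')^{*}v'w'))=a'(\tau')$ for all $\tau'\in T(A_0)$. Setting $w:=(1-e_k)+w'\in U(\tilde C)$, one gets $[w]=y$ in $K_1(\tilde C)$; since $u^{*}w^{*}vw$ equals $(1-e_k)+(u')^{*}(w')^{*}v'w'$ up to the (small) perturbation errors, $\|u-w^{*}vw\|<\ep$, and since the principal logarithm is block-diagonal and annihilates the $(1-e_k)$-summand, $\frac{1}{2\pi i}\tau(\log(u^{*}w^{*}vw))=a(\tau)$ for all $\tau\in T(C)$, by the very choice of $a'$. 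I expect the main obstacle to be the transfer step: first, arranging ${\rm sp}(v')=\T$, since the hypotheses supply only ${\rm sp}(u)=\T$, which forces a perturbation of $v$ inside the corner that preserves its $K_1$-class and the trace estimates; and second, the bookkeeping that matches $a-R_{u,v}\in\rho_C(K_0(C))$ with the corresponding condition in $A_0$ when $\tau(e_k)\ne1$, which is exactly where the freedom to take $\tau(1-e_k)$ arbitrarily small, and the normalization provided by \ref{Ruv}, come in. Once one is inside the unital corner, the unital Lemma 3.5 of \cite{Lnexp} --- itself resting on the classification machinery and on exponential-length estimates in the spirit of \ref{expR0} --- performs the substantive construction, including the exact realization of the determinant and of the prescribed $K_1$-class. (Alternatively, one could re-run the proof of Lemma 3.5 of \cite{Lnexp} directly in $\tilde C$, using the non-unital uniqueness result \ref{Appu} and \ref{homt} in place of their unital counterparts; the corner reduction seems cleaner.)
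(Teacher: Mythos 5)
Your proposal is a genuinely different route from the paper's.  The paper does \emph{not} invoke the full unital Lemma~3.5 of \cite{Lnexp}; it re-assembles that lemma's argument from smaller pieces.  Specifically, the paper first applies the non-unital uniqueness result (Lemma~\ref{Appu}) to produce a conjugating unitary $z\in\tilde C$ with $\|u-z^*vz\|$ small, sets $b=\tfrac{1}{2\pi i}\log(u^*z^*vz)$, and records that $\hat b-R_{u,v}\in\rho_C(K_0(C))$ (Lemma~3.2 of \cite{Lnexp}).  Only then does it cut down to a corner $e_NCe_N$, and it uses Lemma~3.1 of \cite{Lnexp} (the bott-existence lemma, not the uniqueness Lemma~3.5) to build a small \emph{correction} unitary $z_1\in e_NCe_N$ whose bott class realizes $a(\tau)-\tau(b)$ and whose $K_1$ class realizes $y-[z]$.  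The final $w=z\,((1-e_N)+z_1)$ and the Exel formula then produce the exact determinant identity.  You instead propose to cut \emph{both} $u$ and $v$ down to the corner at the outset, transfer all hypotheses across the affine homeomorphism $T(C)\cong T(e_kCe_k)$, and invoke the unital Lemma~3.5 of \cite{Lnexp} wholesale.  This is cleaner conceptually (no reassembly, no separate use of \ref{Appu}), but it buys that economy at the price of delicate bookkeeping in the cut-down step, whereas the paper's modular route does only a \emph{bott} correction in the corner and keeps the approximate conjugation entirely in $\tilde C$, giving more transparent control of $\sigma$.

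The crux of your transfer step is the exact cancellation of the correction terms, which you gesture at but do not spell out, and which is where a reader would want to see the work.  Write $U=(1-e_k)+u'$, $V=(1-e_k)+v'$, $u=Uu_0$, $v=Vv_0$ with $u_0,v_0$ close to~$1$, and $w=(1-e_k)+w'$.  Then
$$u^*w^*vw \;=\; u_0^*\,\bigl[(1-e_k)+(u')^*(w')^*v'w'\bigr]\,(w^*v_0w),$$
so after the usual additivity of $\tau(\log(\cdot))$ for near-identity factors,
$$\tfrac{1}{2\pi i}\tau(\log(u^*w^*vw))=\Phi(a')(\tau)-\tfrac{1}{2\pi i}\tau(\log u_0)+\tfrac{1}{2\pi i}\tau(\log v_0),$$
forcing the choice $\Phi(a')=a+\tfrac{1}{2\pi i}(\widehat{\log u_0}-\widehat{\log v_0})$.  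One must then check that $a'-R_{u',v'}\in\rho_{e_kCe_k}(K_0)$.  Using $R_{u,v}=R_{u,U}+R_{U,V}+R_{V,v}$ with $\Phi(R_{u',v'})=R_{U,V}$ and the specific paths $t\mapsto U\exp(t\log u_0)$ and $t\mapsto V\exp(t\log v_0)$ so that $R_{u,U}=-\tfrac{1}{2\pi i}\widehat{\log u_0}$, $R_{V,v}=\tfrac{1}{2\pi i}\widehat{\log v_0}$, the extra summands cancel \emph{exactly} with the $\log u_0,\log v_0$ corrections, leaving $a'-R_{u',v'}$ in the image of $\rho$ as needed.  Two further points worth flagging: the norm bound $\|a'\|<\sigma'$ requires $\tau(1-e_k)$ small enough that the renormalization factor $1/\tau(e_k)$ is close to~$1$ \emph{and} that $\|\log u_0\|,\|\log v_0\|$ are absorbed, so $\sigma$ in the statement should be taken roughly $\sigma'/4$; and the worry about arranging $\mathrm{sp}(v')=\T$ is a non-issue, since only $\mathrm{sp}(u)=\T$ is hypothesized and Lemma~3.5 of \cite{Lnexp} correspondingly needs only $\mathrm{sp}(u')=\T$ (which \ref{fullsp} provides).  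With these points filled in, your corner reduction is a valid alternative proof.
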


\begin{proof}
Note that, since ${\rm sp}(u)=\T,$ $A$ has infinite dimension. Since $TR(A)=0,$ $CU(A)=U_0(A)$ and
$\overline{\rho_A(K_0(A)))}=\Aff(T(A)).$ We will use these facts in the following proof.
Without loss of generality, we may assume that
$\pi(u)=\pi(v)=1.$  By \ref{Ruv}, we may assume that
\beq\label{LN51-4+}
R_{u,v}(t_0)=0,
\eneq
where $t_0$ is the tracial state of $B$ that vanishes on $C.$

Let $\ep>0$ and $\Delta$ be  given.
Choose $\ep/2>\theta>0$ such that,
$\log(u_1),$ $\log(u_2)$ and $\log(u_1u_2)$ are well defined and
\beq\label{LN51-n1}
\tau(\log(u_1u_2))=
\tau(\log(u_1))+ \tau(\log(u_2))
\eneq
for every tracial state $\tau$ and
for any unitaries
$u_1, u_2$
such  that
$$
\|u_j-1\|<\theta,\,\,\, j=1,2.
$$
We may choose even smaller $\theta$ such that
\beq\label{LN51-n2}
{\rm bott}_1(u_1, v_1)={\rm bott}_1(u_2, v_1)
\eneq
provided that $\|[u_1,v_1]\|<\theta\andeqn \|u_1-u_2\|<\theta.$
Let $\dt'>0$ (in place of $\dt$) be required by Lemma 3.1 of \cite{Lnexp}  for $\theta/2$ (in place of $\ep$).
Put $\sigma=\dt'/2.$
Let $1/2>\dt>0$ and $\eta$ be required by \ref{Appu} for $\min\{\sigma, \theta/2, 1\}$ (in place of $\ep$) and $\Delta.$
Suppose  $u$ and $v$ satisfy the
 assumption for the above $\dt,$ $\eta$ and
$\sigma.$
Then, by \ref{Appu}, there exists a unitary $z\in U(\tilde{C})$ such that
\beq\label{LN51-5}
\|u-z^*vz\|<\min\{\sigma, \theta/2, 1\}.
\eneq
Let $b={1\over{2\pi i}}\log(u^*z^*vz).$ Then
$\|b\|<\min\{\theta, \sigma, 1\}.$  By Lemma 3.2 of \cite{Lnexp},
$\hat{b}-R_{u,v}\in \rho_{C}(K_0(C)).$  Note that, by (\ref{LN51-5}),
$\|\log(u^*z^*vz)\|<\pi.$ Since $\pi(u)=\pi(v)=1,$  $\pi(u^*z^*vz)=1.$ It follows that
$\pi(\log(u^*z^*vz))=0.$   In particular, $t_0(b)=0.$

Let $a\in Aff(T(C))$ be such that
$\|a\|<\sigma$ and $a-R_{u,v}|_{T(C)}\in \rho_C(K_0(C))$ as given by the lemma.
It follows that $a-\hat{b}\in \rho_C(K_0(C)).$ Moreover, $\|a-\hat{b}\|<2\sigma<\dt'.$
Let $p, q\in M_k(C)$ be projections such that
\beq\label{LN51-5+1}
\tau(p-q)=a(\tau)-\tau(b)\tforal \tau\in T(C).
\eneq
Let $\{e_n\}$ be an approximate identity for $C$ consisting of projections. Choose an integer $N\ge 1$  and unitaries
$u_1, v_1\in e_NAe_N=e_NCe_N$ such that
\beq\label{LN51-5+2}
&&\hspace{-0.8in}\|u-((1-e_N)+u_1)\|<\min\{\dt/4, \theta/2\},\,\,\,\|v-((1-e_N)+v_1)\|<
\min\{\dt/4,\theta/2\}\\\label{LN51-5+3}
&&\andeqn \tau(1-e_N)<\eta\tforal  \tau\in T(C).
\eneq
By \ref{fullsp}, we may also assume that ${\rm sp}(u_1)=\T.$
By (\ref{LN51-5+1}) and (\ref{LN51-5+3}),
\beq\label{LN51-5+4}
|\tau(p-q)|<\sigma\tforal \tau\in T(e_NAe_N).
\eneq
Note that $K_1({\tilde C})=K_1(C)=K_1(eNAe_N).$
It follows from Lemma 3.1 of \cite{Lnexp}  that there exists a unitary $z_1\in e_NAe_N$ such that
\beq\label{LN51-6}
[z_1]=y-[z],\,\,\, \|[u_1, z_1]\|<\theta/2\andeqn {\rm bott}_1(u_1,z_1)(\tau)=\tau(p-q)
\eneq
for all $\tau\in T(e_NAe_N).$
Put $z_2=(1-e_N)+z_1,$ $w=zz_2$ and $u_1'=(1-e_N)+u_1.$
It follows that
\beq\label{LN51-6+0}
[w]=y\andeqn \|u-w^*vw\|<\theta<\ep
\eneq
\beq\label{LN51-6+1}
{\rm bott}_1(u_1',z_2)(\tau)=a(\tau)-\tau(b)\tforal \tau\in T(C).
\eneq
Put $u_0=u((1-e_N)+u_1^*)$ and $v_0=v({\bar \lambda}(1-e_N)+v_1^*).$
So
\beq\label{LN51-7}
\|u_0-1\|<\min\{\dt/4, \theta/2\}\andeqn \|v_0-1\|<\min\{\dt/4,\theta/2\}.
\eneq
By the choice of $\theta,$ we have
\beq\label{LN51-7+}
{\rm bott}_1(u, z_2)={\rm bott}_1(u_1',z_2).
\eneq
We compute that (using (\ref{LN51-n1}), (\ref{LN51-n2}), (\ref{LN51-7+}) and the Exel formula (see Lemma 3.5 of \cite{Lninv}))
\beq\label{LN51-8}
{1\over{2\pi i}}\tau(\log(u^*w^*vw))&=&
{1\over{2\pi i}}\tau(\log(u^*z_2^*z^*vzz_2))\\
&=&{1\over{2\pi i}}\tau(\log(u^* z_2^* (u  u^*) z^* v z z_2)\\
&=& {1\over{2\pi i}}\tau(\log((z_2u^*z_2^*u)( u^*z^*vz))\\
&=&{1\over{2\pi i}}(\tau(\log(z_2u^*z_2^*u))+\tau(\log(u^*z^*vz)))\\
&=&{1\over{2\pi i}}(\tau(\log(u^*z_2^*uz_2))+\tau(b)\\
&=&{\rm bott}_1(u,z_2)(\tau)+\tau(b)\\
&=& a(\tau)-\tau(b)+\tau(b)=a(\tau)\,\,\,\rforal \tau\in T(C).
\eneq
\end{proof}

The above proof also contains the following

\begin{lem}\label{botext} ({\rm see 3.1 of \cite{Lnexp}})
Let $\ep>0.$ There exists $\dt>0$ satisfying the following:
Suppose that $A$ is a unital separable simple \CA\, with $TR(A)=0,$ $C\subset A\otimes {\cal K}$ is a non-unital hereditary
\SCA\, with continuous scale and suppose
that $u\in U({\tilde{C}})$ with ${\rm sp}(u)=\T.$ Then, for any
$x\in K_0(C)$  with $\|\rho_C(x)\|<\dt$ and any $y\in K_1(C),$  there exists a unitary
$v\in A$ such that
\beq\label{botext-1}
[v]=y,\,\,\,\|[u,\, v]\|<\ep\andeqn {\rm bott}_1(u,v)=x.
\eneq
\end{lem}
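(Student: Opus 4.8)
The remark preceding the statement is the right guide: the argument is essentially already carried out inside the proof of Lemma \ref{LN51} (around (\ref{LN51-5+2})--(\ref{LN51-6}) and (\ref{LN51-7+})), and the plan is just to isolate it. I would reduce the problem to the unital case — Lemma 3.1 of \cite{Lnexp} — applied inside a large corner $e_NCe_N=e_N(A\otimes {\cal K})e_N$, and then transport the unitary obtained there back into ${\tilde C}$ by adding $1-e_N$. To fix the constants: let $\dt'>0$ be the number provided by Lemma 3.1 of \cite{Lnexp} for $\ep/2$ in place of $\ep$ (the unital statement: for a unital separable simple \CA\ $A_0$ with $TR(A_0)=0$, a unitary $u_0\in A_0$ with ${\rm sp}(u_0)=\T$, any $x_0\in K_0(A_0)$ with $\|\rho_{A_0}(x_0)\|<\dt'$, and any $y_0\in K_1(A_0)$, there is a unitary $v_0\in A_0$ with $[v_0]=y_0$, $\|[u_0,v_0]\|<\ep/2$ and ${\rm bott}_1(u_0,v_0)=x_0$). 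Choose also $\theta\in(0,\ep/4)$ small enough that ${\rm bott}_1(w_1,w)={\rm bott}_1(w_2,w)$ whenever these are defined and $\|[w_j,w]\|<\theta$, $\|w_1-w_2\|<\theta$ (the same continuity of ${\rm bott}_1$ used in (\ref{LN51-n2}) and (\ref{LN51-7+})). Set $\dt=\dt'/2$.

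Now suppose $A$, $C$ and $u$ are as in the hypothesis and fix an approximate identity $\{e_n\}$ of $C$ consisting of projections. Since $C$ has continuous scale, $T(C)$ is compact, and by Dini's theorem $\sup_{\tau\in T(C)}\tau(1-e_N)\to 0$ as $N\to\infty$; moreover $C=\overline{\bigcup_N e_NCe_N}$, so every class in $K_i(C)$ comes from $K_i(e_NCe_N)$ once $N$ is large. I would then choose $N$ large enough that: (i) $x$ and $y$ are the images of classes $x_1\in K_0(e_NCe_N)$ and $y_1\in K_1(e_NCe_N)$; (ii) $\sup_{\tau\in T(C)}\tau(1-e_N)<1/2$, so that $\|\rho_{e_NCe_N}(x_1)\|<2\|\rho_C(x)\|<2\dt=\dt'$ (each trace on $e_NCe_N$ being the normalization of the restriction of a trace on $C$, and $\tau(e_N)>1/2$); and (iii), by Corollary \ref{fullsp}, there is a unitary $u_1\in e_NCe_N$ with ${\rm sp}(u_1)=\T$ and $\|u-((1-e_N)+u_1)\|<\theta/2$. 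Write $u_1'=(1-e_N)+u_1$.

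Applying Lemma 3.1 of \cite{Lnexp} inside the unital separable simple \CA\ $e_NCe_N$, which has $TR=0$ as a corner of $A\otimes {\cal K}$, to $u_1$, $x_1$ and $y_1$, I obtain a unitary $v_1\in e_NCe_N$ with $[v_1]=y_1$, $\|[u_1,v_1]\|<\ep/2$ and ${\rm bott}_1(u_1,v_1)=x_1$. Put $v=(1-e_N)+v_1$, a unitary in ${\tilde C}$ (indeed $v-1\in C$, so $\pi(v)=1$). Then $[v]=y$ in $K_1({\tilde C})=K_1(C)$ under the identifications above; $\|[u_1',v]\|=\|[u_1,v_1]\|<\ep/2$, so $\|[u,v]\|<\ep/2+2(\theta/2)<\ep$; and ${\rm bott}_1(u_1',v)={\rm bott}_1(u_1,v_1)=x_1=x$, whence, by the choice of $\theta$ (since $\|u-u_1'\|<\theta$ and all the commutators involved are $<\theta$), ${\rm bott}_1(u,v)={\rm bott}_1(u_1',v)=x$. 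So $v$ is the required unitary.

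The only step that calls for genuine care is (ii): transferring both the $K_0$–trace-norm bound and the $K$-theory classes between $C$ (and ${\tilde C}$) and the corner $e_NCe_N$. This is precisely the bookkeeping already done in the proof of Lemma \ref{LN51} (compare (\ref{LN51-5+1})--(\ref{LN51-5+4})), and it works because continuous scale forces $\tau(1-e_N)$ to be uniformly small over the compact set $T(C)$. Everything else is a direct quotation of the unital result 3.1 of \cite{Lnexp}, so I do not expect any obstacle beyond this localization.
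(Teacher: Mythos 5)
Your proposal is correct and follows essentially the same route as the paper: the paper explicitly notes that the proof of Lemma \ref{botext} is contained in the proof of Lemma \ref{LN51}, where the argument is precisely to pass to a corner $e_NCe_N$ with $\tau(1-e_N)$ uniformly small, replace $u$ by a nearby unitary $(1-e_N)+u_1$ with ${\rm sp}(u_1)=\T$ via Corollary \ref{fullsp}, invoke Lemma 3.1 of \cite{Lnexp} inside $e_NCe_N$, and transport back by adding $1-e_N$, using the norm-continuity of ${\rm bott}_1$. Your bookkeeping — compactness of $T(C)$ from continuous scale, the identification $K_i(e_NCe_N)\cong K_i(C)$ for the full corner, and the factor-of-$2$ loss in the trace-pairing norm from $\tau(e_N)>1/2$ — matches the inequalities (\ref{LN51-5+1})–(\ref{LN51-5+4}) and (\ref{LN51-7+}) in the paper's proof.
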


The next result is a special case of \cite{TomsWinter} Corollary 3.1.
We provide a short proof for the convenience of the reader.

\begin{lem}\label{herd}
Let $A$ be a unital separable simple \CA\, which is ${\cal Z}$-stable and
let $C\subset A\otimes {\cal K}$ be a hereditary \SCA\
with continuous scale.
Then $C\otimes {\cal Z}\cong C.$
\end{lem}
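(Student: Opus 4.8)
The plan is to reduce the statement $C \otimes \mathcal{Z} \cong C$ to a known $\mathcal{Z}$-stability absorption result for hereditary subalgebras, exploiting that $C$ has continuous scale and hence is the hereditary subalgebra $\overline{aCa}$ associated to some positive element of the multiplier algebra structure. The key observation is that $C$ is itself a $\sigma$-unital simple C*-algebra, and $A \otimes \mathcal{K}$ is $\mathcal{Z}$-stable because $A$ is and $\mathcal{K}$ is nuclear: $(A \otimes \mathcal{K}) \otimes \mathcal{Z} \cong (A \otimes \mathcal{Z}) \otimes \mathcal{K} \cong A \otimes \mathcal{K}$. So the real content is passing $\mathcal{Z}$-stability from $A \otimes \mathcal{K}$ to the hereditary subalgebra $C$.

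\medskip

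First I would recall that Toms--Winter and Hirshberg--Rørdam--Winter established that $\mathcal{Z}$-stability passes to hereditary subalgebras (and more generally is preserved under stable isomorphism and taking ideals) for separable C*-algebras: if $D$ is separable and $\mathcal{Z}$-stable and $C \subset D$ is a hereditary subalgebra, then $C$ is $\mathcal{Z}$-stable. Concretely, one way to see this is that a separable hereditary subalgebra $C$ of a $\sigma$-unital $\mathcal{Z}$-stable algebra $D$ is stably isomorphic to an ideal of $D$ (here $C$ is full in $A \otimes \mathcal{K}$ by simplicity, so in fact $C \otimes \mathcal{K} \cong (A \otimes \mathcal{K}) \otimes \mathcal{K} \cong A \otimes \mathcal{K}$ by Brown's theorem), and $\mathcal{Z}$-stability is preserved under stable isomorphism for separable algebras. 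Since $A$ is separable, $A \otimes \mathcal{K}$ is separable, hence so is $C$, and $C \otimes \mathcal{K} \cong A \otimes \mathcal{K}$ is $\mathcal{Z}$-stable; then Toms--Winter's result that $\mathcal{Z}$-stability descends from $C \otimes \mathcal{K}$ to $C$ (for separable $C$) yields $C \otimes \mathcal{Z} \cong C$.

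\medskip

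So the proof would run: (1) note $A \otimes \mathcal{K} \cong A \otimes \mathcal{Z} \otimes \mathcal{K}$ using associativity and commutativity of the minimal tensor product together with the hypothesis that $A$ is $\mathcal{Z}$-stable; (2) since $A$ is simple and $C \subset A \otimes \mathcal{K}$ is a nonzero hereditary \SCA, $C$ is full, so by Brown's stabilization theorem $C \otimes \mathcal{K} \cong (A \otimes \mathcal{K}) \otimes \mathcal{K} \cong A \otimes \mathcal{K}$, which is therefore $\mathcal{Z}$-stable; (3) invoke the permanence result (Toms--Winter \cite{TomsWinter}, or Hirshberg--Rørdam--Winter) that for a separable C*-algebra $C$, if $C \otimes \mathcal{K}$ is $\mathcal{Z}$-stable then so is $C$; conclude $C \cong C \otimes \mathcal{Z}$.

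\medskip

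\textbf{The main obstacle} is making sure the permanence theorem I invoke is stated in exactly the generality needed — in particular for non-unital, possibly stably projectionless separable algebras $C$ — and that the continuous-scale hypothesis, while not strictly needed for this particular lemma, is the reason $C$ arises naturally in the applications. If one wanted a self-contained argument avoiding black-box citation, the harder route would be to directly construct an approximately central sequence of unital embeddings $\mathcal{Z} \to C_\infty \cap C'$ (or the relevant dimension-drop algebra maps) out of the corresponding sequence for $A \otimes \mathcal{K}$, cutting down by $C$; this requires the approximately-central embedding of $\mathcal{Z}$ into the sequence algebra to be arranged to fall inside the hereditary subalgebra, which is routine but technical. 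I would prefer the citation route, noting only that $C$ is separable (as $A$ is) and full in $A\otimes\mathcal K$, so all the hypotheses of the stable-isomorphism/hereditary-subalgebra permanence are met.
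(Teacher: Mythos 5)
Your proposal is correct, but it takes a genuinely different route from the paper's. The paper argues directly inside the Cuntz semigroup: writing $C=\overline{a(A\otimes\K)a}$, it uses the continuous-scale hypothesis to conclude $d_\tau(a)$ is bounded and hence (by strict comparison) $a\lesssim E_N = 1_{M_N(A)}$ for some $N$, allowing one to view $a\in M_N(A)$; then, since $M_N(A)\otimes\mathcal Z\cong M_N(A)$ and the unital embedding $x\mapsto x\otimes 1_{\mathcal Z}$ preserves Cuntz classes, $[a]=[a\otimes 1_{\mathcal Z}]$, and stable rank one converts this Cuntz equivalence into an isomorphism of hereditary subalgebras $C\cong C\otimes\mathcal Z$. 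Your argument instead invokes the Toms--Winter permanence theorem that $\mathcal D$-absorption passes to hereditary subalgebras (equivalently, to $C$ from $C\otimes\K$) for separable $C$; since $A\otimes\K$ is separable and $\mathcal Z$-stable and $C$ is hereditary in it, $C$ is $\mathcal Z$-stable. Your route is shorter, cites a heavier external result, and notably does not use the continuous-scale hypothesis at all — so it actually establishes the stronger statement that every hereditary subalgebra of $A\otimes\K$ is $\mathcal Z$-stable. The paper's route stays within the Cuntz-semigroup toolkit already deployed elsewhere in the paper and uses continuous scale to squeeze $a$ under a finite matrix amplification. (One small remark: in your step (2), the detour through Brown's theorem to get $C\otimes\K\cong A\otimes\K$ is not actually needed — the hereditary-subalgebra form of the Toms--Winter permanence result applies to $C\subset A\otimes\K$ directly.)
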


\begin{proof}
Let $C=\overline{a(A\otimes {\cal K})a}$ for some positive element $a\in A\otimes {\cal K}.$ By the assumption, we may assume
that
$$
a\lesssim E_N,
$$
where $E_N={\rm id }_{M_N(A)}.$
Since $A$ is ${\cal Z}$-stable, $A$ has stable rank one. We may assume that $a\in M_N(A).$
So $C\otimes {\cal Z}$ is a hereditary \SCA\, of $M_N(A)\otimes {\cal Z}\cong M_N(A).$
There is an isomorphism $\phi: M_n(A)\otimes {\cal Z}\to M_n(A)$ 
such that $\phi(c\otimes 1_{\cal Z})$ is approximately unitarily equivalent  to $c$ for all $c\in M_n(A).$ 
Let $B_1=\overline{b(A\otimes M_N)b}=C\otimes {\cal Z},$ where $b=a\otimes 1_{\cal Z}.$  Then
$$
[a]=[\phi(b)]
$$
in the Cuntz semigroup.
Therefore $C\otimes {\cal Z}\cong C.$

\end{proof}

\begin{lem}\label{Mapp}
Let $\ep > 0$ and  let $\Delta: (0, 1) \to  (0, 1)$ be a non-decreasing map. There exist $\eta>0,\, \dt > 0$ and
a finite subset ${\cal G}\in C(\T)_{s.a.}$  satisfying
the following:
Suppose that $A$ is a ${\cal Z}$-stable unital separable simple \CA\, such that $TR(A\otimes Q)=0,$
$C$ is a non-unital hereditary \SCA\, of $A\otimes {\cal K}$ with continuous scale,  $B={\tilde C},$
and suppose that $u, v\in U(B)$ are two unitaries such that ${\rm sp}(u)=\T,$
\beq\label{Mp-1}
\mu_{\tau\circ \phi}(I_a)\ge \Delta(a)
  \tforal \tau\in T(C)
 \eneq
 and for all arcs $I_a$ with length at least $a\ge \eta,$
 where $\phi: C(\T)\to B$ is defined by $\phi(f)=f(u)$ for all $f\in C(\T)$ and
 \beq\label{Mp-2}
&& |\tau(g(u))-\tau(g(v))|<\dt\tforal g \in {\cal G},
\tforal \tau\in T(C),\\\nonumber
&&{[u]}={[v]}\,\,\, {\rm in} \,\,\, K_1(C),\,
uv^*\in CU(B)
\andeqn \pi(u)=\pi(v),
\eneq
where $\pi: B\to B/C=\C$ is the quotient map.
Then there exists a unitary  $w\in U({ B\otimes {\cal Z}})$ such that
\beq\label{Mp-3}
\|w^*(u\otimes 1)w-(v\otimes 1)\|<\ep.
\eneq
\end{lem}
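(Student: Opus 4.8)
The plan is to run a Jiang--Su zig-zag: solve the approximate unitary equivalence problem at the two UHF endpoints of one copy of ${\cal Z}_{\p,\q}$ and then interpolate. Fix relatively prime supernatural numbers $\p,\q$ with $M_\p$ and $M_\q$ (hence $M_{\p\q}\cong M_\p\otimes M_\q$) infinite dimensional, and recall from Theorem 3.4 of \cite{RorWinter} that ${\cal Z}=\lim({\cal Z}_{\p,\q},\phi)$ with $\phi$ unital and trace-collapsing. Since $\tr(A\otimes Q)=0$ we have $A\in{\cal A}_0$, so by \cite{LS} (see the remark after \ref{DTR=0A0Z}) $\tr(A\otimes M_\p)=\tr(A\otimes M_\q)=\tr(A\otimes M_{\p\q})=0$; writing $C=\overline{a(A\otimes{\cal K})a}$, the algebras $C\otimes M_\p$, $C\otimes M_\q$, $C\otimes M_{\p\q}$ are non-unital hereditary \SCA s of $(A\otimes M_\p)\otimes{\cal K}$, etc., still with continuous scale (the scale function is unchanged under $T(C\otimes M_\p)=T(C\otimes M_\q)=T(C\otimes M_{\p\q})=T(C)$), and $A$ has stable rank one. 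Given $\ep$ and $\Delta$, pick a small $\ep_1>0$ (in an appropriate order relative to the constants produced by \ref{LN51}, \ref{homt} and \ref{botext} for parameters comparable to $\ep_1$), feed $(\ep_1,\Delta)$ into \ref{LN51} to get $\dt_1,\eta_1,\sigma_1,{\cal G}_1$, and output $\eta=\eta_1$, $\dt=\dt_1$, ${\cal G}={\cal G}_1$. We may assume $\pi(u)=\pi(v)=1$. Since $B\otimes{\cal Z}=\lim(B\otimes{\cal Z}_{\p,\q},\id_B\otimes\phi)$ and $\id_B\otimes\phi$ is unital (so fixes $u\otimes1$ and $v\otimes1$), it suffices to produce a unitary $\mathbf S\in U(\widetilde{C\otimes{\cal Z}_{\p,\q}})$ with $\|\mathbf S^*(u\otimes1)\mathbf S-(v\otimes1)\|<\ep$ and push it forward along the canonical unital map $C\otimes{\cal Z}_{\p,\q}\to C\otimes{\cal Z}$ into $\widetilde{C\otimes{\cal Z}}\subseteq B\otimes{\cal Z}\subseteq\widetilde{B\otimes{\cal Z}}$. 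Such an $\mathbf S$ (with scalar part $1$) is exactly a continuous path $t\mapsto S(t)$ of unitaries in $\widetilde{C\otimes M_{\p\q}}$ with $\pi(S(t))=1$, $S(0)\in\widetilde{C\otimes M_\p}$ and $S(1)\in\widetilde{C\otimes M_\q}$; so the task is to build such a path with $\|S(t)^*(u\otimes1)S(t)-(v\otimes1)\|<\ep$ throughout.

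\textbf{Endpoint conjugators.} Apply \ref{LN51} inside $\widetilde{C\otimes M_\p}$ (here $\tr(A\otimes M_\p)=0$, $C\otimes M_\p$ has continuous scale, ${\rm sp}(u\otimes1)={\rm sp}(u)=\T$, $[u\otimes1]=[v\otimes1]$ in $K_1$, $\pi(u\otimes1)=\pi(v\otimes1)=1$, and the measure and trace estimates persist under $T(C\otimes M_\p)=T(C)$) with target $K_1$-class $0$ and target affine function $a_0\in\Aff(T(C))$, $\|a_0\|<\sigma_1$, a small representative of the class of $R_{u,v}|_{T(C)}$; such an $a_0$ exists because $uv^*\in CU(B)$ forces $R_{u,v}$ into $\overline{\rho_B(K_0(B))}$. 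This yields $w_\p\in U(\widetilde{C\otimes M_\p})$ with $[w_\p]=0$, $\|u\otimes1-w_\p^*(v\otimes1)w_\p\|<\ep_1$ and $\frac{1}{2\pi i}\tau(\log((u\otimes1)^*w_\p^*(v\otimes1)w_\p))=a_0(\tau)$. Doing the same in $\widetilde{C\otimes M_\q}$ with the \emph{same} $a_0$ gives $w_\q$. After multiplying by scalars assume $\pi(w_\p)=\pi(w_\q)=1$, and put $s_\p=w_\p^*$, $s_\q=w_\q^*$, viewed in $\widetilde{C\otimes M_{\p\q}}$ with trivial $M_\q$-leg, resp.\ trivial $M_\p$-leg. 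Then $s_\p^*(u\otimes1)s_\p=w_\p(u\otimes1)w_\p^*\approx_{\ep_1} v\otimes1$ and likewise for $s_\q$, so each $s_\bullet$ already does the required approximate conjugation; note that only ${\rm sp}(u)=\T$ is used, the orientation being precisely the one \ref{LN51} supplies.

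\textbf{Interpolation and assembly.} Let $Z=s_\q s_\p^*\in U(\widetilde{C\otimes M_{\p\q}})$. Then $Z^*(u\otimes1)Z\approx s_\p(v\otimes1)s_\p^*\approx u\otimes1$, so $Z$ approximately commutes with $u\otimes1$, $\pi(Z)=1$, and $[Z]=[s_\q]-[s_\p]=0$ in $K_1(C\otimes M_{\p\q})$. The decisive point is that one can arrange ${\rm bott}_1(u\otimes1,Z)=0$ in $K_0(C\otimes M_{\p\q})$: by the Exel formula and the common choice of $a_0$ its pairing with every trace is $a_0-a_0=0$, and the remaining (tracially trivial) part is removed by replacing $w_\p,w_\q$ with $w_\p c_\p,\, w_\q c_\q$, where $c_\p\in\widetilde{C\otimes M_\p}$, $c_\q\in\widetilde{C\otimes M_\q}$ are produced by \ref{botext} with trivial $K_1$-class and prescribed small ${\rm bott}_1$-values whose images in $K_0(C\otimes M_{\p\q})$ add up to ${\rm bott}_1(u\otimes1,Z)$; this splitting is available because $\Q_\p+\Q_\q=\Q_{\p\q}$, so $K_0(C\otimes M_{\p\q})$ is the sum of the images of $K_0(C\otimes M_\p)$ and $K_0(C\otimes M_\q)$, and the vanishing of the tracial pairing of that class (together with the trace-collapsing $\phi$ absorbing any leftover determinant mismatch) keeps the corrections small. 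With ${\rm bott}_1(u\otimes1,Z)=0$ and $[Z]=0$, \ref{homt} produces a continuous path $V(t)$ of unitaries in $\widetilde{C\otimes M_{\p\q}}$ from $V(0)=1$ to $V(1)=Z$ with $\|[u\otimes1,V(t)]\|<\ep_1$ and $\pi(V(t))=1$. Set $S(t)=V(t)s_\p$: then $S(0)=s_\p\in\widetilde{C\otimes M_\p}$, $S(1)=Zs_\p=s_\q\in\widetilde{C\otimes M_\q}$, $\pi(S(t))=1$, and $S(t)^*(u\otimes1)S(t)=s_\p^*V(t)^*(u\otimes1)V(t)s_\p\approx s_\p^*(u\otimes1)s_\p\approx v\otimes1$, with total error $<\ep$ once $\ep_1$ was chosen small. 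The resulting $\mathbf S\in U(\widetilde{C\otimes{\cal Z}_{\p,\q}})$ is the required unitary, and pushing it forward into $\widetilde{C\otimes{\cal Z}}\subseteq\widetilde{B\otimes{\cal Z}}$ completes the argument.

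\textbf{Main obstacle.} The heart of the proof is the interpolation step — arranging simultaneously $[Z]=0$ and ${\rm bott}_1(u\otimes1,Z)=0$ in the relevant $K$-groups so that \ref{homt} applies. The $K_1$-part is free; the tracial part of ${\rm bott}_1$ is killed by taking the \emph{same} determinant datum $a_0$ at both ends (this is where $uv^*\in CU(B)$ enters, guaranteeing a small representative), and removing the residual part forces one to use the arithmetic $\Q_\p+\Q_\q=\Q_{\p\q}$, the correcting unitaries of \ref{botext}, and control that those corrections stay small in the tracial pairing, with the trace-collapsing map $\phi$ cleaning up the last bit. The spectrum hypothesis (only ${\rm sp}(u)=\T$ is assumed) causes no difficulty, since the conjugation direction coming out of \ref{LN51} is exactly the one needed.
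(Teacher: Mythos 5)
Your proposal follows essentially the same zig-zag strategy as the paper's proof: apply Lemma~\ref{LN51} at the two UHF endpoints with the \emph{same} determinant datum $a_0$, compute via the Exel formula that the tracial pairing of ${\rm bott}_1(u\otimes 1, Z)$ vanishes, split the resulting $K_0$-obstruction across $\p$ and $\q$, correct with Lemma~\ref{botext}, connect with Lemma~\ref{homt}, and assemble a unitary in $\widetilde{C\otimes{\cal Z}_{\p,\q}}\subset\widetilde{C\otimes{\cal Z}}$. So the architecture is right.

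The one place where your argument is genuinely too loose is the splitting step. You justify the decomposition by saying that $\Q_\p+\Q_\q=\Q$ makes $K_0(C\otimes M_{\p\q})$ the sum of the images of $K_0(C\otimes M_\p)$ and $K_0(C\otimes M_\q)$ and that the vanishing of the tracial pairing ``keeps the corrections small.'' But Lemma~\ref{botext} requires the prescribed bott class $x$ to satisfy $\|\rho_C(x)\|<\dt$, so what you actually need is that $y={\rm bott}_1(u\otimes1,Z)\in\ker\rho_{C\otimes Q}$ decomposes as $y=y_\p-y_\q$ with $y_\fr\in\ker\rho_{C\otimes M_\fr}$ --- a decomposition inside the \emph{kernels} of $\rho$, not merely inside $K_0$. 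This does not follow from $\Q_\p+\Q_\q=\Q$ alone. The paper obtains it by first invoking flatness of $\Q$, $\Q_\p$, $\Q_\q$ over $\Z$ (as in Lemma 5.3 of \cite{LN-3}) to get $\ker\rho_{C\otimes Q}=\ker\rho_C\otimes\Q$ and $\ker\rho_{C\otimes M_\fr}=\ker\rho_C\otimes\Q_\fr$, then writing $y=\sum_j x_j\otimes r_j$ with $x_j\in\ker\rho_C$ and $r_j=r_{j,\p}-r_{j,\q}$, $r_{j,\fr}\in\Q_\fr$. With that, $y_\fr=\sum_j x_j\otimes r_{j,\fr}$ lands exactly in $\ker\rho_{C\otimes M_\fr}$, so the hypothesis of \ref{botext} holds with room to spare. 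Relatedly, your remark that the trace-collapsing $\phi$ ``absorbs any leftover determinant mismatch'' is spurious: nothing is being absorbed; the splitting in $\ker\rho$ is exact, and the constructed unitary in $\widetilde{C\otimes{\cal Z}_{\p,\q}}$ is simply pushed forward into $\widetilde{C\otimes{\cal Z}}$ unchanged.
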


\begin{proof}

Without loss of generality, we may assume that $\pi(u)=\pi(v)=1.$
We first note, by \cite{LN-Range},
that $TR(A\otimes M_\fr)=0$ for any supernatural number $\fr.$
Let $\phi: C(\T)\to B$ be the monomorphism defined by
$\phi(f)=f(u).$

It follows from  \ref{Delta}
that there is a non-decreasing function
$\Delta: (0,1)\to (0,1)$ such that
\beq\label{MT1-1}
\mu_{\tau \circ \phi}(O_a)\ge \Delta (a)\tforal \tau\in T(C)
\eneq
for all open balls $O_a$ of $\T$ with radius $a\in (0,1).$


Let $\ep>0.$ As in \ref{herd}, we may assume that $C\subset M_n(A)$ for integer $n\ge 1.$

Let $\p$ and $\q$ be a pair of relatively prime supernatural numbers of infinite type with $\mathbb Q_\p+\mathbb Q_\q=\mathbb Q$. Denote by $M_\p$ and $M_\q$ the UHF-algebras associated to $\p$ and $\q$ respectively.
Let $\imath_\fr: C\to C\otimes M_\fr$ be the embedding defined by
$\imath_\fr(c)=c\otimes 1$ for all $a\in C,$ where $\fr$ is a supernatural number. Note $Tr(M_n(A)\otimes M_\fr)=0.$
Define $u_\fr=\imath_\fr(u)$ and $v_\fr=\imath_\fr(v).$
Denote  $B_\fr={\widetilde{ C\otimes M_\fr}},$ $\fr=\p,\,\q.$ Denote by $\phi_\fr: C(\T)\to B_\fr$ the \hm s defined by $\phi_\fr(f)=f(u_\fr)$
for all $f\in C(\T),$ $\fr=\p,\,\q.$

Let $\dt_1>0$ (in place of $\dt$)
be required by \ref{homt} for $\ep/6.$
Without loss of generality, we may assume that $\delta_1<\ep/12$  and is small enough
such that
$\mathrm{bott}_1(u_1, z_j)$  and
$\mathrm{bott}_1(u_1, w_j)$ are well defined and
\beq\label{Ap-10}
\mathrm{bott}_1(u_1,w_j)=
\mathrm{bott}_1(u_1, z_1)+\cdots + \mathrm{bott}_1(u_1, z_j)
\eneq
if $u_1$ is a unitary and  $z_j$ is any unitaries with $\|[u_1, u_j]\|<\delta_1,$  where $w_j=z_1\cdots z_j,$ $j=1,2,3,4.$
Let $\dt_2>0$ (in place $\dt$) be require by \ref{botext} for $\dt_1/8$
(in place of $\ep$).

Furthermore, one may assume that $\delta_2$ is sufficiently small such that for any unitaries $z_1, z_2$ in a C*-algebra with tracial states, $\tau(\frac{1}{2\pi i}\log(z_iz_j^*))$ ($i, j=1,2,3$) is well defined and
\vspace{-0.1in}
$$
\tau(\frac{1}{2\pi i}\log(z_1z_2^*))=\tau(\frac{1}{2\pi i}\log(z_1z_3^*))+ \tau(\frac{1}{2\pi i}\log(z_3z_2^*))
$$
for any tracial state $\tau$, whenever $\|z_1-z_3\|<\delta_2$ and $\|z_2-z_3\|<\delta_2$.
We may further assume that $\dt_2<\min\{\dt_1, \ep/6,1\}.$

Let $\dt>0,$ $\eta>0$
and
$\dt_3>0$ (in place of $\sigma$) required by \ref{LN51} for $\dt_2$ (in place of $\ep$).

Now assume that $u$ and $v$ are two unitaries which satisfy the assumption of the lemma with above $\dt$ and $\eta.$

Since $uv^*\in CU(B),$ $R_{u,v}\in \overline{\rho_B(K_0(B))}.$  Since $\pi(u)=\pi(v)=1,$ we may assume
that $R_{u,v}(t_0)=0$ (see \ref{Ruv}).
It follows that there is $a\in \aff(T(C))$ with $\|a\|<\dt_3/2$ such that
$a-R_{u,v}|_{T(C)}\in \rho_C(K_0(C)).$ Then the image of $ a_\p-R_{u_\p,v_\p}$
is in $\rho_{C\otimes M_\p}(K_0(C\otimes M_\p))),$
 where $a_\p$ is the image of $a$ under the map
 induced by $\imath_\p.$ The same holds for $\q$. Note that
 \beq\label{MT1-22}
 \mu_{\tau\circ \phi_\fr}(I_a)\ge \Delta(a)\tforal \tau\in T(C)
 \eneq
 and for all $a>0$ (and certainly holds for all $a\ge \eta$).
  By Lemma \ref{LN51} there {exist} unitaries $z_\p\in B_\p$ and $z_\q\in B_\q$ such that
 $$[z_\fr]=0\,\,\,{\rm in}\,\,\, K_1(B_\fr),\,\,\,\fr=\p, \q,\,\,\,\|u_\p-z_\p^* v_\p z_\p\|<\delta_2
{\andeqn}
\|u_\q-z_\q^* v_\q z_\q\|<\delta_2.
$$
\vspace{-0.2in}
Moreover,
\beq\label{Mp-19}
\tau(\frac{1}{2\pi i}\log(u_\p^* z_\p^* v_\p z_\p))
=a_\p(\tau)\tforal \tau\in \mathrm{T}(C_\p)
\andeqn \\
\tau(\frac{1}{2\pi i}\log(u_\q^* z_\q^* v_\q z_\q))
=a_\q(\tau)\tforal \tau\in\mathrm{T}(C_\q).
\eneq

We then identify $u_\p, u_q$ with $u\otimes 1$ and
$z_\p$ and $z_\q$ with the elements in the unitization of $C\otimes M_\p\otimes M_\q$ which
is also identified with the unitization of $C\otimes Q.$ In the following computation, we also identify
$T(C)$ with $T(C_\p),$ $T(C_\q),$ and
$T(C_\p),$ or $T(C_\q)$ with $T(C\otimes Q)$
by identifying $\tau$ with $\tau\otimes t,$ where $t$ is the unique tracial state on $M_\p,$ or $M_\q,$ or $Q.$
In particular,
\beq\label{Mp-20}
a_\p(\tau\otimes t)&=&\tau(a)\tforal \tau\in T(C)\andeqn\\
a_\q(\tau\otimes t)&=&\tau(a) \tforal \tau\in T(C)
\eneq
We compute that
by the Exel formula (see 3.5 of {\cite{Lninv}} ),
\begin{eqnarray}
(\tau \otimes t)(\mathrm{bott}_1(u\otimes 1, z_\p ^*z_\q))&=&
 (\tau\otimes t)(\frac{1}{2\pi i}\log(z_\p^* z_\q(u^*\otimes 1)z_\q^* z_\p(u\otimes 1)))\\
&=&{(\tau\otimes t)(\frac{1}{2\pi i}\log(z_\q(u^*\otimes 1) z_\q^* z_\p(u\otimes 1)z_\p^*)}\\
&=&{(\tau\otimes t)(\frac{1}{2\pi i}\log(z_\q(u^*\otimes 1) z_\q ^*(v\otimes 1))}\\
&&+(\tau\otimes t)({1\over{2\pi i}}\log((v^*\otimes 1)z_\p(u\otimes 1)
z_\p^*)\\
&=& (\tau\otimes t)(\frac{1}{2\pi i}\log(u_\q^* z_\q^* v_\q z_\q)\\
&&+(\tau\otimes t)({1\over{2\pi i}}\log(v_\p^* z_\p u_\p z_\p^*))\\
&=&\tau(a) -\tau(a)=0
\end{eqnarray}
for all $\tau\in T(C).$ It follows that
\beq\label{Mp-21}
\tau(\mathrm{bott}_1(u\otimes 1, z_\p^* z_\q))=0.
\eneq
for all $\tau\in T(C\otimes Q).$

Since the UHF-algebras $D:=Q$, $M_\p$ or $M_\q$ have unique trace, the map $\rho_C\otimes\mathrm{id}_{K_0(D)}$ is the same as the map $\rho_{C\otimes D}$ if $K_0(C\otimes D)$ is identified as $K_0(C)\otimes K_0(D)$ respectively.

Let $y=\mathrm{bott}_1(u\otimes 1, z_\p^* z_\q)\in \ker \rho_{C\otimes Q}.$

Since $\Q,$ $\Q_\p$ and $\Q_\q$ are flat $\Z$-modules, as in
the proof of 5.3 of \cite{LN-3},
\beq\label{ker}
{\rm ker}\rho_{C\otimes Q}&=&{\rm ker}\rho_C\otimes \Q\\\label{ker-1}
{\rm ker}\rho_{C\otimes M_\fr}&=&{\rm ker} \rho_C\otimes \Q_\fr,\,\,\,\,\fr=\p\andeqn \fr=\q.
\eneq
It follows that there are $x_1, x_2,...,x_l\in \rho_C(K_0(C))$
and $r_1,r_2,...,r_l\in \Q$ such that
$$
y=\sum_{j=1}^l x_j\otimes r_j.
$$
Since $\Q=\Q_\p+\Q_\q,$ one has $r_{j,\p}\in \Q_\p$ and
$r_{j, \q}\in \Q_\q$ such that
$r_j=r_{j,\p}-r_{j,\q}.$
So
$$
y=\sum_{j=1}^l x_j\otimes r_{j, \p}-\sum_{j=1}^l x_j\otimes r_{j, \q}.
$$
Put $y_\p=\sum_{j=1}^l x_j\otimes r_{j, \p}\in {\rm ker}\rho_{C\otimes M_\p}$ and $y_\q=\sum_{j=1}^l x_j\otimes r_{j, \q}\in {\rm ker}\rho_{C\otimes M_\q}.$

It follows from \ref{botext} that there are unitaries $w_\p\in B_\p$ and $w_\q\in B_\q$ such that
\beq\label{Mp-22}
[w_\fr]=0\,\,\,{\rm in}\,\,\, K_1(B_\fr),\,\,\,\fr=\p,\, \q,\,\,\,\|[u_\p\,, w_\p]\|<\dt_1/8,\,\,\, \|[u_\q,\, w_\q]\|<\dt_1/8\andeqn\\
{\rm bott}_1(u_\p, w_\p)=y_\p\andeqn
{\rm bott}_1(u_\q, w_q)=y_\q.
\eneq
Put $W_\p=z_\p^* w_\p \in  {\widetilde{C\otimes M_\p}}$ and $W_\q=z_\q^* w_\q \in {\widetilde{C\otimes M_\q}}.$ Then
\beq\label{Mp-23}
\|u_\p-W_\p^* v_\p W_\p\|<\dt_2+\dt_1/8<\ep/6\andeqn
\|u_\q-W_\q^* v_\q W_\q\|<\dt_2+\dt_1/8<\ep/6.
\eneq
Suppose that $\pi(W_\p)=\lambda_\p$ and $\pi(W_\q)=\lambda_q.$
Replacing $W_\p$ by ${\bar \lambda_\p} W_\p$ and $W_\q$ by ${\bar \lambda_q}W_\q,$ we may assume
that $\pi(W_\p)=\pi(W_\q)=1.$

Note, again,  that $u_\fr=u\otimes 1$ and $v_\fr=v\otimes 1,$  $\fr=\p,\,\q.$
With identification of $W_\fr, w_\fr, z_\fr$ with unitaries in the unitization $C\otimes Q,$
we also have
\beq\label{Mp-24}
\|[ u\otimes 1, \, W_\p^* W_\q]\|<\dt_1/4\andeqn
\eneq
\beq\label{Mp-25}
{\rm bott}_1(u\otimes 1, W_\p^* W_\q) &=&
\mathrm{bott}_1(u\otimes 1,  w^*_\p z_\p z^*_\q  w_\q)\\
&=&
 \mathrm{bott}_1(u\otimes 1, w^*_\p)+
 \mathrm{bott}_1(u\otimes 1, z^*_\p z_\q)+
 \mathrm{bott}_1(u\otimes 1, w_\q)\\\label{Mp-25+}
 &=& -y_\p +(y_\p-y_\q)+y_\q=0.
 \eneq

Let $Z_0=W_\p^* W_\q.$  Then $Z_0\in U_0(C\otimes Q)$ since $C\otimes Q$ has stable rank one.  Note $\pi(Z_0)=\pi(W_\p^*)\pi(W_\q)=1.$ Then it follows from  the choice of $\dt_1,$ (\ref{MT1-22}), (\ref{Mp-25+}) and  \ref{homt} that there is a continuous path of unitaries $\{Z(t): t\in [0,1]\}\subset  {\widetilde{C\otimes Q}}$ such that
$Z(0)=Z_0$ and $Z(1)=1$ and
\beq\label{Mp-26}
\|[u\otimes 1,\, Z(t)]\|<\ep/6\tforal t\in [0,1].
\eneq
Moreover,  by \ref{homt}, we may assume that
\beq\label{Mp-26+1}
\pi(Z(t))=1\tforal t\in [0,1].
\eneq
Define $U(t)=W_\q Z(t).$ Then
\beq\label{Mp-26+2}
U(0)=W_\p,\,\,\, U(1)=W_\q\andeqn \pi(U(t))=1\tforal t\in [0,1].
\eneq
 So, in particular, $U(0)\in B_\p$ and $U(1)\in B_\q.$
Therefore,  by (\ref{Mp-26+2}), $U\in {\widetilde{ C\otimes {\cal Z}_{\p, \q}}}\subset {\widetilde{C\otimes {\cal Z}}}$ is a unitary and, by  (\ref{Mp-23}) and  (\ref{Mp-26}),
\beq\label{Mp-27}
\|u\otimes 1-U^*(v\otimes 1)U\|<\ep/3.
\eneq

\end{proof}


The following is a non-unital version of Theorem 4.6 of \cite{Lnexp}.

\begin{thm}\label{Texpl}
Let $A$ be a unital separable simple ${\cal Z}$-stable \CA\,  in ${\cal A}_0.$
Let $C$ be a non-unital hereditary \SCA\ of $A \otimes \mathcal{K}$
with continuous scale and let $B={\tilde C}.$
Suppose that $u\in CU(B).$ Then, for any $\ep>0,$ there exists a self-adjoint element $h\in B$ with $\|h\|\le 1$ such that
\beq\label{Texpl-0}
\|u-\exp(i2\pi h)\|<\ep.
\eneq
\end{thm}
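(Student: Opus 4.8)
The plan is to reduce to the case ${\rm sp}(u)=\T$ and then, after tensoring with $\mathcal{Z}$, to recognize $u$ as approximately a single exponential $\exp(i2\pi H)$ with $\|H\|\le 1$ by comparing it against a carefully chosen ``model'' unitary via \ref{Mapp}, finally transporting the conclusion back to $B$ along the $\mathcal{Z}$-absorbing isomorphism of \ref{herd}. First I would dispose of the case ${\rm sp}(u)\neq\T$, where the statement is essentially trivial: since $u\in CU(B)$ and $\pi$ is a character we have $\pi(u)=1$, so $1\in{\rm sp}(u)$, and one may pick $\lambda=e^{i\alpha}\in\T\setminus{\rm sp}(u)$ with $\alpha\in(0,2\pi)$. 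The branch $g$ of $\tfrac1{2\pi i}\log$ cut at $\lambda$ (so $g(e^{i\theta})=\theta/2\pi$ for $\theta\in(\alpha-2\pi,\alpha)$) is continuous on ${\rm sp}(u)$, has $g(1)=0$ and $\|g\|_\infty<1$, and satisfies $e^{i2\pi g(z)}=z$; hence $h:=g(u)$ lies in $C\subseteq B$, is self-adjoint with $\|h\|\le1$, and $\exp(i2\pi h)=u$ exactly. So from now on assume ${\rm sp}(u)=\T$ and $\pi(u)=1$.

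Fix $\ep>0$. By \ref{Delta} I would choose a nondecreasing $\Delta\colon(0,1)\to(0,1)$ with $\mu_{\tau\circ\phi}(I_a)\ge\Delta(a)$ for all $\tau\in T(C)$ and all arcs $I_a$, where $\phi(f)=f(u)$; let $\eta,\dt>0$ and a finite $\mathcal G\subset C(\T)_{s.a.}$ be the data supplied by \ref{Mapp} for $\ep/2$ and this $\Delta$. Now I would build the model unitary $v=\exp(i2\pi h_0)$ with $h_0\in C_{s.a.}$, $\|h_0\|\le1$, as follows. Since ${\rm sp}(u)=\T$ and $T(C)$ is compact, the spectral distributions $\nu_\tau$ of $u$ form a weak$^*$-continuous family of probability measures on $\T$ with $\nu_\tau(I_a)\ge\Delta(a)$; pushing them forward by a branch $g_\lambda$ of $\tfrac1{2\pi i}\log$ as above gives a continuous family $\mu_\tau$ of probability measures supported in $(-1,1)$, and since $e^{i2\pi g_\lambda(z)}=z$, a self-adjoint element with $\tau$-distribution $\approx\mu_\tau$ will have $\exp(i2\pi\,\cdot\,)$ with $\tau$-distribution $\approx\nu_\tau$. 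The barycenters $\tau\mapsto\int t\,d\mu_\tau(t)$ record the ``mean argument'' of $u$, hence agree with $\tfrac1{2\pi i}{\rm Det}(u)$ on $T(C)$ up to $\rho_C(K_0(C))$, and therefore lie in $\overline{\rho_C(K_0(C))}$ because $u\in CU(B)$. Using the existence theorems for self-adjoint elements with prescribed tracial data in simple $\mathcal{Z}$-stable $C^*$-algebras of continuous scale, realize $h_0\in C_{s.a.}$ with $\|h_0\|\le1$, with $\tau$-distribution within the tolerance $\dt$ (tested against $\mathcal G$) of $\mu_\tau$ for every $\tau$, and with $\widehat{h_0}$ equal to the above barycenter family. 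Then $v=\exp(i2\pi h_0)$ satisfies $[v]=0=[u]$ in $K_1(C)$, $\pi(v)=1=\pi(u)$, $v\in CU(B)$ (so $uv^*\in CU(B)$) by the de la Harpe--Skandalis determinant, and $|\tau(g(u))-\tau(g(v))|<\dt$ for all $g\in\mathcal G$, $\tau\in T(C)$. Thus \ref{Mapp} applies --- this is the step where the hypothesis $A\in\mathcal A_0$ enters, through $TR(A\otimes M_\p)=0$ --- and yields a unitary $w\in U(\widetilde{C\otimes\mathcal Z})$ with $\|w^*(u\otimes1)w-(v\otimes1)\|<\ep/2$.

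Since $v\otimes1=\exp(i2\pi(h_0\otimes1))$ and $h_0\otimes1$ lies in the ideal $C\otimes\mathcal Z$ of $\widetilde{C\otimes\mathcal Z}$, the element $H:=w(h_0\otimes1)w^*$ lies in $C\otimes\mathcal Z$, has $\|H\|=\|h_0\|\le1$, and satisfies $\|u\otimes1-\exp(i2\pi H)\|<\ep/2$. By \ref{herd} and the strong self-absorption of $\mathcal Z$, the inclusion $c\mapsto c\otimes1$ of $C$ into $C\otimes\mathcal Z$ is approximately unitarily equivalent to an isomorphism; extending to unitizations I obtain an isomorphism $\Psi\colon\widetilde{C\otimes\mathcal Z}\to B$ and a unitary $V\in B$ with $\|V^*\Psi(u\otimes1)V-u\|<\ep/4$. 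Applying $\Psi$, which carries $\exp(i2\pi H)$ to $\exp(i2\pi\Psi(H))$ with $\Psi(H)\in C$ of norm $\le1$, and then conjugating by $V$, I get a self-adjoint $h:=V^*\Psi(H)V\in C\subseteq B$ with $\|h\|\le1$ and $\|u-\exp(i2\pi h)\|<\ep/4+\ep/2<\ep$, which completes the proof.

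The hard part will be the construction of the model unitary $v$: one needs a self-adjoint $h_0\in C$ of norm at most $1$ that matches, simultaneously, the finitely many tracial data of $u$ and the constraint $v\in CU(B)$. These two requirements are compatible only because $u\in CU(B)$ forces its de la Harpe--Skandalis determinant --- equivalently, the barycenters of the admissible target distributions $\mu_\tau$ --- into $\overline{\rho_C(K_0(C))}$; the identity ``mean argument $=$ determinant modulo $\rho_C(K_0(C))$'' is precisely what makes the prescribed $\mu_\tau$ realizable by an element of $C$, after which one quotes the existence theorems for self-adjoint elements with prescribed tracial behaviour in $\mathcal{Z}$-stable $C^*$-algebras. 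Everything else --- the trivial full-spectrum reduction, the invocation of \ref{Mapp}, and the transport along the $\mathcal{Z}$-absorbing isomorphism --- is routine once this model is available.
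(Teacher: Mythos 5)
Your high-level strategy — reduce to ${\rm sp}(u)=\T$, build a model $v=\exp(i2\pi h_0)$ with $h_0\in C_{s.a.}$, invoke \ref{Mapp} to conjugate $u\otimes1$ close to $v\otimes1$ in $\widetilde{C\otimes\mathcal Z}$, then transport back to $B$ via $\mathcal Z$-absorption — is the same as the paper's, and the tail end (the transport estimate) is essentially correct. But the construction of the model $h_0$, which you yourself flag as ``the hard part,'' is where the proposal has a genuine gap.

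The central claim you use is that the barycenters $\tau\mapsto\int t\,d\mu_\tau(t)$ of the pushed-forward spectral measures ``agree with $\tfrac{1}{2\pi i}{\rm Det}(u)$ up to $\rho_C(K_0(C))$'' and hence lie in $\overline{\rho_C(K_0(C))}$. In the full-spectrum case this is not established, and there is no obvious reason it should hold. The branch $g_\lambda$ of $\tfrac1{2\pi i}\log$ has a jump discontinuity, and since ${\rm sp}(u)=\T$ the measures $\nu_\tau$ put mass arbitrarily close to the cut point; the element $g_\lambda(u)$ exists only by Borel functional calculus in $B^{**}$, not in $B$, so the path $\exp\bigl(2\pi i(1-s)g_\lambda(u)\bigr)$ does \emph{not} lie in $U(B)$ and cannot be used to compute the de la Harpe--Skandalis determinant. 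Consequently the identity ``mean argument $=$ determinant mod $\rho_C(K_0(C))$'' is not available, the weak$^*$-continuity of $\tau\mapsto\mu_\tau$ is also not automatic (the pushforward by a discontinuous function need not be continuous), and the vaguely cited ``existence theorems for self-adjoint elements with prescribed tracial data'' cannot be applied as stated, because it is exactly this compatibility condition that one has to produce, not assume.

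The paper resolves this precisely by \emph{not} trying to read off a logarithm of $u$ directly. It first tensors with a UHF algebra $M_\p$: since $A\in\mathcal A_0$, one has $TR(A\otimes M_\p)=0$, so $\widetilde{C\otimes M_\p}$ has real rank zero, and Lemma \ref{expR0} (whose proof uses the real rank zero exponential length result, Theorem 4.5 of \cite{Lnexp}) produces a genuine logarithm $h_\p\in\widetilde{C\otimes M_\p}$ with $\tau(h_\p)=0$ for all $\tau$. The zero-trace condition — not merely membership of some barycenter in $\overline{\rho_C(K_0(C))}$ — is what makes the model automatically lie in $CU$. The tracial data of $h_\p$ is then transplanted to a self-adjoint $h_0\in A$ via the existence theorem Lemma 3.8 of \cite{Lnexp}, and finally $h_0$ is pushed into $C$ using the continuity of the scale, strict comparison, and the bound (\ref{expR0-1+1}) on $d_\tau(|h_\p|)$. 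None of these three steps (the real-rank-zero logarithm with zero trace, the transport, the relocation into $C$) appear in your proposal, and the first one cannot be short-circuited: it is where the hypothesis $A\in\mathcal A_0$ actually does its work. You cite $\mathcal A_0$ only ``through \ref{Mapp},'' but in the paper it is also needed to manufacture the model $v$ in the first place.

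A smaller issue: after obtaining $h_0\in A$ one must still argue that it may be taken inside $C$; your proposal asserts $h_0\in C_{s.a.}$ at the outset without addressing this. And the assertion that the first-factor embedding $C\to C\otimes\mathcal Z$ is approximately unitarily equivalent to an isomorphism (with unitaries in the unitization) is a known but nontrivial non-unital self-absorption fact; the paper instead uses the unital version $j\circ\imath$ approximately inner on $A$ and then cuts down with elements $f_c(e)$, which avoids the non-unital subtleties. These last two points are fixable; the barycenter--determinant identity is the real obstruction.
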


\begin{proof}
Note that the assumption that $u\in CU(B)$ implies that $\pi(u)=1,$ where
$\pi: B\to \C$ is the quotient map.
Since $C$ has continuous scale, without loss of generality, we may assume that $C\subset M_k(A)$ for some
$k\ge 1.$ To simplify notation, we may further assume, without loss of generality, that
$C\subset A.$
We assume that ${\rm sp}(u)=\T,$ otherwise $u=\exp(ig(u))$ for some (real-valued) continuous  branch of logarithm  with
$\|g(u)\|\le 2\pi.$ Let $\ep>0.$ Let $\phi: C(\T)\to A$ be defined by $\phi(f)=f(u).$ It is a unital monomorphism.
It follows from \ref{Delta}  that there is a non-decreasing function
$\Delta_1: (0,1)\to (0,1)$ such that
\beq\label{Texpl-1}
\mu_{\tau \circ \phi}(I_a)\ge \Delta_1 (a)\tforal \tau\in T(C)
\eneq
for all  arcs  $I_a$ of $\T$ with length $a\in (0,1).$
Define $\Delta(a)=(1/3)\Delta_1(3a/4)$ for all $a\in (0,1).$

Choose $\dt_1>0$ satisfying the following: If $h_1, h_2$ are two selfadjoint elements in any unital \CA\, with $\|h_j\|\le 3\pi,$
$j=1,2,$ such that
$$
\|h_1-h_2\|<\dt_1,
$$
then
$$
\|\exp(ih_1)-\exp(ih_2)\|<\ep/4.
$$
We may assume that $\dt_1<\ep/4.$
Note, by \cite{LN-Range}, for any supernatural number $\p$ of infinite type,
$TR(A\otimes M_\p)=0.$
Let $\p$ and $\q$ be two relatively prime supernatural numbers of
infinite type. Consider $u\otimes 1.$ Denote by $u_\p$ for
$u\otimes 1$ in $A\otimes M_\p.$
For any $\dt_1/2>\ep_0>0,$ by  \ref{expR0},  there is a
selfadjoint element $h_\p\in {\widetilde {C\otimes M_\p}}$ with
${\rm sp}(h_\p)=[-2\pi, 2\pi]$ such that
\beq\label{Texpl-2}
\|u_\p-\exp(ih_\p)\|<\ep_0 \andeqn \tau(h_\p)=0\tforal \tau\in T(C\otimes M_\p).
\eneq

Moreover, for some $1>r>0,$
\beq\label{Texpl-2+}
\lim_{n\to\infty}\tau(|h_\p|^{1/n})<1-r\tforal \tau\in T(C).
\eneq
Let $\Gamma: C([-2\pi, 2\pi])_{s.a.}\to \aff(T(C))$ be defined
by
\beq\label{Texpl-3}
\Gamma(f)(\tau)=(\tau\otimes t)(f(h_\p))\tforal f\in C([-2\pi, 2\pi])_{s.a.}
\eneq
and for all $\tau\in T(C),$
where
$t$ is the unique tracial state on $M_\p.$

Let  (note we now assume that $C\subset A$)
\beq\label{Texpl-3+}
d(\tau)=\sup\{\tau(c): 0\le c\le 1\andeqn c\in C\}\tforal \tau\in T(A).
\eneq
Note that $\inf_{\tau\in T(A)}(d(\tau))>0$ since $A$ is simple.
Since $C$ has continuous trace, $d\in Aff(T(A)).$ Define
$\Gamma_1: C([-2\pi, 2\pi])_{s.a.}\to \Aff(T(A))$ by
\beq\label{Texpl-3+1}
\Gamma_1(f)(\tau)=d(\tau) \Gamma(f)(\tau/d(\tau))\tforal \tau\in T(A).
\eneq

Let $\eta>0,$ $\dt>0$ and let ${\cal G}$ be a finite subset as required by \ref{Mapp} for $\dt_1/4$ (in place of $\ep$) and $\Delta.$
Note $\Delta(a)=(1/3)\Delta_1(3a/4)$ for all $a\in (0,1).$
Choose $\ep_0$ sufficiently small, so the following holds (see Lemma 3.4 of \cite{Lninv}):
For any unitary $v\in C\otimes M_\p,$ if
$\|u_\p-v\|<\ep_0,$ then
\beq\label{Texpl-4}
\mu_{\tau\circ \psi}(I_a)\ge \Delta(a)\tforal \tau\in T(C)
\eneq
and for all arcs $I_a$ with length $a\ge \eta,$
where $\psi: C(\T)\to A\otimes M_\p$ is the \hm\, defined by
$\psi(g)=g(v)$ for all $g\in C(\T),$ and
\beq\label{Texpl-5}
|\tau(g(u_\p))-\tau(g(v))|<\dt\tforal \tau\in T(C)
\eneq
and for all $g\in {\cal G}.$
Note each $\tau\in C\otimes M_\p$ may be written as
$s\otimes t,$ where $s\in T(C)$ is any tracial state and
$t\in T(M_\p)$ is the unique tracial state.
It follows from  3.8 of \cite{Lnexp} that there exists a selfadjoint
element $h_0\in A$  with ${\rm sp}(h)=[-2\pi, 2\pi]$ such that
\beq\label{Texpl-6}
\tau(f(h_0))=\Gamma_1(f)(\tau)=({\tau\otimes t\over{d(\tau)}})(f(h_\p))d(\tau)\tforal f\in C([-2\pi, 2\pi])
\eneq
and for all $\tau\in T(A).$  It follows that
$$
d_\tau(|h_0|)< d(\tau)(1-r).
$$
for all $\tau\in  T(A)$ (see (\ref{Texpl-2+})).
Let $\{e_n\}$ be an
approximate identity  for $C$ ($e_n$ may not be projections).
Since $C$ has continuous scale, we may assume
$$
d_\tau(|h_0|)<\tau(e_n)\tforal \tau\in T(A)
$$
and for some $n\ge 1.$ Note that $0\le e_n\le 1.$
So, in particular,
$$
d_{\tau}(|h_0|)<d_{\tau}(e_n)\tforal \tau\in T(C).
$$
By strict comparison,  we may assume that $h_0\in e_nCe_n.$
Note that
\beq\label{Texpl-7}
\tau(h_0)=0\tforal \tau\in T(C).
\eneq
Define $v_1=\exp(i h_0)\in A$ and denote by
$\psi_1: C(\T)\to A$ the \hm\, given by
$\psi_1(f)=f(v_1)$ for all $f\in C(\T).$
Note that, by (\ref{Texpl-6}),
\beq\label{Texpl-8}
\tau(g(v_1))=(\tau\otimes t)g(\exp(ih_\p))\tforal \tau\in T(C)
\eneq
and for all $f\in C(\T).$ By the choice of $\ep_0,$ as in (\ref{Texpl-4}) and (\ref{Texpl-5}),
\beq\label{Texpl-9}
\mu_{\tau\circ \psi_1}(I_a)=
\mu_{(\tau\circ \otimes t)\circ \psi_1}(I_a)\ge \Delta(a)\tforal \tau\in T(C)
\eneq
and for all arcs $I_a$ with length $a\ge \eta.$  Moreover,
\beq\label{Texpl-10}
|\tau(g(u))-\tau(g(v_1))|
=|(\tau\otimes t)(g(u_\p))-(\tau\otimes t)(g(\exp(ih_\p)))|<\dt
\eneq
for all $\tau\in T(A)$ and for all $g\in {\cal G}.$
Note since $\tau(h_0)=0$ for all $\tau\in T(C),$ $v_1\in CU(C).$
It follows from \ref{Mapp} that there exists a unitary $W\in {\widetilde C\otimes {\cal Z}}$ such that
\beq\label{Texpl-11}
\|(u\otimes 1)-W^*(v_1\otimes 1)W\|<\dt_1/4.
\eneq

Denote by $\imath: A\to A\otimes {\cal Z}$ defined by $\imath(a)=a\otimes 1$ and define
$j: A\otimes {\cal Z}\to A$ such that $j\circ \imath$ is approximately inner. Since
$A$ is separable, there is $e\in C_+$ with $0\le e\le 1$ which is strictly positive in $C.$
Let $f_c(t)=1$ if $t\in [c, 1],$ $f_c(t)=0$ if $t\in [0,c/2]$ and linear in $[c/2, c].$
So $\{f_{1/n}(e)\}$ and $\{f_{1/n}(e)\otimes 1\}$ form  approximate identities for $C$ and
for $C\otimes {\cal Z},$ respectively.
Choose $1>c>0$
such that
\beq\label{Texpl-12}
\|(f_c(e)\otimes 1)W^*(v_1\otimes 1)W-W^*(v_1\otimes 1)W\|
&<&\dt_1/16\pi \andeqn\\\label{Texpl-12+1}
  \|W^*(v_1\otimes 1)W(f_c(e)\otimes 1)-W^*(v_1\otimes 1)W\|&<&\dt_1/16\pi
\eneq
Let $z\in U(A)$ such that
\beq\label{Texpl-13}
\|z^*j\circ \imath(a)z-a\|<\dt_1/16\pi
\eneq
for $a\in \{u, v_1, h, f_c(e), e, f_{c/2}(e)\}.$
Then,
\beq\label{Texpl-14}
\|u- z^*j(W^*(v_1\otimes 1)W)z\|<\dt_1/16\pi.
\eneq
Moreover,
\beq\label{Texpl-14+1}
\|f_{c/2}(e)z^*j(f_{c}(e)\otimes 1)z-z^*j(f_{c}(e)\otimes 1)z\|<\dt_1/8\pi
\eneq

Put $H=z^*j(W^*)j(h_0\otimes 1)j(W)z, $  $H_1=z^*j(f_c(e)\otimes 1)j(W^*)j(h_0\otimes 1)j(W)j(f_c(e)\otimes 1)z$ and $h=f_{c/2}(e)z^*j(W^*)j(h_0\otimes 1)j(W)zf_{c/2}(e).$
Then $\|H\|\le 2\pi, \|H_1\|\le 2\pi$ and $\|h\| \le 2\pi.$
We note that $z^*j(W^*)j(h_0\otimes 1)j(W)z\in A$ and $C$ is hereditary,
$h\in C$ (recall that we assumed that $C\subset A$ at the beginning of this proof).
Since $v_1=\exp(ih_0),$ by (\ref{Texpl-14}),
\beq\label{Texpl-14+2}
\|u-\exp(iH)\|<\dt_1/16\pi.
\eneq
It follows from  (\ref{Texpl-12}) and (\ref{Texpl-12+1}) that
\beq\label{Texpl-15}
\|H-H_1\|<4\dt_1/16=\dt_1/4
\eneq
and by (\ref{Texpl-14+1}),
\beq\label{Texpl-16}
\|H_1-h\|<4\dt_1/8=\dt_1/2.
\eneq
Therefore
\beq\label{Texpl-17}
\|H-h\|<\dt_1.
\eneq
By the choice of $\dt_1,$  we have
\beq\label{Texpl-16}
\|\exp(i H)-\exp(i h)\|<\ep/4.
\eneq
It follows from (\ref{Texpl-14+2}) and (\ref{Texpl-16}) that
\beq\label{Texpl-17}
\|u-\exp(ih)\|<\ep.
\eneq
The point is that now  $h\in C.$
\end{proof}

\section{Unitaries in a small corner}

The main goal of this section is to prove a technical lemma of \ref{cornerunit}
which is 
used to compute $K_1$ groups of $J$ (as in \ref{MinIdeal}). 

Key steps in the proof of Lemma \ref{cornerunit} are 
Lemmas \ref{lem:Pre2Diagonals} and \ref{lem:BasisStep},  
which are attempts to generalize  
a well-known argument of 
Elliott (\cite{Elliott1974}, proof of Theorem 2.4).      
Elliott's argument roughly  says the following:  If $B$ is a nonunital
UHF-algebra and $u$ is a unitary in $M(B)$, then $u$ can be approximately
factored into the product of two appropriate block diagonal unitaries. Each
block diagonal unitary will turn out to be homotopic to a block diagonal unitary
with a trivial block (i.e., a block which is a projection).

The argument of Elliott has been generalized by a number of authors
(e.g., \cite{Mingo} Part 2, \cite{LinWVNII} Lemma 2.6 and 
\cite{ZhK_1Flow} Lemma 1.6).  However, unlike the case \cite{Mingo}, $J$ is very different from the full multiplier algebra $M(B\otimes {\cal K}).$ Moreover, $B$ does not have real rank zero which were  cases in \cite{Elliott1974},  \cite{LinWVNII}
and in \cite{ZhK_1Flow}. In fact, our main case is $B={\cal Z}$ which completely lacks of projections.  Nevertheless
we will try to carry out Elliott's idea without projections in hand.

Let $A$ be a C*-algebra. Recall  that $A\otimes {\cal K}$ has \emph{has
strict comparison of positive elements by traces} 
if for all $a, b \in (A \otimes \K)_+$,
the condition that
$d_{\tau}(a) < d_{\tau}(b)$ whenever $d_{\tau}(b) < \infty$ for every
nonzero densely defined lower semicontinuous trace $\tau$ on $A \otimes \K$,
implies that $a \preceq b$. 

Often, we will abbreviate the above by just saying that $A$ has \emph{strict 
comparison of
positive elements} or $A$ has \emph{strict comparison}.\\


\begin{prop}\label{Pstrictcomp}
Let $A$ be a unital simple \CA\, which has the strict comparison (in the sense of \ref{cuntz}).
Suppose that $A$ has only finitely many extremal tracial states. 
Then $A\otimes {\cal K}$ has the strict comparison in the sense of above.
\end{prop}

\begin{proof}
Suppose that $a, \, b\in A\otimes {\cal K}_+$ such that 
$d_\tau(a)<d_\tau(b)$ for all tracial states $\tau\in T(A)$ for which $d_\tau(b)<\infty.$ 
Without loss of generality, we may assume that $0\le a,\, b\le 1.$
Let $\{\tau_1, \tau_2,...,\tau_k\}$ be the set of extremal tracial states of $A.$ 
We may assume that 
$d_{\tau_i}(b)<\infty$  for all $i\le k_1$ and $d_{\tau_i}(b)=\infty$ if $k\ge i>k_1.$ 
Set
\beq\label{Pstr-1}
\dt=\min\{d_{\tau_i}(b)-d_{\tau_i}(a): 1\le i\le k_1\}.
\eneq
Let $\{e_{i,j}\}$ be a system of matrix unit for ${\cal K}.$ Set $e_n=\sum_{i=1}^n e_{i,i},$ $n=1,2,....$ 

For any $1/2>\ep>0,$ there exists an integer $n_1\ge 1$ such that
\beq\label{Pstr-2}
\|e_{n_1}ae_{n_1}-a\|<\ep/4.
\eneq
Put $a_1=e_{n_1}ae_{n_1}.$ Then $d_\tau(a_1)<\infty$ for all $\tau\in T(A).$
Moreover, since $a_1\lesssim a,$ 
\beq\label{Pstr-2}
d_\tau(a_1)<d_\tau(a) \rforal \tau\in T(A).
\eneq
There exists an integer $m_0\ge 1$ such that
\beq\label{Pstr-3}
\tau_i(b^{1/m_0})>d_{\tau_i}(b)-\dt/2\rforal 1\le i\le k_1\andeqn\\
\tau_i(b^{1/m_0})>d_{\tau_i}(a_1) +\dt/2\rforal  k\ge i>k_1.
\eneq
Choose an integer $n_2\ge n_1$ such that
\beq\label{Pstr-4}
\|e_{n_2}b^{1/m_0}e_{n_2}-b^{1/m_0}\|<\min\{\ep/2, \dt/4\}.
\eneq
Put $b_1=e_{n_2}b^{1/m_0}e_{n_2}.$ 
Then we compute that
\beq\label{Pstr-5}
d_{\tau}(b_1)\ge \tau(b_1)>d_\tau(a_1)\rforal \tau\in T(A).
\eneq
By viewing both $b_1$ and $a_1$ as elements in $M_{n_2}(A),$ we obtain an element $x\in e_{n_2}(A\otimes {\cal K})e_{n_2}$ such that
\beq\label{Pstr-6}
\|x^*e_{n_2}b^{1/m_0}e_{n_2}x-a_1\|<\ep/4
\eneq
One then finds an element $y\in A\otimes{\cal K}_+$ such that 
\beq\label{Pstr-7}
\|x^*e_{n_1}y bye_{n_1}x-a_1\|<\ep/2.
\eneq
It follows that
\beq\label{Pstr-8}
\|x^*e_{n_2}ybye_{n_2}x-a\|<\ep.
\eneq
Since this holds for every $\ep>0,$ we conclude that $a\lesssim b.$
\end{proof}

\begin{prop} \label{prop:TracialDetermination}
 Suppose that $A$ is a simple, separable, stably finite C*-algebra such 
that $A \otimes \K$ has strict comparison of positive elements by 
traces.
Suppose,  in addition, that $A$ has stable rank one.

   Suppose that $p, \, q\in M(A\otimes {\cal K})\setminus A\otimes {\cal K}$ 
are two projections such that $\tau(p)=\tau(q)$ for every 
densely defined,
lower semicontinuous trace $\tau$.
  
Then $p \sim q$, i.e.,
there exists a partial isometry $v\in M(A\otimes {\cal K})$ such that
$$q = v^*v \makebox{  and  } p = v v^*.$$
   \end{prop}

\begin{proof}
Set $C=A\otimes{\cal K}.$ 
Let $a\in pCp$ and $b\in qCq$ be two strictly positive elements (in the
respective C*-algebras).
Then neither are projections.
By the assumption,
$$
d_\tau(a)=d_\tau(b)
$$
for every densely defined, lower semicontinuous trace $\tau$.

Since $C$ has strict comparison for positive elements and $A$ has stable rank one, one obtains
a partial isometry $v\in C^{**}$ such that 
$vv^* = p$, $v^* v = q$, 
$xv\in C$ for all $x\in {\overline{Ca}}$ and $vy\in C$ for all $y\in {\overline{bC}}.$
Moreover,
$$
v^*\overline{aCa}v={\overline{bCb}}.
$$
(See, for example, \cite{LinCuntzSemigroup} Theorem 1.7.)
For any $c\in C,$ since $q\in M(C),$ $qc\in C.$ It follows that $qc\in \overline{bC}.$ Therefore
$$
vc=vqc\in C.
$$
Similarly, $cv=cpv\in C.$ Therefore $v\in M(C).$
\end{proof}

The next result is standard (see, for example, 
\cite{JensenThomsen} Lemma 1.3.2).  We sketch the
argument for the convenience of the reader: 

\begin{lem} Suppose that $B$ is a stable C*-algebra.
Then 
$$H_B \cong B$$
where $H_B$ is the standard Hilbert $B$-module, and the isomorphism is
(unitary) isomorphism of Hilbert $B$-modules.

\label{lem:StableHilbertModule} 
\end{lem}

\begin{proof}[Sketch of proof:]  
Since $B$ is a stable C*-algebra, let $\{ W_n \}_{n=1}^{\infty}$ 
be a sequence of elements of $M(B)$ such that the following hold:
\begin{enumerate}
\item $W_n^* W_n = 1$ (i.e., $W_n$ is an isometry), for all $n \geq 1$. 
\item $W_m^* W_n = 0$ for all $m \neq n$.
\item $1_{M(B)} = \sum_{n=1}^{\infty} W_n W_n^*$, where the sum converges
strictly in $M(B)$. 
\end{enumerate}

Let $\Phi : H_B \rightarrow B$ be given by 
$$\Phi(b_1, b_2, b_3, ....) =_{df} \sum_{n=1}^{\infty} W_n b_n$$
for all $(b_1, b_2, b_3, ...) \in H_B$.

One can check that $\Phi$ is an (unitary) isomorphism of Hilbert $B$-modules.
\end{proof}
 
We now fix some notation that will be used in the next proposition and
its corollary.
For a simple C*-algebra $A$, we let $Ped(A)$ denote the Pedersen ideal
of $A$ (see \cite{PedersenIdeal}).  Recall that $Ped(A)$ is hereditarily
closed, and $Ped(A)$ can be characterized as the minimal dense two-sided
(algebraic) ideal of $A$ (\cite{PedersenIdeal}, \cite{LaursenSinclair}). 
For a nonzero positive element $e \in Ped(A)$,
let $T_e(A)$ denote the set of all densely defined, lower semicontinuous
traces $\tau$ on $A$ such that $\tau(e) = 1$.
When endowed with the topology of pointwise convergence on $Ped(A)$,
$T_e(A)$ is a Choquet simplex.  (See \cite{TikuisisToms} 
Proposition 3.4.  Note that their notation for $T_e(A)$ is different
from ours, and their terminology is also slightly different.) 

Of course, if $A$ is unital then $T_1(A) = T(A)$, where $T(A)$ is as
defined in Definition \ref{Dtrace}.

\begin{prop}\label{L43}
Suppose that $A$ is a simple, separable, stably finite C*-algebra, 
and let $e$ be a nonzero positive element of $Ped(A)$.

Suppose, in addition, that  
for every bounded strictly positive affine lower semicontinuous 
function $f: T_e(A)\to (0, \infty),$  there exists a non-zero 
$a\in (A\otimes {\cal K})_+$ which is not Cuntz equivalent to  a projection
such that $d_\tau(a)=f(\tau)$ for all $\tau\in T_e(A).$

Then for every bounded, strictly positive, affine, lower semicontinuous
function $f : T_e(A) \rightarrow (0, \infty)$,
there exists a projection
$p \in M(A \otimes \K) \setminus (A \otimes  \K)$
such that
$$\tau(p) = f(\tau)\tforal \tau\in T_e(A).$$
\label{prop:RangeOfTrace}
\end{prop}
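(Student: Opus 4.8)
The plan is to realise $f$ as $\tau\mapsto\tau(p)$ for a projection $p\in M(A\otimes\K)$ built by stabilising a Hilbert module over $C:=A\otimes\K$, using the Kasparov-module apparatus recalled just above the statement. First I would apply the standing hypothesis on $A$ to produce $a\in C_+$ with $\|a\|\le 1$ which is \emph{not} Cuntz equivalent to a projection and satisfies $d_\tau(a)=f(\tau)$ for every $\tau\in T(A)$; since $A$ is simple, $a$ is full in $C$. Set $D:=\overline{aCa}$, a hereditary \SCA\ of $C$ which is non-unital (otherwise its unit would be a projection Cuntz equivalent to $a$). The structural input is that the singly generated right Hilbert $C$-module $E:=\overline{aC}$, with $\langle ax,ay\rangle:=(ax)^*(ay)$, is a $D$-$C$ imprimitivity bimodule; in particular there is a $*$-isomorphism $K(E)\cong D$ carrying $\theta_{ax,ay}$ to $axy^*a$, and this isomorphism sends each $\tau\in T(A)$, regarded on the hereditary subalgebra $D\subseteq C$, to the trace $\theta_{\xi,\xi}\mapsto\tau(\langle\xi,\xi\rangle)$ on $K(E)$. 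Since $D$ is non-unital, $E$ is not finitely generated, so $1\notin K(E)$.

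Next I would invoke Kasparov's stabilisation theorem to obtain a unitary $u\colon E\oplus H_C\to H_C$, and let $P\in B(H_C)$ be the projection onto $u(E\oplus 0)$; then $1-P$ projects onto $u(0\oplus H_C)\cong H_C$, and $P\,K(H_C)\,P\cong K(E)\cong D$. Transporting $P$ through the $*$-isomorphisms $\Phi\colon B(H_C)\to M(C\otimes\K)$ and $\Psi\colon C\to C\otimes\K$ recalled before the statement — both of which preserve the relevant traces, by $(\ref{Trees})$ and the identity $\tau(\Phi(\theta_{\eta,\eta}))=\tau(\langle\eta,\eta\rangle)$, which follows from $(\ref{Apple})$ — I obtain a projection $p\in M(C)=M(A\otimes\K)$ with $pCp\cong D$.

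It remains to verify the two assertions. If $p$ were in $C$, then $pCp$ would be unital with unit $p$, contradicting non-unitality of $D\cong pCp$; hence $p\in M(A\otimes\K)\setminus(A\otimes\K)$. For the trace: the lower semicontinuous extension of $\tau$ to $M(C)$ satisfies $\tau(p)=\sup\{\tau(x):x\in C_+,\ x\le p\}$, and every such $x$ lies in $pCp\cong D$; letting $x$ run through an approximate unit of $D$ shows $\tau(p)=d_\tau(a')$ for a strictly positive element $a'$ of $D$. Since $a'\sim a$ in the Cuntz semigroup, $\tau(p)=d_\tau(a)=f(\tau)$, as required.

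The step I expect to be the main obstacle is this last piece of trace bookkeeping: showing that the projection, produced so indirectly through $\Phi$, $\Psi$ and the Kasparov unitary $u$, has trace \emph{exactly} $f$ rather than some rescaling or perturbation of it. This forces one to track that each identification used respects the densely-defined, lower-semicontinuous traces of the non-unital algebras involved; strict comparison of positive elements of $A\otimes\K$, in the sharp form recorded in Lemma~\ref{lem:StrictComp}, is what keeps $\tau(p)$ pinned to $d_\tau(a)$, and Proposition~\ref{prop:TracialDetermination} then shows that such a $p$ is unique up to Murray--von Neumann equivalence. (Alternatively, one could replace the Kasparov-module step by an appeal to Brown's stable isomorphism theorem to identify $D\otimes\K$ with $A\otimes\K$ and take $p$ to be the image of a rank-one projection $1_{M(D)}\otimes e_{1,1}$; but then one must argue separately that the isomorphism may be chosen trace-compatibly, which is again precisely this obstacle.)
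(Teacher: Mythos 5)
Your proof takes essentially the same route as the paper's: both pass through the Hilbert $C$-module $\overline{aC}$ and Kasparov stabilization/absorption, then transport the resulting projection in $B(H_C)$ through $\Phi$ and $\Psi$ to land in $M(A\otimes\K)$, using that these maps respect traces. The only genuine difference is stylistic — where you compute $\tau(p)$ abstractly via the lower-semicontinuous extension of $\tau$, the identification $pCp\cong D=\overline{aCa}$, and the Cuntz equivalence of $a$ with a strictly positive element of $D$, the paper makes the same point concretely by writing the generator as $\eta=(a_1,a_2,\dots)$, forming $b=\sum_j a_j\otimes e_{j,1}$, and computing $b^*b=a^2\otimes e_{1,1}$ directly so that $\tau(\Phi(q))=d_\tau(bb^*)=d_\tau(a)$.
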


\begin{proof}

Let $a \in C_+$ ($ = (A \otimes \K)_+$) be such that
$a$ is not
Cuntz equivalent to
a projection in $C$ and
\begin{equation}
d_{\tau}(a) = f(\tau)
\label{HBee}
\end{equation}
for all $\tau \in T_e(A)$.
By the Kasparov Absorption Theorem (\cite{Kasparov} Theorem 2)
and by Lemma \ref{lem:StableHilbertModule},
let $p \in M(C) \setminus C 
\cong L_C(C)\setminus K_C(C)$   
be a projection 
such that
$$p C \cong \overline{a C}$$
where the isomorphism is (unitary) isomorphism of Hilbert $C$-modules.
(In the above, $L_C(C)$ is the C*-algebra of all adjointable 
operators on the Hilbert module $C$, and $K_C(C)$ is the C*-subalgebra
of all compact adjointable
operators. See, for example, \cite{WeggeOlsen} Chapter 15.) 

Since $C$ is separable, we can find a strictly positive element 
$b \in p C p$.
Hence,
$$\overline{b C} \cong \overline{a C}.$$

Hence, by \cite{ORT} 
Proposition 4.3 (see also \cite{LinCuntzSemigroup}), 
$$d_{\tau}(b) = d_{\tau}(a)$$
for all $\tau \in T_e(A)$. 

But we have that for all $\tau \in T_e(A)$,
$f(\tau) = d_{\tau}(a)$ and $\tau(p) = d_{\tau}(b)$.
Hence, for all $\tau \in T_e(A)$,
$$f(\tau) = \tau(p)$$
as required.
\end{proof}

\begin{rem}
We note that by \cite{BrownPereraToms} Theorem 5.5,
if $A$ is a unital simple exact finite and $\mathcal{Z}$-stable
C*-algebra and if we take $e =_{df} 1$  
then $A$ satisfies the hypotheses of Proposition \ref{L43}.
\end{rem}

We can generalize Proposition \ref{L43} to the case where
the lower semicontinuous function is unbounded or takes the
value $\infty$.

\begin{cor}\label{C45}
Suppose that $A$ is a simple, separable, stably finite C*-algebra, and
let $e$ be a nonzero positive element of $Ped(A)$.  

Suppose, in addition, that $A$ has the property that, for every
bounded strictly positive affine lower semicontinuous function
$f : T_e(A) \rightarrow (0, \infty)$, there exists a nonzero
$a \in (A \otimes \K)_+$ which is not Cuntz equivalent to a projection
such that $d_{\tau}(a) = f(\tau)$ for all
$\tau \in T_e(A)$.\\
(E.g., $A$ can be unital simple exact finite and $\mathcal{Z}$-stable and 
$e = 1_{A}$.)

  Then for every strictly positive, affine, lower semicontinuous
function $f : T_e(A) \rightarrow (0, \infty]$, there exists
a projection $p \in M(A \otimes \mathcal{K}) \setminus
(A \otimes \mathcal{K})$
such that
$$\tau(p) = f(\tau)\tforal \tau\in T_e(A).$$
\label{Extra}
\end{cor}

\begin{proof}
   Let $\{ q_n \}_{n=1}^{\infty}$ be a sequence of pairwise orthogonal
projections in $M(A \otimes \mathcal{K})$ such that
$q_n \sim 1$ for all $n \geq 1$ and
$\sum_{n=1}^{\infty} q_n = 1$,
where the sum converges in the strict topology on $M(A \otimes \mathcal{K})$.

   Next, $f$ is the pointwise limit of a strictly increasing sequence
of strictly positive affine continuous functions on $T_e(A)$ (see
\cite{ABP} Lemma 5.3;  see also \cite{Goodearl} Theorem 11.12 or
\cite{Edwards} (Edwards' Separation Theorem); see also \cite{TikuisisToms}
Proposition 4.1 and Lemma 4.2).
In other words,
let $\{ f_n \}_{n=1}^{\infty}$ be a sequence of strictly positive
affine continuous functions on $T_e(A)$ with
$f_n < f_{n+1}$ for all $n$ such that $f_n \rightarrow f$ pointwise
(where $\infty$ is allowed to be the limit at a point) .

   For all $n \geq 1$,
let $g_n$ be the strictly positive affine continuous function on $T_e(A)$
that is given by
$g_n =_{df} f_n - f_{n-1}$  (where $f_0 =_{df} 0$).
For each $n \geq 1$, apply Proposition \ref{L43}
to get a projection $p_n \in q_n M(A \otimes \mathcal{K}) q_n$
such that
$$\tau(p_n) = g_n(\tau)\tforal \tau\in T_e(A).$$

Let $p =_{df} \sum_{n=1}^{\infty} p_n$.
Then the sum converges strictly and $p$ is a projection
in $M(A \otimes \mathcal{K}) - (A \otimes \mathcal{K)}$ such that
$$\tau(p) = f(\tau)\tforal \tau\in T_e(A).$$

\end{proof}

The next statement follows from \cite{KaftalNgZhang}: 
\begin{thm}\label{T46}
Let $A$ be a unital simple stably finite C*-algebra
with strict comparison of positive elements by traces such that 
$T(A)$ has finitely many extreme points.

Suppose that $a, b \in M(A \otimes \K)_+$ with $a \in Ideal(b)$
and $d_{\tau}(a) < d_{\tau}(b)$ for all $\tau \in T(A)$ for which
$d_{\tau}(b) < \infty$.

Then $a \preceq b$.  
\label{StrictComp} 
\end{thm}

\begin{proof}
This follows immediately from \cite{KaftalNgZhang} Theorem 5.4.
\end{proof}

The next lemma is standard (e.g., see Lemma 2.1 in \cite{Elliott1974}).

\begin{lem}\label{L44}
For every $\epsilon > 0$, there exists
$\delta > 0$ such that
the following holds:

If $C$ is a unital C*-algebra and $p, q \in C$ projections such that
$$p q \approx_{\delta} p$$
then there exists a unitary $u \in C$
such that
\beq u p u^* \leq q\andeqn
\| u - 1 \| < \epsilon.\eneq
\label{lem:ElliottPerturbation}
\end{lem}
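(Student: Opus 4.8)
The plan is to prove this by a direct perturbation argument, comparing the two projections $p$ and $pqp$, which are close in norm, and using the functional calculus together with the polar decomposition. First I would fix $\epsilon>0$ and work out what $\delta$ should be: given a projection $p$ and a projection $q$ with $\|pq-p\|<\delta$, note that $\|pqp-p\|\le \|pq-p\|\,\|p\|<\delta$ as well (writing $pqp - p = (pq-p)p$), so $pqp$ is a positive element in the corner $pCp$ that is within $\delta$ of the unit $p$ of that corner. For $\delta$ small (say $\delta<1/4$), $pqp$ is invertible in $pCp$ with spectrum in $(1-\delta,1]$, hence $r =_{df} \chi_{(1/2,\infty)}(qpq)$ (spectral projection in $qCq$, equivalently the range projection of $qpq$) is a projection with $r\le q$, and $r$ is close to $p$: one checks $\|r-p\|$ is bounded by a function of $\delta$ going to $0$ as $\delta\to 0$, using that $qpq$ and $pqp$ have the same nonzero spectrum and the range projection of $pqp$ is $p$.

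Next, with $r\le q$ a projection and $\|r-p\|<1$ (which I arrange by shrinking $\delta$ further, so that in particular $\|r-p\|<\epsilon/2$ suffices for the norm conclusion up to constants, or more carefully $\|r-p\|$ small enough for the standard Murray–von Neumann perturbation lemma), I would invoke the classical fact that two projections at distance less than $1$ are unitarily equivalent via a unitary close to $1$: explicitly, set $z = rp + (1-r)(1-p)$, so that $zp = rp = rz$, hence $zpz^{-1}=r$ once $z$ is invertible, and $z$ is invertible because $\|z-1\| = \|rp+(1-r)(1-p) - 1\| = \|(r-p) - \text{stuff}\|$ can be estimated by a multiple of $\|r-p\|$, which is $<1$. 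Taking $u$ to be the unitary in the polar decomposition $z = u|z|$ (which lies in $C$ since $C$ is unital and $z$ is invertible), one gets $upu^* = r \le q$, and $\|u-1\|\le \|z-1\|/(1-\|z-1\|)$-type bound, or more simply: since $|z| = (z^*z)^{1/2}$ is within $O(\|z-1\|)$ of $1$ and $u = z|z|^{-1}$, we get $\|u - 1\| < \epsilon$ provided $\delta$ was chosen small enough at the outset.

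The one genuinely careful step — and the place where I would be most careful with constants — is the first one: controlling $\|r - p\|$ in terms of $\delta$, where $r$ is the range projection of $qpq$. The cleanest route is: $\|pqp - p\| < \delta$ and $pqp \ge (1-\delta)p$ in $pCp$; the range projection of $pqp$ is $p$, and $r \sim p$ via $qpq \sim pqp$ (both equal the range projection of the other in the Murray-von Neumann sense, with the connecting partial isometry coming from $qp$, whose polar decomposition links $p$'s support-side and $q$'s side). Rather than tracking this through polar decompositions, I would instead observe directly that $\|qp - rp\|$ and $\|pq - pr\|$ are small: since $qpq$ has spectrum concentrated near $\{0\}\cup\{1\}$ when $\|qpq - r\|$... — actually the slickest argument is to note $\|(q-r)p\|^2 = \|p(q-r)^2 p\| = \|p(q - r)p\|$ (as $q-r$ is... no, $q-r\ge 0$ but not a projection) — so I would just cite the standard estimate (as the lemma statement itself says "this is standard, see Lemma 2.1 of \cite{Elliott1974}") that $\|pqp-p\|<\delta$ with $q$ a projection forces the existence of a subprojection $r\le q$ with $\|r-p\| < f(\delta)$, $f(\delta)\to 0$, and then finish with the polar-decomposition unitary as above. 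Thus $\delta$ is chosen so that $f(\delta)$ is small enough to make the polar-decomposition unitary satisfy $\|u-1\|<\epsilon$.
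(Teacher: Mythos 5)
The paper gives no proof of this lemma --- it simply cites it as standard (Elliott 1974) --- so there is no in-paper argument to compare with. Your route (spectral projection $r$ of $qpq$ under $q$, then a unitary intertwining $p$ and $r$ via the polar decomposition of $z=rp+(1-r)(1-p)$) is the classical one and is correct, but at the key quantitative step, controlling $\|r-p\|$, you go down a couple of dead ends and end by deferring back to the very reference the lemma cites. That step is easy to do directly, and it is the heart of the lemma, so you should not leave it hanging.

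Concretely: with $\|pq-p\|<\delta<1/2$ one has $\|pqp-p\|<\delta$, so the spectrum of $pqp$ in $pCp$ lies in $(1-\delta,1]$; since $pqp=(qp)^*(qp)$ and $qpq=(qp)(qp)^*$ share their nonzero spectrum, the nonzero spectrum of $qpq$ also lies in $(1-\delta,1]$, so $r=\chi_{(1/2,\infty)}(qpq)$ is a genuine projection of $C$ with $r\le q$, and $\|qpq-r\|\le\delta$ by functional calculus. Next, $\|qpq-qp\|=\|qp(1-q)\|\le\|p(1-q)\|=\|(pq-p)^*\|<\delta$, and $\|qp-p\|=\|(pq-p)^*\|<\delta$; putting these together gives $\|r-p\|<3\delta$. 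For the unitary step, note the clean identity $z-1=2rp-p-r=(r-p)(2p-1)$, so $\|z-1\|\le\|r-p\|<3\delta$ (since $2p-1$ is a self-adjoint unitary); in particular $z$ is invertible for $\delta<1/3$, and $z^*z=prp+(1-p)(1-r)(1-p)$ commutes with $p$, so the polar unitary $u=z|z|^{-1}$ indeed satisfies $upu^*=zpz^{-1}=r\le q$. Standard estimates (as you indicate) then give $\|u-1\|=O(\delta)$, so choosing $\delta$ accordingly finishes the proof. One small inaccuracy in your write-up: your claim that $\|r-p\|<\epsilon/2$ alone suffices is a bit loose --- what you actually need is $\delta$ small enough that the resulting $\|u-1\|$ bound (which involves both $\|z-1\|$ and $\||z|^{-1}-1\|$) is below $\epsilon$; but since all the error terms are $O(\delta)$ this is harmless.
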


The next lemma is an observation of R\o rdam (\cite{RorUHF1} section 4).

\begin{lem}
Let $C$ be a unital C*-algebra
and let $x \in C$ be a nilpotent element (i.e., $x^2 = 0$).

Then
$$x \in \overline{GL(C)}$$
where the closure is in the norm topology.
\label{lem:nilpotent}
\end{lem}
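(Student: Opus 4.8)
The plan is to show that a square-zero element $x$ in a unital C*-algebra $C$ lies in the norm closure of the invertibles $GL(C)$ by exhibiting, for each $\epsilon > 0$, an invertible element within $\epsilon$ of $x$. The natural candidate is a perturbation of the form $x + \lambda y$ for a carefully chosen $y$ and small scalar $\lambda$; the key structural fact to exploit is that $x^2 = 0$ forces $\overline{xC} \cap \overline{Cx}$ to sit inside the kernel of left multiplication by $x$, so one can ``invert on complementary pieces.'' Concretely, I would first reduce to the polar decomposition in the bidual: write $x = v|x|$ with $v$ a partial isometry in $C^{**}$, $v^*v$ the support projection of $|x|$ and $vv^*$ the range projection of $x$; the relation $x^2 = 0$ gives $vv^* \le 1 - v^*v$, i.e., the initial and final projections of $v$ are orthogonal.

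The main step is then to approximate $x$ by the invertible element obtained from ``filling in'' the complementary corner. Working (formally) with the three pieces determined by the orthogonal projections $vv^*$, $v^*v$, and $r =_{df} 1 - vv^* - v^*v$, the element $x$ looks like a single off-diagonal entry of a $3 \times 3$ matrix. For $\lambda > 0$ set $u_\lambda =_{df} x + \lambda(v^* + w)$ where $w$ is a partial isometry implementing an equivalence involving $r$ (or, if $r$ contributes a nontrivial piece, one handles it by a separate direct-sum argument, as in Rørdam). One checks that $u_\lambda$ is invertible for every $\lambda > 0$ (its inverse is again of the same block form, bounded by a constant times $1/\lambda$ on the small pieces and $O(1)$ elsewhere), while $\| u_\lambda - x \| = \lambda \| v^* + w \| \le \lambda$. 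Since these computations take place in $C^{**}$, I would be careful to note that $u_\lambda$ actually lies in $C$: this follows because $v^*$ and $w$ can be replaced by norm-approximants from $C$ (using that $|x|^{1/n} \to $ support projection strongly, so $x^* |x|^{-1/2}$-type expressions are approximated by elements of $C$), and a genuine invertible of $C$ within $2\lambda$ of $x$ is produced by a further small perturbation stabilizing invertibility.

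The hard part will be the bookkeeping needed to stay inside $C$ rather than $C^{**}$: the partial isometry $v$ and the ``complementary'' partial isometry $w$ are a priori only in the bidual, so the clean $3\times 3$ matrix picture is only a guide. The honest argument (Rørdam's) avoids polar decomposition altogether: given $\epsilon > 0$, one uses functional calculus on $x^*x$ to write $x = x f(x^*x) + x(1 - f(x^*x))$ with $f$ a continuous cutoff, observes $x(1 - f(x^*x))$ is small, and then directly verifies that $x f(x^*x) + \epsilon\, g(x^*x)^{1/2}$ (a suitable sum with a positive element built from $x^*$ via functional calculus that ``inverts'' on the relevant spectral range) is invertible in $C$ with norm distance $O(\epsilon)$ from $x$; here $x^2 = 0$ is exactly what guarantees the cross terms vanish and the candidate inverse is bounded. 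I would present this functional-calculus version as the actual proof, keeping the matrix picture only as motivation, and conclude that $x \in \overline{GL(C)}$ since $\epsilon$ was arbitrary.
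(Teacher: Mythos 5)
Your proposal overshoots a statement that has a one-line proof, and along the way makes claims that do not actually hold. The point is simply that a nilpotent element has spectrum $\{0\}$: if $x^2=0$ then for every $\epsilon\neq 0$ the finite geometric series $-\epsilon^{-1}(1+\epsilon^{-1}x)$ is a two-sided inverse of $x-\epsilon 1$ (the higher terms vanish because $x^2=0$). Hence $x-\epsilon 1\in GL(C)$ and $\|x-(x-\epsilon 1)\|=|\epsilon|$, so $x\in\overline{GL(C)}$. No polar decomposition, no bidual, no functional calculus is needed, and the argument stays inside $C$ by construction.

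By contrast, your proposed route has real gaps. The candidate $u_\lambda = x+\lambda(v^*+w)$ built from the polar decomposition $x=v|x|$ is not invertible in general: in the two-by-two block picture with respect to $vv^*$ and $v^*v$ it is off-diagonal, and inverting it would require $x$ itself to be bounded below on $v^*vC^{**}$, which fails whenever $0$ is not isolated in the spectrum of $|x|$. The later ``functional-calculus version'' you describe is never actually written down — it is asserted that the cross terms vanish and a bounded inverse exists, but no inverse is exhibited, and it is not clear one exists for the expression you propose. Finally, the difficulty you flag yourself (``staying inside $C$ rather than $C^{**}$'') is a genuine obstruction to your approach and exactly the kind of issue the scalar-perturbation argument sidesteps. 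The observation that $vv^*\perp v^*v$ when $x^2=0$ is correct, but it is a feature of square-zero elements that Rørdam exploits for other purposes (e.g.\ comparing $x^*x$ and $xx^*$), not for this lemma.
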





\noindent\textbf{NOTE:}
For the rest of this section, unless otherwise stated,
$A$ is a unital separable simple $\mathcal{Z}$-stable
C*-algebra with unique
tracial state $\tau$ which is the only normalized quasitrace and
${J}$ is the unique proper C*-ideal of
$M({A} \otimes {\cal K})$ which properly 
contains ${A} \otimes {\cal K}$ (see \ref{MinIdeal} and \ref{Dconscale}).

\begin{lem}\label{L49}
Let $p \in M(A\otimes {\cal K})$ 
be a projection, and
suppose that $a \in p {J} p$ is a nonzero positive element such that
$$2 d_{\tau}(a)  < \tau(p)$$
and
there exists positive $c \in p Jp\setminus A \otimes \K$ such that $c\perp a$.

Then for every $\epsilon > 0$,
there exists a projection $q \in p
{ J} p\setminus p ({A} \otimes {\cal K}) p$
such that
$$d_{\tau}(a) < \tau(q) < d_{\tau}(a) + \epsilon\andeqn
q a \approx_{\epsilon} a.$$
\label{lem:SpecialAU}
\end{lem}

\begin{proof}

For simplicity, let us assume that $\| a \| \leq 1$ and $\epsilon < \min\{1, {\tau(p)-2d_\tau(a)\over{2}}\}$.

Let $h : [0, 1] \rightarrow [0,1]$ be the unique continuous function
such that
\[
h(t) =
\begin{cases}
1 & t \in [\epsilon/64, 1] \\
0 & t \in [0, \epsilon/128] \\
\makebox{linear on  } [\epsilon/128, \epsilon/64]
\end{cases}
\]

Hence,   $p - h(a) \in p(A \otimes \mathcal{K})p$
is a positive element such that
$$\tau(p - h(a)) = \tau(p) - \tau(h(a))
\ge  \tau(p) - d_{\tau}(a) > d_{\tau}(a) + \ep.$$
Also, since $c \in Her(p - h(a))$, $p - h(a) \notin A \otimes \K$.

Hence, by  Proposition \ref{prop:RangeOfTrace},
let $r_1\in J\setminus A\otimes {\cal K}$  be a projection
such that
$$
d_{\tau}(a) < \tau(r_1) < d_{\tau}(a) + \epsilon <\tau(p-h(a)).
$$
    By Theorem \ref{T46}, 
$$
r_1\lesssim p-h(a).
$$

It follows that there is a projection 
 $r \in Her(p - h(a))$ such that 
 $[r]=[r_1].$ In particular, $\tau(r)=\tau(r_1).$
Hence,
$$r \perp (a - \epsilon/32)_+.$$
By Theorem  \ref{StrictComp} again,
$a \preceq r$. Hence,
let $x \in p {J} p$ be an element such that
$$x^* x = (a - \epsilon/8)_+\andeqn
x x^* \leq r.$$
Let
$$x = v |x|$$
be the polar decomposition of $x$ (in $A^{**}$).
Then
$$
(a - \epsilon/8)_+ = |x|^2\andeqn
v |x|^2 v^* \leq r.
$$

Note that since $(a - \epsilon/8)_+ \perp r$,
$x^2 = 0$.
Hence, by Lemma \ref{lem:nilpotent},
$$x \in \overline{GL(p {J}p)}.$$

Hence, by Theorem 5 in   \cite{PedersenUE},
let $u \in p {J}p$ be a unitary such that

$$u (a - 3 \epsilon/4)_+ u^* = v (a - 3 \epsilon/4)_+v^* \leq r.$$

Hence,

$$(a - 3 \epsilon/4)_+ \leq u^* r u.$$

In particular,
$$u^* r u a \approx_{\epsilon} a.$$

Taking $q =_{df} u^* r u$,
we are done.

\end{proof}

\begin{lem}\label{L47}
Let $p \in {J} \setminus ({A} \otimes {\cal K})$ be a projection  and let
$u \in p {J} p$ be a unitary.

Suppose that $p_1 \in p {J}p \setminus p ({A} \otimes {\cal K}) p$ is a projection
such that
$$100 \tau(p_1) < \tau(p).$$

Then for every $\epsilon > 0$ and for  every projection
$q \in J\setminus A \otimes \K$ with $q \perp p$, there exist
projections $e_1, e_2, e_3\in J$ such that
$$p_1 = e_1 < e_2 < e_3 < p + q$$
where the inequalities are strict, and
there exists a unitary $w \in (p+q) {J}(p+q)$
such that
\beq\nonumber
\| w - (u + q) \| < \epsilon\andeqn
w e_1 w^* \leq e_2 \leq w e_3 w^* < p + q.
\eneq
Moreover,
\beq\nonumber
e_2\preceq p+q-e_2.
\eneq
\label{lem:Pre2Diagonals}
\end{lem}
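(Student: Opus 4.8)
The plan is to build $e_2$ and $e_3$ as projections that approximately dominate the relevant conjugates of $p_1$ under $u+q$, and then to absorb the approximation errors into a small perturbation of $u+q$; one cannot simply take $e_2=p_1\vee up_1u^\ast$ because joins of projections need not lie in $M(A\otimes\K)$. The tools are Lemma~\ref{L46} (which, given a positive element and a ``room'' projection outside $A\otimes\K$, produces a projection of controlled trace that approximately dominates it), Lemma~\ref{L44} (Elliott's perturbation lemma, converting approximate domination of a projection into exact domination at the cost of a small unitary conjugation), and, for the trace bookkeeping, Lemma~\ref{L41}, Proposition~\ref{L43}, Corollary~\ref{Extra} and Proposition~\ref{prop:TracialDetermination} (recall $A$ is simple, $\mathcal Z$-stable, of stable rank one, has unique trace $\tau$, so $T(A)$ is a point, and $A\otimes\K$ has strict comparison). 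Write $s=\tau(p_1)$; the hypotheses give $\tau(p)>100s$ and hence $\tau(p+q)>100s$. Fix $0<\epsilon_0<\epsilon$, let $\delta>0$ be the constant from Lemma~\ref{L44} for $\epsilon_0/10$, and pick an auxiliary tolerance $\epsilon_1>0$ small enough that every ``$\approx_{\epsilon_1}$'' below feeds a ``$\approx_{\delta}$'' into Lemma~\ref{L44}. To have room for two applications of Lemma~\ref{L46}, split $q=q_1+q_2$ into two orthogonal projections with $q_1,q_2\notin A\otimes\K$ (possible by strict comparison and the range-of-trace results, since $q$ has positive trace and is big in $J$).

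First I construct $e_2$. Put $a:=p_1+up_1u^\ast\in\big((p+q_1)J(p+q_1)\big)_+$; since $u$ is a unitary of $pJp$ we have $a\le p$, so $a\perp q_1$, and $2d_\tau(a)\le 4s<\tau(p+q_1)$. Thus Lemma~\ref{L46} applies inside $(p+q_1)J(p+q_1)$ with room projection $q_1$ and yields a projection $Q\le p+q_1$, $Q\notin A\otimes\K$, with $\tau(Q)<d_\tau(a)+\epsilon_1\le 2s+\epsilon_1$ and $Qa\approx_{\epsilon_1}a$; since $a\ge p_1$ and $a\ge up_1u^\ast$, a standard functional-calculus estimate shows $Q$ approximately dominates both $p_1$ and $up_1u^\ast$. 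Apply Lemma~\ref{L44} inside $(p+q_1)J(p+q_1)$ to $(p_1,Q)$ to get a unitary $v$ of that corner with $\|v-(p+q_1)\|<\epsilon_0/10$ and $vp_1v^\ast\le Q$; set $e_1:=p_1$ and $e_2:=v^\ast Qv\ (\le p+q_1)$. Then $p_1\le e_2$, $\tau(e_2)=\tau(Q)>d_\tau(a)\ge s$ so $e_1<e_2$ strictly, $\tau(e_2)<\tfrac12\tau(p+q)$, and (as $v\approx p+q_1$) $e_2$ still approximately dominates $up_1u^\ast$. Because $(u+q)^\ast(up_1u^\ast)(u+q)=p_1$ and $(u+q)^\ast(p+q_1)(u+q)=p+q_1$, the projection $(u+q)^\ast e_2(u+q)\le p+q_1$ approximately dominates $p_1$, so Lemma~\ref{L44} applied to $(p_1,(u+q)^\ast e_2(u+q))$ inside $(p+q_1)J(p+q_1)$ gives a unitary $z$ of that corner with $\|z-(p+q_1)\|<\epsilon_0/10$ and $zp_1z^\ast\le(u+q)^\ast e_2(u+q)$. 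Put $z':=z+q_2$ and $w_1:=(u+q)z'$, a unitary of $(p+q)J(p+q)$ with $\|w_1-(u+q)\|=\|z-(p+q_1)\|<\epsilon_0/10$; then $w_1e_1w_1^\ast=(u+q)(zp_1z^\ast)(u+q)^\ast\le e_2$, and $w_1^\ast e_2w_1=z'^\ast(u+q)^\ast e_2(u+q)z'\le p+q_1$.

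Next I construct $e_3$, arranging the final perturbation so that it does not break $w_1e_1w_1^\ast\le e_2$. Both $e_2$ and $w_1^\ast e_2w_1$ are projections of trace $\tau(e_2)$, lie in $p+q_1$, and contain $p_1$ (for $w_1^\ast e_2w_1\ge p_1$, conjugate $w_1e_1w_1^\ast\le e_2$ by $w_1^\ast$). Apply Lemma~\ref{L46} inside $(p+q)J(p+q)$ to $a':=e_2+w_1^\ast e_2w_1\ (\le p+q_1$, hence $\perp q_2)$ with room projection $q_2$ and $2d_\tau(a')\le 4\tau(e_2)<\tau(p+q)$: this gives $Q'\notin A\otimes\K$ with $\tau(Q')<2\tau(e_2)+\epsilon_1<\tau(p+q)$ and $Q'a'\approx_{\epsilon_1}a'$, so $Q'$ approximately dominates $e_2$ and $w_1^\ast e_2w_1$. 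By Lemma~\ref{L44} applied to $(e_2,Q')$ there is a unitary $v'$ of $(p+q)J(p+q)$ with $e_2\le v'^\ast Q'v'=:e_3$; thus $p_1\le e_2<e_3<p+q$ (strict by the trace estimates), and $e_3$ still approximately dominates $w_1^\ast e_2w_1$. Now work in the corner $\big((p+q)-p_1\big)J\big((p+q)-p_1\big)$: there $w_1^\ast e_2w_1-p_1$ is approximately dominated by $e_3-p_1$, so Lemma~\ref{L44} supplies a unitary $z_2'$ of that corner with $\|z_2'-((p+q)-p_1)\|<\epsilon_0/10$ and $z_2'(w_1^\ast e_2w_1-p_1)z_2'^\ast\le e_3-p_1$. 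Set $z_2:=p_1+z_2'$ (a unitary of $(p+q)J(p+q)$ that commutes with $p_1$, $\|z_2-(p+q)\|<\epsilon_0/10$) and $w:=w_1z_2^\ast$. Then $\|w-(u+q)\|<\epsilon_0<\epsilon$; since $z_2$ commutes with $p_1$, $we_1w^\ast=w_1z_2^\ast p_1z_2w_1^\ast=w_1p_1w_1^\ast\le e_2$; and $w^\ast e_2w=z_2(w_1^\ast e_2w_1)z_2^\ast=p_1+z_2'(w_1^\ast e_2w_1-p_1)z_2'^\ast\le p_1+(e_3-p_1)=e_3$, i.e.\ $e_2\le we_3w^\ast<p+q$. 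Finally, $\tau(e_2)<\tfrac12\tau(p+q)<\tau\big((p+q)-e_2\big)$ and both $e_2$ and $(p+q)-e_2$ lie in $J_+\setminus(A\otimes\K)$ (the latter because it dominates $q_2$), so Lemma~\ref{L41} gives $e_2\preceq(p+q)-e_2$.

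The main obstacle is precisely the coordination of the two perturbations: the second one, needed to make $e_2\le we_3w^\ast$ exact, must not undo $we_1w^\ast\le e_2$, which is why $z_2$ is engineered inside the corner cutting out $p_1$ (so that it fixes $p_1$, hence the copy of $p_1$ inside $w_1^\ast e_2w_1$, while still moving $w_1^\ast e_2w_1$ into $e_3$). A subsidiary nuisance is keeping all the auxiliary projections outside $A\otimes\K$ so that Lemmas~\ref{L41} and~\ref{L46} and Proposition~\ref{prop:TracialDetermination} apply in each corner; this is handled by the splitting $q=q_1+q_2$ together with the observation that every projection occurring either dominates $p_1$ or dominates a nonzero sub-projection of $q$, hence lies outside $A\otimes\K$.
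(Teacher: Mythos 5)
Your construction follows essentially the same plan as the paper's: build $e_2$ from Lemma~\ref{L46} so that it contains $p_1$ and approximately dominates $up_1u^\ast$, perturb $u+q$ via Lemma~\ref{L44} to $w_1$ with $w_1e_1w_1^\ast\le e_2$, build $e_3\ge e_2$ approximately dominating $w_1^\ast e_2 w_1$, and then correct by a unitary living in a corner that is fixed on the crucial sub-projection so that the first inequality survives. Cutting out $p_1$ rather than the paper's $w_1 e_1 w_1^\ast$ in the final step is a harmless variant (the relevant identities $we_1w^\ast=w_1e_1w_1^\ast$ and $w^\ast e_2 w=p_1+z_2'(w_1^\ast e_2w_1-p_1)z_2'^\ast$ go through), as is using two orthogonal pieces $q_1,q_2$ of $q$ rather than three.

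There is, however, a genuine gap in the tolerance bookkeeping. You fix a \emph{single} $\delta$ from Lemma~\ref{L44}, namely the one associated to the target accuracy $\epsilon_0/10$, and then choose $\epsilon_1$ so that ``every $\approx_{\epsilon_1}$ feeds a $\approx_\delta$.'' But the errors that feed into the second, third and fourth applications of Lemma~\ref{L44} are not $\epsilon_1$-sized; they carry the $\epsilon_0/10$-sized perturbations produced by the earlier applications. Concretely: after the first application yields $v$ with $\|v-(p+q_1)\|<\epsilon_0/10$ and you set $e_2=v^\ast Qv$, you have $\|e_2-Q\|<\epsilon_0/5$, so the hypothesis of the second application is $(u+q)^\ast e_2(u+q)\,p_1\approx p_1$ only up to $\epsilon_0/5+O(\sqrt{\epsilon_1})$, which you need to be $<\delta=\delta(\epsilon_0/10)$. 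Since $\delta$ from Lemma~\ref{L44} may be much smaller than $\epsilon_0/10$ (it typically is), there is no reason $\epsilon_0/5<\delta$. The same compounding occurs at the transition from $e_3=v'^\ast Q'v'$ to the corner argument for $z_2'$. This is precisely why the paper chains a strictly decreasing sequence $\delta_1<\delta_2<\cdots<\delta_5$, each $\delta_{j-1}$ being the Lemma~\ref{L44} constant for $\delta_j/100$; at each stage the previous perturbation is of the \emph{next} $\delta$'s size, so the process closes up. Your proof can be repaired by introducing such a chain (e.g.\ $\delta_4=\delta(\epsilon_0/10)$, $\delta_3=\delta(\delta_4/100)$, $\delta_2=\delta(\delta_3/100)$, $\delta_1=\delta(\delta_2/100)$, and $\epsilon_1$ so small that the $\sqrt{\epsilon_1}$-errors from Lemma~\ref{L46} are dominated by $\delta_1$), using $\delta_j/100$ rather than $\epsilon_0/10$ as the target in the $j$-th application of Lemma~\ref{L44}. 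Everything else in your argument, including the derivation of $e_2\preceq p+q-e_2$ from Lemma~\ref{L41} and the verification that all auxiliary projections lie outside $A\otimes\mathcal{K}$, is correct.
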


\begin{proof}
By the virtue of Propositions \ref{prop:TracialDetermination} 
and\ref{L43}, replacing $q$ by a subprojection if necessary, we may assume that
$$\tau(q) < (\tau(p) - \tau(p_1))/10^{10000}.$$

By Theorem \ref{StrictComp},  Proposition \ref{prop:RangeOfTrace}
and Proposition \ref{prop:TracialDetermination},
we can decompose $q$ into a direct sum of non-zero projections
$$q = q' + q'' + q'''$$
such that $q', q'', q''' \in J\setminus (A \otimes \K)$.

Let $\dt_5>0$ (in place of $\dt$) associated with $\ep/100$ (in place of $\ep$) 
be given by Lemma \ref{lem:ElliottPerturbation}. Let $\dt_{j-1}>0$ (in place of $\dt$) associated with $\dt_j/100$ (in place of $\ep$) given by
Lemma \ref{lem:ElliottPerturbation}, $j=5,4,3,2,1.$
We may assume that $\delta_5 < \epsilon/100$, and that 
for all $j$, $\dt_{j-1} < \dt_j/100$. 


Take $e_1 =_{df} p_1$.

By hypothesis,
$$\tau(e_1) < \tau(p)/100.$$
Hence,
\beq\nonumber
e_1+ue_1u^*\in pJp\andeqn d_{\tau}(e_1 + u e_1 u^*) < \tau(p)/50.
\eneq


It follows from
Lemma \ref{lem:SpecialAU} that there exists a projection
$q_1 \in (p + q') {J} (p + q')$
such that
\beq\label{L47-n10}
d_{\tau}(e_1 + u e_1 u^*) < \tau(q_1) < d_{\tau}(e_1 + u e_1 u^*) +
\min\{ \delta_1, \tau(p) \}/10^{100},
\eneq
$$q_1 e_1 \approx_{\delta_1} e_1\andeqn
q_1 u e_1 u^* \approx_{\delta_1} u e_1 u^*.$$

By Lemma \ref{lem:ElliottPerturbation} and the definition
of $\delta_1$, we can find a projection
$e_2 \in (p+q') {J} (p+q')$ such that
$$\| e_2 - q_1 \| < \delta_2/50$$
(and hence $e_2 \sim q_1$)
and
$$e_1 \leq e_2.$$

As a consequence, (since $\delta_1 < \delta_2/100$),
$$e_2 (u e_1 u^*) \approx_{\delta_2} u e_1 u^*.$$

It follows from Lemma \ref{lem:ElliottPerturbation}
and the definition of $\delta_2$ that there is a unitary
$w_1 \in (p+q') {J} (p+q')$ such that
$$\| w_1 - (u+q') \| < \delta_3/50$$
and
$$w_1 e_1 w_1^* \leq e_2.$$

Since
$$d_{\tau}(e_2 + w_1^* e_2 w_1)
\leq 2 ( \tau(p)/50 + \min\{ \delta_1, \tau(p)
\}/10^{100} ) \leq \tau(p)/25 + 2 \tau(p)/10^{100},$$
and by Lemma \ref{lem:SpecialAU},
let
$q_2 \in (p+q'+q'') {J} (p+q'+q'')$ be a projection such that
$$d_{\tau}(e_2 + w_1^* e_2 w_1) < \tau(q_2)
< d_{\tau}(e_2 + w_1^* e_2 w_1) + \min \{ \delta_2, \tau(p) \}/10^{100},$$
$$q_2 e_2 \approx_{\delta_2/100} e_2$$
and
$$q_2 w_1^* e_2 w_1 \approx_{\delta_2/100} w_1^* e_2 w_1.$$

By Lemma \ref{lem:ElliottPerturbation} and the definition
of $\delta_2$,
there exists a projection $e_3 \in (p+q'+q'') {J} (p+ q' + q'')$ such that
$$\| e_3 - q_2 \| < \delta_3/50$$
(and hence $e_3 \sim q_2$), and
$$e_2 \leq e_3.$$
Hence,
$$e_3 (w_1^* e_2 w_1) \approx_{\delta_3} w_1^* e_2 w_1.$$
Hence,
$$(w_1+q'') e_3 (w_1+q'')^* e_2 \approx_{\delta_3} e_2.$$

Note that since
$$e_1 \leq e_2 \leq e_3,$$

$$(w_1+q'') e_1 (w_1+q'')^* \leq (w_1+q'') e_3 (w_1+q'')^*.$$
Also, by our construction of $e_2$,
$$(w_1+q'') e_1 (w_1+q'')^* \leq e_2.$$
Hence,
$$(w_1+q'') (e_3 - e_1) (w_1+q'')^* (e_2 - (w_1+q'') e_1 (w_1+q'')^*)
\approx_{\delta_3} (e_2 - (w_1+q'') e_1 (w_1+q'')^*).$$

Hence, by Lemma \ref{lem:ElliottPerturbation} and by
the definition of $\delta_3$,
there exists a unitary
$w'_2 \in (p+q'+q'' - w_1 e_1 w_1^*) {J} (p+q'+q'' - w_1 e_1 w_1^*)$
with
$$\| w'_2 - (p+q'+q'' - w_1 e_1 w_1^*) \| < \delta_4/100$$
and
$$e_2 - w_1 e_1 w_1^* \leq w'_2(w_1+q'') (e_3 - e_1) (w_1+q'')^* w'^*_2.$$

Hence, if we take
$$w =_{df} (w'_2 + w_1 e_1 w_1^*) (w_1+q'') + q'''$$
then
$w \in (p+q) {J} (p+q)$ is a unitary,
\beq\nonumber
&&\| w - (u+q) \| < \epsilon,\,\,\,
p_1 = e_1 < e_2 < e_3 < p,\\
&&\andeqn w e_1 w^* < e_2 < w e_3 w^* < p
\eneq
as required.
Finally, since $q_1\sim e_2,$ by (\ref{L47-n10}) and by applying
Theorem \ref{StrictComp}, we also have
$$
e_2\preceq p+q-e_2.
$$
\end{proof}

\begin{lem} Let $p \in {J} \setminus({A} \otimes {\cal K})$ be a projection
and $u \in p {J} p$ be a unitary.
Let $1/2^{10} > r > 0$.
Then for every projection $q \in J \setminus A \otimes \K$ with
$q \perp p$,
there is a (norm-) continuous path of unitaries
$\{ u(t) : t \in [0,1] \}$ in $U((p+q) {J} (p+q))$ such that
$$
u(0) = u +q, \makebox{  } u(1) = ((p+q-f_2) + u_1)((p+q- f_1) + v_1)
$$
where $v_1 \in U(f_1 {J} f_1)$,
$u_1 \in U(f_2 {J} f_2)$,
$\tau(p +q - f_j) > r \tau(p)$ for 
and $f_j\in (p+q)J(p+q)\setminus  A \otimes \K$ is a projection, $j = 1,2$.
\label{lem:BasisStep}
\end{lem}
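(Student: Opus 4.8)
The plan is to feed the unitary $u+q$ into Lemma~\ref{L47} and then produce the desired factorization by two elementary symmetry homotopies.

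First I would fix the size of the corner. Since $r<1/2^{10}<1/100$, the interval $(r\tau(p),\tau(p)/100)$ is non-empty, so, by the range-of-trace argument used in the proof of Lemma~\ref{L47} (which rests on Proposition~\ref{L43}), choose a projection $p_1\in pJp\setminus p(A\otimes\K)p$ with $r\tau(p)<\tau(p_1)<\tau(p)/100$. Apply Lemma~\ref{L47} to $u,q,p_1$ and $\epsilon'=1$: this produces projections $p_1=e_1<e_2<e_3<p$ and a unitary $w\in U((p+q)J(p+q))$ with $\|w-(u+q)\|<1$, $we_1w^*\le e_2$, and $e_2\preceq p+q-e_2$. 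Since $\|w-(u+q)\|<2$, the unitary $(u+q)^*w$ has spectrum off $-1$, so $t\mapsto(u+q)\exp(ith)$, $h=h^*=\tfrac1i\log((u+q)^*w)$, is a norm-continuous path in $U((p+q)J(p+q))$ from $u+q$ to $w$; it remains to connect $w$ to the target product. Note that $e_2\le p$ forces $p+q-e_2\ge q$, so $p+q-e_2\notin A\otimes\K$; also $e_2\preceq p+q-e_2$ gives $\tau(e_2)\le\tau(p+q)/2$, hence $\tau(e_1)=\tau(p_1)<\tau(p)/2\le\tau(p+q-e_2)$.

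The central step is to homotope $w$ to a unitary commuting with $e_1$, by sliding $we_1w^*$ onto an orthogonal copy of $e_1$. As in the proof of Lemma~\ref{L47}, choose a projection $e_1'\le p+q-e_2$ with $\tau(e_1')=\tau(e_1)$ and $e_1'\notin A\otimes\K$; then $e_1'\perp e_2\ge we_1w^*$ and, because $e_1\le e_2$, also $e_1'\perp e_1$. Moreover $we_1w^*\sim e_1$ via $w$, and $e_1'\sim e_1$ by Proposition~\ref{prop:TracialDetermination} (here $A$ has stable rank one, being ${\cal Z}$-stable), the implementing partial isometries lying in $(p+q)J(p+q)$ because they are cut down by projections in $J$. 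Let $s'\in U((p+q)J(p+q))$ be the self-adjoint unitary that interchanges $we_1w^*$ and $e_1'$ and equals $p+q-we_1w^*-e_1'$ off these, and put $w':=s'w$; then $w'e_1(w')^*=s'(we_1w^*)s'=e_1'$. Next let $s''\in U((p+q)J(p+q))$ be the self-adjoint unitary interchanging $e_1'$ and $e_1$ and equal to $p+q-e_1'-e_1$ off these; then $(s''w')e_1(s''w')^*=s''e_1's''=e_1$, so $s''w'$ commutes with $e_1$ and therefore
\[
s''w'=u_1+v_1=\big((p+q-e_1)+u_1\big)\big(e_1+v_1\big),\qquad u_1:=e_1(s''w')e_1,\quad v_1:=(p+q-e_1)(s''w')(p+q-e_1),
\]
with $u_1\in U(e_1Je_1)$ and $v_1\in U((p+q-e_1)J(p+q-e_1))$.

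Finally, any self-adjoint unitary $\sigma$ in a unital $C^*$-algebra $D$ is joined to $1_D$ by the path $t\mapsto\exp\!\big(i\pi(1-t)\tfrac{1-\sigma}{2}\big)$; applying this to $s'$ and to $s''$, and using that $w=s'w'$ and $w'=s''(s''w')$, produces norm-continuous paths in $U((p+q)J(p+q))$ from $w$ to $w'$ and from $w'$ to $s''w'$. Concatenating these with the short path from $u+q$ to $w$ gives the required path, ending at $\big((p+q-f_2)+u_1\big)\big((p+q-f_1)+v_1\big)$ with $f_2:=e_1$ and $f_1:=p+q-e_1$; indeed $f_2=p_1\notin A\otimes\K$, $f_1=p+q-e_1\ge q\notin A\otimes\K$, and $\tau(p+q-f_1)=\tau(e_1)=\tau(p_1)>r\tau(p)$ while $\tau(p+q-f_2)=\tau(p+q-e_1)>r\tau(p)$. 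The point to watch is that $\tau(p_1)$ must simultaneously satisfy Lemma~\ref{L47}'s hypothesis $100\tau(p_1)<\tau(p)$ and the bound $\tau(p_1)>r\tau(p)$ — which is exactly why the hypothesis $r<1/2^{10}$ is imposed — and that every auxiliary projection must be kept outside $A\otimes\K$ so that Propositions~\ref{prop:TracialDetermination} and~\ref{L43} apply; past the single initial perturbation all homotopies are exact, so there is no further $\epsilon$-$\delta$ bookkeeping.
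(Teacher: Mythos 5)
Your proof is correct, and it takes a genuinely cleaner route to the same endpoint.  Both you and the paper begin by choosing $p_1$ with $r\tau(p)<\tau(p_1)<\tau(p)/100$ and feeding it into Lemma~\ref{lem:Pre2Diagonals}; the difference lies in how the resulting unitary $w$ is converted into the required two-factor form.  The paper exploits the full chain $e_1<e_2<e_3$: it builds a block-diagonal unitary $z=w_2\oplus w_4$ (with $w_2\in U(e_2Je_2)$, $w_4\in U((p+q-e_2)J(p+q-e_2))$) for which $ze_1z^*=we_1w^*$, so that $y:=z^*w$ commutes with $e_1$, and then rewrites each of $z$ and $y$ as a rotation-trick ``commutator'' times a corner unitary, ending with factors $(e_2+u_1)$ and $(e_1+v_1)$ whose active corners $p+q-e_2\subsetneq p+q-e_1$ overlap.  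You instead use only $we_1w^*\le e_2$ and $e_2\preceq p+q-e_2$, produce a copy $e_1'$ of $e_1$ under $p+q-e_2$ (hence orthogonal to both $e_1$ and $we_1w^*$), and write $z^{-1}=s''s'$ as a product of two symmetries that slide $we_1w^*$ back to $e_1$; since each symmetry has an explicit exponential path to $p+q$, the terminal unitary is simply the diagonal of $s''s'w$, giving the orthogonal pair $f_2=e_1$, $f_1=p+q-e_1$.  Your version dispenses with the auxiliary splitting $q=q'\oplus q''$, with $e_3$, and with the need to track the ``outer'' blocks $e_3-e_2$ and $p+q-e_3$ at all; both versions rest on the same underlying ingredients (Lemma~\ref{lem:Pre2Diagonals}, Cuntz comparison to obtain partial isometries in $J$, Proposition~\ref{prop:TracialDetermination}) and both factorizations are equally usable in the induction of Lemma~\ref{lem:smallprojections}.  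The only slip is cosmetic: Lemma~\ref{lem:Pre2Diagonals} gives $e_3<p+q$, not $e_3<p$ as you wrote, but since you never use $e_3$ this has no effect.
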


\begin{proof}

By Proposition \ref{prop:RangeOfTrace} and Theorem \ref{StrictComp},
there is a projection  $p_1 \in p {J} p \setminus p({A} \otimes {\cal K})p$
such that
$\tau(p_1) > r \tau(p)$ and $100 \tau(p_1) < \tau(p)$
(equivalently,
$\beta < \tau(p_1) < \tau(p)/100$ for some $\beta < \tau(p)/2^{10}$.)

Since $q \in J \setminus (A \otimes \K)$ is a projection, by Propositions
\ref{prop:RangeOfTrace} and \ref{prop:TracialDetermination},
we can decompose
$q$ into orthogonal projections
$$q = q' \oplus q''$$
where $q', q'' \in J\setminus (A \otimes \K)$.

It follows from  Lemma \ref{lem:Pre2Diagonals} that
there exists a unitary $w_0 \in (p+q'){J}(p+q')$  with
$$\| w_0 - (u+q') \| < \epsilon/10000$$
(which implies that $w_0$ is path-connected to $u + q'$ in
$(p+q') {J}(p+q')$)
and
there are projections  $e_1, e_2, e_3 \in (p+q'){J} (p+q')\setminus ({A} \otimes {\cal K})$
such that
$$p_1 = e_1 < e_2 < e_3 <  p+q' < p+q$$
(where the inequalities are strict),
and
$$w_0 e_1 w_0^* < e_2 < w_0 e_3 w_0^* < p+q' < p+q.$$
Moreover,
\beq\label{L48-n1}
e_2\preceq p+q-e_2.
\eneq
It follows that
\beq\label{L48-n2}
e_1\preceq p+q-e_1.
\eneq
Define $w=w_0+q''.$
Hence, we have that
\beq\label{L48-n0}
\| w - (u + q) \| < \epsilon/10000,\\
p_1 = e_1 < e_2 < e_3 < e_4 =_{df} p+q,\\
w e_1 w^* < e_2 < w e_3 w^* < p+q,
\eneq
and there exists a positive $c \in (p+q) J (p+q) - (A \otimes \K)$ such that
$$c \perp e_3 + w e_3 w^*.$$

From the above and by Proposition \ref{prop:TracialDetermination},
\beq\nonumber
&&e_1 \sim w e_1 w^*,\,\,\,
e_2 - e_1 \sim e_2 - we_1 w^*, \\\nonumber
&&e_3 - e_2 \sim  w e_3 w^* - e_2 \andeqn
p+q - e_3 \sim p+q - w e_3 w^*.
\eneq
Recall that $e_4 =_{df} p+q$.
Put $e_0 =_{df} 0$.
Hence, for $j= 1, 2$,
there is a unitary  $w_{2j} \in (e_{2j} - e_{2j - 2}) {J} (e_{2j} - e_{2j - 2})$
such that
$$w_{2j} (e_{2j-1} - e_{2j-2}) w_{2j}^* =
w e_{2j-1} w^* - e_{2j-2}.$$

Define
$$z =_{df} w_2 \oplus w_4\andeqn y =_{df} z^* w,$$
two  unitaries  in
$(p+q) {J} (p+q).$
Then
$w = z y.$
Clearly, by definition,
\beq\nonumber
(e_{2j} - e_{2j-2}) z = w_{2j} =  z (e_{2j} - e_{2j-2})\andeqn\\\nonumber
\tau(e_2 - e_0) = \tau(e_2) \geq \tau(e_1) > r \tau(p).
\eneq
Also, since
$$z e_1 z^* = w e_1 w^*,$$
$$e_1 = z^* w e_1 w^* z.$$
In other words,
\beq\nonumber
e_1 = y e_1 y^*\,\,\,{\rm or}\,\,\,
e_1 y = y e_1.
\eneq
Hence, $$y = y_1 \oplus y_3$$
where $y_1 \in e_1 {J} e_1$ is a unitary and
$y_3 \in (p+q - e_1) {J} (p+q - e_1)$ is a unitary.
Moreover,
\beq\label{L48-n3}
\tau(e_2)>\tau(e_1) =\tau(p_1) > r \tau(p).
\eneq
 By (\ref{L48-n1}) and (\ref{L48-n2}),
 there are partial isometries $Z_1, Z_2\in (p+q)J(p+q)$ such that
 $$
 Z_i^*Z_i=e_i,\,\,\, Z_iZ_i^*=_{df}e_i'\le p+q-e_i,\,\,\, i=1,2.
 $$
 Define a unitary $U_i\in (p+q)J(p+q)$ by
 $$
 U_i=Z_i+Z_i^*+(p+q - e_i -e_i'),\,\,\,i=1,2.
 $$
 We may write
 \beq\label{L48-n4}
 w=zy=(w_2\oplus U_2w_2^*U_2^*U_2w_2U_2^*w_4)(y_1\oplus U_1y_1^*U_1^*U_1y_1U_1^*y_3).
 \eneq
(In the above, we are simplifying notation by using ``$U_2 w_2 U_2^*$" 
to denote 
$U_2 w_2 U_2^*  + (p + q -e_2 - e'_2)$, a unitary in 
$(p+q - e_2) J (p+q - e_2)$. Similarly, we are
using ``$U_1 y_1 U_1^*$" to denote $U_1 y_1 U_1^* + (p + q - e_1 - e'_1)$.)

Put $f_i=e_i,$ $i=1,2.$  So $f_i\in (p+q)J(p+q)\setminus A\otimes {\cal K}$ are projections.
$$u_1=U_2w_2U_2^*w_4\in (p+q-f_2)J(p+q-f_2)\andeqn v_1=U_1y_1U_1^*y_3\in (p+q-f_1)J(p+q-f_1).$$
Then, by (\ref{L48-n4}) and (\ref{L48-n0}),  $u+q$ is connected to
$$
(e_2+u_1)(e_1+v_1)
$$
via a norm continuous path in $U(p+q)J(p+q)).$
Moreover, we compute that, by (\ref{L48-n3}), (\ref{L48-n1}) and
(\ref{L48-n2}),
$$
\tau(p+q-f_i) \geq \tau(e_i)>r\tau(p),\,\,\, i=1,2.
$$

\end{proof}

\begin{lem}
 Let $p \in {J} \setminus ({A} \otimes {\cal K})$ be a projection
and $u \in p {J} p$ be a unitary.
Let $1 > r > 0$.
Then for every projection $q \in J\setminus A\otimes \K$ with $q \perp p$,
there is an integer $m\ge 1$ and a (norm-) continuous path of unitaries
$\{ u(t) : t \in [0,1] \}$ in $U((p+q) {J}(p+q))$ such that
$$u(0) = u + q, \makebox{  } u(1) = ((p+q-f_1) + u_1)((p+q- f_2) + u_2) ...
((p+q - f_m) +u_m) $$

where $u_j \in U(f_j {J} f_j)$,
$\tau(p+q - f_j) > r \tau(p)$, and
$f_j \in (p+q)J(p+q)\setminus  A \otimes \mathcal{K}$ is a projection,  
$j = 1,2, ..., m$.
\label{lem:smallprojections}
\end{lem}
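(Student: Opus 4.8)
The plan is to iterate Lemma \ref{lem:BasisStep}. That lemma is precisely the present statement in the special case $m = 2$, $0 < r < 1/2^{10}$: for $u \in U(pJp)$ and a projection $q \perp p$ in $J \setminus A\otimes\K$ it produces a norm-continuous path in $U((p+q)J(p+q))$ from $u+q$ to a product $F_1 F_2$ of two unitaries, each of the form $(p+q-g) + w$ with $w \in U(gJg)$, $g \in (p+q)J(p+q)\setminus A\otimes\K$ a projection, and ``defect'' $\tau(p+q-g) > r\tau(p)$. To reach an arbitrary $r \in (0,1)$ one re-applies Lemma \ref{lem:BasisStep} to the individual factors. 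The key point is that if $F = (p+q-g)+w$ is such a factor and one applies Lemma \ref{lem:BasisStep} to $w \in U(gJg)$ (with $g$ in the role of $p$) using, in the role of the auxiliary projection, a \emph{very small} subprojection $q^* \leq p+q-g$ with $q^* \notin A\otimes\K$, then, after extending the resulting path in $U((g+q^*)J(g+q^*))$ by the identity on $(p+q)-(g+q^*)$, one replaces $F$ inside $U((p+q)J(p+q))$ by a product of two factors of the same shape, each now with defect at least
\[
\tau(p+q-g) + r_0\,\tau(g) - \tau(q^*),
\]
where $r_0 = 1/2^{11}$; borrowing a tiny $q^*$ and returning it ``with interest'' $r_0\tau(g)$ thus raises the defect by a definite amount.

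In detail, fix $r \in (0,1)$, put $r_0 = 1/2^{11}$ and $\delta_0 = r_0(1-r)\tau(p)/2 > 0$. Starting from the output $F_1 F_2$ of Lemma \ref{lem:BasisStep} (applied to $u$, $q$, with the parameter there taken to be $r_0$), run the following procedure: at each stage, for every current factor $F_j = (p+q-g_j)+u_j$ whose defect satisfies $\tau(p+q-g_j) \leq r\tau(p)$ (so that $\tau(g_j) = \tau(p+q) - \tau(p+q-g_j) \geq (1-r)\tau(p) > 0$), choose a projection $q_j^* \leq p+q-g_j$ with $q_j^* \notin A\otimes\K$ and $\tau(q_j^*) = \delta_0$. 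Such a $q_j^*$ exists because $p+q-g_j$ is not in $A\otimes\K$ (it dominates a projection produced by an earlier application of Lemma \ref{lem:BasisStep}) and $\delta_0 < r_0\tau(p) < \tau(p+q-g_j)$, so by Propositions \ref{prop:RangeOfTrace} and \ref{prop:TracialDetermination} and Lemma \ref{L41} a subprojection of $p+q-g_j$ of trace $\delta_0$ that is not in $A\otimes\K$ is available. Applying Lemma \ref{lem:BasisStep} to $u_j \in U(g_jJg_j)$ with this $q_j^*$ (and $r_0$) and extending by the identity as above, $F_j$ is replaced, along a path in $U((p+q)J(p+q))$, by a product of two factors of the required form, each with defect $> \tau(p+q-g_j) + r_0\tau(g_j) - \delta_0 \geq \tau(p+q-g_j) + r_0(1-r)\tau(p) - \delta_0 = \tau(p+q-g_j) + \delta_0$. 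Since every defect is bounded above by $\tau(p+q) < \infty$ and increases by at least $\delta_0$ whenever a factor is processed, every lineage of factors is processed at most $\lceil r\tau(p)/\delta_0 \rceil + 1$ times, so the procedure halts after finitely many stages with a finite list $F_1, \ldots, F_m$, all of defect $> r\tau(p)$. Concatenating the paths one factor at a time (modify $F_1$ keeping the others fixed, then $F_2$, and so on, each concatenation being a path in $U((p+q)J(p+q))$) and splicing onto the initial path, one obtains the asserted path from $u+q$ to $\prod_{j=1}^m F_j$.

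The only point requiring care is the bookkeeping that keeps the defect increments uniformly bounded below by $\delta_0$: this rests on the estimate $\tau(g_j) \geq (1-r)\tau(p)$, which is valid exactly as long as $F_j$ is still being processed (its defect has not yet exceeded $r\tau(p)$), and on the fact that the ``reservoir'' of small projections not in $A\otimes\K$ inside each complement $p+q-g_j$ never runs dry, which holds because every such complement dominates a projection not in $A\otimes\K$ built by Lemma \ref{lem:BasisStep}. The threading of the finitely many paths into a single continuous path, and the verification that each intermediate and final factor is trivial off a projection in $(p+q)J(p+q)\setminus A\otimes\K$ (immediate from the corresponding property in Lemma \ref{lem:BasisStep}), are routine.
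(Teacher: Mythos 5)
Your argument is correct and rests on the same engine the paper uses, namely iterating Lemma \ref{lem:BasisStep}, but the bookkeeping and termination mechanism are genuinely different. The paper fixes $s = (3/4)2^{-10}$, picks $n$ with $1-(1-s)^n > r$, splits the auxiliary $q$ into $n$ mutually orthogonal pieces $q_1,\ldots,q_n \in J\setminus(A\otimes\K)$ in advance, and then runs a clean binary-tree induction: at stage $k$, each of the $2^{k-1}$ current factors $u_j \in U(f_jJf_j)$ is split by Lemma \ref{lem:BasisStep} using $q_k$ as the common auxiliary (all the paths being concatenated one factor at a time), and the estimate $\tau(f_j) < \tau\bigl(\sum_{l\leq k}q_l\bigr) + (1-s)^k\tau(p)$ propagates deterministically, yielding $\tau(p+q-f_j) > r\tau(p)$ after exactly $n$ rounds with $m = 2^n$. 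Your version is a greedy variant: rather than pre-decomposing $q$, you carve a small auxiliary $q_j^*$ of fixed trace $\delta_0$ out of the defect projection $p+q-g_j$ of whichever factor is currently being processed, apply Lemma \ref{lem:BasisStep} with that auxiliary, and establish termination by the fixed-increment estimate that each processing step raises the defect by at least $\delta_0$. Both are correct; the paper's route gives an a priori value of $m$ and avoids the reservoir/termination discussion, while yours is self-stopping and does not require any decomposition of $q$ up front. Your remark that the reservoir never runs dry because each defect projection $p+q-g_j$ dominates a projection not in $A\otimes\K$ is right, but since this is a property visible only in the \emph{proof} of Lemma \ref{lem:BasisStep} (the defect projections there contain $p_1 \notin A\otimes\K$), not in its \emph{statement}, it would be worth making that dependence explicit in a final write-up.
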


\begin{proof}
There is an integer $n\ge 1$ such that
$1-(1-s)^n>r,$ where $s=(3/4)2^{-10}.$
By Theorem \ref{StrictComp}, Proposition \ref{prop:TracialDetermination}
and Proposition \ref{prop:RangeOfTrace},
there are mutually orthogonal projections 
$q_l\in qJq\setminus A\otimes {\cal K},$ $l=1,2,...,n$, 
such that $q=\sum_{l=1}^n q_l.$

It suffices to prove the following statement:\\ 

There are projections
\beq\label{L49-n1}
&&f_1, f_2,...,f_{2^k}\in (p+\sum_{j=1}^kq_j)J(p+\sum_{j=1}^kq_j)\setminus A\otimes {\cal K},\\\label{L49L-n2}
&&{\rm unitaries}\,\,\, u_i\in U(f_jJf_j)\,\,\,{\rm such\,\,\, that}\,\,\, \tau(f_j)<\tau(\sum_{j=1}^kq_j)+(1-s)^k\tau(p),
\eneq
$j=1,2,..,k,$
and $u$ is connected to $\prod_{j=1}^{2^k}((p+q-f_j)+u_j)$  by a norm continuous path of unitaries in $(p+q)J(p+q)$ for
all $1\le k\le n.$

%
We prove this  by induction on $k$.

The case $k = 1$
follows immediately from Lemma \ref{lem:BasisStep} with $q_1$ in place of $q$
and $s$ in place of $r.$

Suppose that we have proven the statement for $k$.
We now prove the statement for $k + 1$.


Now for $1 \leq j \leq k$,
apply Lemma \ref{lem:BasisStep} to $u_j$ (in place of $u$), $f_j$ (in place of $p$),  $s$ (in place of $r$) and $q_{k+1}$ (in place of $q$) to get
projections $f_{j,l}\in (f_j+q_{k+1})J(f_j+q_{k+1})\setminus A\otimes {\cal K}$ ($l=1,2$) and unitaries  $u_{j,l}\in f_{j,l}Jf_{j,l}$
such that
$u_j + q_{k+1}$ is connected to 
$(f_j + q_{k+1}- f_{j,1}+u_{j,1})(f_j+q_{k+1} - f_{j,2} +u_{j,2})$
by a norm continuous path of unitaries in $(f_j+q_{k+1})J(f_j+q_{k+1})$ and
\beq\label{L49-n10}
\tau(f_j+q_{k+1}-f_{j,l})>s\tau(f_j).
\eneq
From this and the induction hypothesis, it follows that
\beq\label{L49-n11}
\tau(f_{j,l}) &<&\tau(f_j)+\tau(q_{k+1})-s\tau(f_j)\\
&=&\tau(q_{k+1})+(1-s)\tau(f_j)\\
&<&\sum_{j=1}^{k+1}\tau(q_j)+(1-s)^{k+1}\tau(p).
\eneq
The statement above holds for $k+1$  by renaming the
$f_{j,l}s$ and $u_{j,l}s$
which completes the induction.
\end{proof}

\begin{lem}
Let $p, q, r, s \in {J}\setminus {A} \otimes {\cal K}$ be  projections such that
\beq\nonumber
q, r, s \leq p\\
\{ q,r \} \perp s \\
\andeqn
\tau(q) \leq \tau(s) \leq \tau(r).
\eneq
Suppose that $u \in q J q$ is a unitary.
Then there is a continuous path of unitaries $\{u(t): t\in [0,1]\}\subset pJp$ such that
$$
u(0)=u+(p-q)\andeqn u(1)=v+(p-r),
$$
where
where $v \in r J r$ is a unitary.
\label{lem:movingprojections}
\end{lem}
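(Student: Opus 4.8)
The plan is to isolate a single ``rotation'' move and use it twice, with the projection $s$ serving as an auxiliary slot that is orthogonal to both $q$ and $r$. Concretely, I would first establish the following. \textbf{Claim.} If $e,f\in {J}\setminus(A\otimes\K)$ are projections with $e,f\le p$, $e\perp f$ and $\tau(e)\le\tau(f)$, and if $w_0\in U(eJe)$, then $w_0+(p-e)$ is joined by a norm-continuous path of unitaries in $U(pJp)$ to some $w_1+(p-f)$ with $w_1\in U(fJf)$. Granting the Claim, the lemma follows by two applications: first apply it with $(e,f)=(q,s)$ --- valid since $q\perp s$, $q,s\le p$ and $\tau(q)\le\tau(s)$ --- to connect $u+(p-q)$ to some $u_s+(p-s)$ with $u_s\in U(sJs)$; then apply it with $(e,f)=(s,r)$ --- valid since $s\perp r$, $s,r\le p$ and $\tau(s)\le\tau(r)$ --- to connect $u_s+(p-s)$ to some $v+(p-r)$ with $v\in U(rJr)$. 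Concatenating the two paths yields the desired $\{u(t):t\in[0,1]\}\subset U(pJp)$.

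To prove the Claim I would argue as in the earlier lemmas of this section. Since $A$ has a unique tracial state, the hypothesis in Lemma \ref{L41} that the relevant element induces a continuous function on $T(A)$ is automatic, so Lemma \ref{L41} (when $\tau(e)<\tau(f)$) together with Proposition \ref{prop:TracialDetermination} (when $\tau(e)=\tau(f)$), using also that $A\otimes\K$ has stable rank one, produces a subprojection $f_1\le f$ and a partial isometry $x$ with $x^*x=e$ and $xx^*=f_1$. The corner $(e+f_1)J(e+f_1)$ is $*$-isomorphic to $M_2(eJe)$ via $e\leftrightarrow e_{11}$, $f_1\leftrightarrow e_{22}$, $x\leftrightarrow e_{21}$; under this identification $w_0+f_1\leftrightarrow\mathrm{diag}(w_0,1)$ and $e+xw_0x^*\leftrightarrow\mathrm{diag}(1,w_0)$ (with $1$ the unit $e$ of $eJe$). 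With $R(t)=\left(\begin{smallmatrix}\cos t & -\sin t\\ \sin t & \cos t\end{smallmatrix}\right)$, the path $t\mapsto R(t)\,\mathrm{diag}(w_0,1)\,R(t)^*$, $t\in[0,\pi/2]$, lies in $U(M_2(eJe))$ and runs from $\mathrm{diag}(w_0,1)$ to $\mathrm{diag}(1,w_0)$. Transporting it back to $(e+f_1)J(e+f_1)$ and adjoining the fixed projection $(f-f_1)+(p-e-f)$ gives a norm-continuous path of unitaries in $U(pJp)$ from $w_0+(p-e)$ to $xw_0x^*+(p-f_1)$; setting $w_1:=xw_0x^*+(f-f_1)\in U(fJf)$ rewrites the endpoint as $w_1+(p-f)$, which proves the Claim.

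Since the whole argument is a Whitehead-type rotation, I do not expect a genuine obstacle; the only substantive ingredients are the comparison facts for projections in ${J}\setminus(A\otimes\K)$ (Lemma \ref{L41} in the strict case, Proposition \ref{prop:TracialDetermination} in the equality case). The point needing a little attention is why one must detour through $s$ at all: the hypotheses supply $q\perp s$ and $r\perp s$ but not $q\perp r$, so in general $u$ cannot be rotated directly from $q$ into $r$; routing through $s$, which by $\tau(q)\le\tau(s)\le\tau(r)$ is simultaneously large enough to absorb $q$ and small enough to be absorbed by $r$, is exactly what makes both invocations of the Claim legitimate, and one only has to check that all the intermediate subprojections and rotation paths stay inside $pJp$.
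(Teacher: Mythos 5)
Your proposal is correct and follows essentially the same route as the paper: the paper's proof is exactly a two-stage Whitehead rotation, first carrying $u$ from the $q$-corner into the $s$-corner (using $q\perp s$ and $q\preceq s$) and then from the $s$-corner into the $r$-corner (using $s\perp r$ and $s\preceq r$), with the required subequivalences supplied by the tracial comparison results earlier in the section. Your version merely makes the $M_2$ identification and rotation path explicit where the paper writes them compactly in block-diagonal notation, so there is nothing substantively different.
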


\begin{proof}

Since $s \perp q$ and $q \preceq s$,
we may view $u + (p - q)$ as
$u + (p - q) = diag(u, s, p - q - s)
= diag( u, u^* u  , p - q - s )$
(with coordinates in $(q, s, p - q - s)$)
which is norm path connected to
$diag(q, u, p - q - s)$, which has the
form $w + (p - s)$ for some unitary
$w \in s J s$.

But since $s \perp r$ and $s \preceq r$,
$w + (p-s)$
can be viewed
as
$w + (p-s) = diag(w, r, p - s - r) =
diag(w, w^* w, p - s - r)$
(with coordinates in $(s, r, p - s - r)$)
which is norm path connected to
$diag(s, w, p - s - r)$, which has the
form $v + (p - r)$
for some unitary
$v \in rJr$.
\end{proof}

\begin{lem}\label{cornerunit}
Let $p \in {J}\setminus {A} \otimes {\cal K}$ be a projection
and let $q \in {J}\setminus {A} \otimes {\cal K}$ be another projection
with $p \perp q$
and $q \preceq p$.

Let $u \in p {J} p$ be a unitary.
Then there exists a continuous path of unitaries $\{u(t): t\in [0,1]\}\subset U((p+q)J(p+q))$ such that
\beq\label{L411-n1}
u(0)=u+q\andeqn u(1)=p+v,
\eneq
where $v\in U(qJq).$
\end{lem}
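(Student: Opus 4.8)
The goal is to connect $u+q$ to an element of the form $p+v$ inside $U((p+q)J(p+q))$. The previous lemma \ref{lem:smallprojections} does most of the heavy lifting: applying it with $r$ chosen so that $1-(1-r)>1/2$ would not be enough by itself, so I would rather apply it for a value of $r$ close to $1$ (say $r > 1/2$, or even closer to $1$) to conclude that $u+q$ is norm-path-connected to a finite product
$$
u(1) = \prod_{j=1}^m ((p+q-f_j)+u_j)
$$
where each $u_j \in U(f_jJf_j)$, each $f_j \in (p+q)J(p+q)\setminus A\otimes\K$ is a projection, and $\tau(p+q-f_j) > r\tau(p)$ for all $j$. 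The point of the large $r$ is that the ``defect'' projections $p+q-f_j$ are each \emph{large} — of trace more than $r\tau(p)$ — and hence, since $\tau(q)\le\tau(p)$, we have $\tau(q)<\tau(p+q-f_j)$, which (using \ref{prop:RangeOfTrace}, \ref{prop:TracialDetermination}, \ref{lem:StrictComp}) lets us fit a copy of $q$ orthogonally inside each $p+q-f_j$.

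**Key steps.** First I would invoke \ref{lem:smallprojections} as above to reduce to the product $u(1)=\prod_{j=1}^m((p+q-f_j)+u_j)$, noting this is connected to $u+q$ by a norm-continuous path in $U((p+q)J(p+q))$. Second, I would handle the product one factor at a time, absorbing the ``$+q$'' piece through each factor: the key sub-move is that a single unitary of the form $(p+q-f_j)+u_j$, where $u_j$ lives in the small corner $f_jJf_j$ and the complementary projection $p+q-f_j$ is large, can be connected to one of the form $(\text{small projection}) + (\text{unitary in the complementary large corner})$ — essentially a rotation of $u_j$ into a far corner, which is exactly the content of \ref{lem:movingprojections} (moving a unitary from a small projection to a larger orthogonal one). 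Iterating, I can slide all the $u_j$'s onto mutually orthogonal large pieces and eventually onto a single corner; along the way one arranges that a fixed subprojection equivalent to $q$ is left untouched, so that at the end the ``active'' part of the unitary lives in $pJp$ (after conjugating by a partial isometry realizing $q\preceq p$, which is continuously connected to the identity on the relevant corner), while $q$ itself carries the trivial unitary. Concretely: using $q\preceq p$ fix a subprojection $q_0\le p$ with $q_0\sim q$ and a partial isometry implementing this; the path of unitaries in \ref{lem:movingprojections} (applied inside $(p+q)J(p+q)$ with appropriate choices of the orthogonal projections) rotates $u$ across to $p+v$ with $v\in U(qJq)$.

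**Main obstacle.** The genuinely delicate point is bookkeeping: \ref{lem:smallprojections} produces $f_j$'s that are not a priori mutually orthogonal, nor orthogonal to a fixed copy of $q$, so one must use the trace estimates $\tau(p+q-f_j)>r\tau(p)$ together with \ref{prop:TracialDetermination} and \ref{lem:StrictComp} to repeatedly find the room — orthogonal subprojections of the right size — inside $(p+q)J(p+q)$ needed to apply \ref{lem:movingprojections}. Getting $r$ large enough that all these comparisons go through simultaneously for the finitely many $j$'s, and verifying that each application of \ref{lem:movingprojections} keeps us inside $U((p+q)J(p+q))$ with the path being norm-continuous and composable, is where the real care is needed; the rest is routine concatenation of paths. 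Once the product is collapsed so that the only nontrivial unitary part sits in a corner equivalent to (and then identified with) $q$, formula (\ref{L411-n1}) follows, and by construction $v\in U(qJq)$.
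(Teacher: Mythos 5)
Your choice of tools and overall strategy — use Lemma \ref{lem:smallprojections} to factor $u+q$ into pieces with small ``active'' corners $f_j$, then slide each $u_j$ across via Lemma \ref{lem:movingprojections} — is exactly the paper's approach, but the quantitative bookkeeping does not close, and a key preliminary step is missing. Applying Lemma \ref{lem:smallprojections} with $q$ itself and a large $r$ gives $\tau(p+q-f_j) > r\tau(p)$, hence only $\tau(f_j) < (1-r)\tau(p) + \tau(q)$, and this bound never drops below $\tau(q)$ however close $r$ is to $1$. So you cannot conclude $f_j$ is Cuntz-subequivalent to $q$ (let alone to a proper piece of $q$ with room to spare), which is what you would need to move $u_j$ into $q$ via Lemma \ref{lem:movingprojections}. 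Your ancillary claim ``$\tau(q) \le \tau(p)$, hence $\tau(q) < \tau(p+q-f_j)$'' needs $r > \tau(q)/\tau(p)$, which can fail: the hypothesis $q \preceq p$ allows $\tau(q) = \tau(p)$. And your concluding ``concrete'' move — rotating $u$ directly from $p$ to $q$ by Lemma \ref{lem:movingprojections} — cannot be set up: that lemma needs an intermediate projection $s \le p+q$ orthogonal to both the source and the target, and nothing in $p+q$ is orthogonal to both $p$ and $q$; moreover it requires $\tau(\text{source}) \le \tau(s) \le \tau(\text{target})$, which would force $\tau(p) \le \tau(q)$, the wrong direction.

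The missing idea is to first split $q$ into three pieces so that there is genuine room. Using Proposition \ref{prop:RangeOfTrace}, Proposition \ref{prop:TracialDetermination} and Lemma \ref{lem:StrictComp}, write $q = q' + q'' + q'''$ with $q', q'', q''' \in J \setminus (A \otimes \K)$ pairwise orthogonal and $\tau(q') < \tau(q'') < \tau(q''')$. Apply Lemma \ref{lem:smallprojections} with $q'$ (not $q$) in place of $q$, choosing $r$ close enough to $1$ that $(1-r)\tau(p) + \tau(q') < \tau(q'')$; this yields $f_j \le p+q'$ with $\tau(f_j) < \tau(q'')$, i.e.\ $f_j \preceq q''$. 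Now the hypotheses of Lemma \ref{lem:movingprojections} are available with source $f_j$, intermediate $s = q''$ (orthogonal to both $f_j$ and $q'''$), and target $q'''$, so each factor $(p+q-f_j)+u_j$ connects by a path in $U((p+q)J(p+q))$ to $(p+q'+q'') + v_j$ with $v_j \in U(q'''Jq''')$. Multiplying these paths pointwise gives a path from $u+q$ to $p + q' + q'' + \prod_j v_j$, and $v = q' + q'' + \prod_j v_j \in U(qJq)$ is the required unitary. Note the $f_j$ need not be mutually orthogonal for this: each factor is handled separately, and all the $v_j$ land in the same corner $q'''$.
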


\begin{proof}
The result of the lemma follows immediately from
Lemma \ref{lem:smallprojections} and
Lemma \ref{lem:movingprojections}.\\
By Proposition \ref{prop:RangeOfTrace},
Proposition \ref{prop:TracialDetermination} and
Theorem \ref{StrictComp},
we can decompose $q$ into pairwise orthogonal projections
$$q = q' \oplus q'' \oplus q'''$$
where $q', q'', q''' \in J \setminus (A \otimes \K)$
and $\tau(q') < \tau(q'') < \tau(q''')$, which implies that
$q' \preceq q'' \preceq q'''$.

By applying  Lemma \ref{lem:smallprojections} and by passing to a
unitary which connects with $u+q'$ by a continuous path of unitaries
in $U((p+q)J(p+q)),$ without loss of generality, we may write
\beq\label{L411-n2}
u+q'=\prod_{j=1}^m ((p+q'-f_j)+u_j)
\eneq
such that $u_j\in U(f_jJf_j),$ where $f_j\le p+q'$  and
$f_j\preceq q'',$  $j=1,2,...,m.$
Note that
$$
f_j, q'', q'''\le p+q,\,\,\,f_j\perp q'', f_j\perp q''', q'''\perp q'' \andeqn f_j\preceq q''\preceq q''',\,\,\,j=1,2,...,m.
$$
Thus, by Lemma \ref{lem:movingprojections}, for each $j,$
there is a continuous path of unitaries
$\{w_j(t): t\in [0,1]\}\subset (p+q)J(p+q)$ such that
\beq\label{L411-n3}
w_j(0)=(p+q'+q''+q'''-f_j)+u_j\andeqn w_j(1)=v_j+(p+q'+q''),
\eneq
where $v_j\in U(q'''Jq'''),$ $j=1,2,...,m.$
Define
$$
u(t)=\prod_{j=1}^m w_j(t)\tforal t\in [0,1].
$$
Then $\{u(t):t\in [0,1]\}$ is a continuous path of unitaries in
$(p+q)J(p+q).$ Moreover,
$$
u(0)=u+q\andeqn u(1)=v+p,
$$
where $v=\prod_{j=1}^m (v_j+q'+q'')\in U(qJq).$

\end{proof}

\section{The main results}

\begin{lem}\label{sequenceproj}
Let $A$ be a unital \CA\, and $B= A\otimes {\cal K}.$
Then $M(B)$ contains a sequence of mutually orthogonal projections $\{p_n\}$ each of which
is equivalent to the identity $1_{M(B)},$  $p_n\notin B,$
$n=1,2,...,$  and
$\sum_{n=1}^{\infty} p_n$ converges strictly to $1_{M(B)}.$
\end{lem}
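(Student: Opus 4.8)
The plan is to do everything inside a fixed concrete picture of $\K$. Write $\K=\K(H)$ for an infinite dimensional separable Hilbert space $H$ and fix an orthogonal decomposition $H=\bigoplus_{n=1}^\infty H_n$ with each $H_n$ infinite dimensional, so that for every $n$ there is an isometry $v_n\in B(H)$ with $v_nv_n^*=P_n$, where $P_n$ is the orthogonal projection of $H$ onto $H_n$. The $P_n$ are mutually orthogonal projections in $B(H)$ and the increasing partial sums $\sum_{n=1}^N P_n$ converge to $1_{B(H)}$ in the strong operator topology. The first step is then to record the standard unital inclusion $1_A\otimes B(H)\subseteq M(B)$: representing $A$ faithfully and nondegenerately on a Hilbert space $H_A$ we have $B=A\otimes\K\subseteq B(H_A\otimes H)$ acting nondegenerately, and since $B(H)\K\subseteq\K$ the algebra $1_A\otimes B(H)$ lies in the idealizer of $B$, that is, in $M(B)$, with $1_A\otimes 1_{B(H)}=1_{M(B)}$.

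Next I would set $p_n:=1_A\otimes P_n$ and $w_n:=1_A\otimes v_n$ in $M(B)$. Then $\{p_n\}$ is a sequence of mutually orthogonal projections, and $w_n^*w_n=1_{M(B)}$, $w_nw_n^*=p_n$, so each $p_n$ is Murray--von Neumann equivalent to $1_{M(B)}$. To see that $p_n\notin B$ (here one uses $A\neq\{0\}$, the case $A=\{0\}$ being vacuous), apply the slice map $\phi\otimes\mathrm{id}_{\K}:A\otimes\K\to\K$ for a state $\phi$ of $A$: this is a norm contraction with $(\phi\otimes\mathrm{id}_{\K})(1_A\otimes P_n)=\phi(1_A)P_n=P_n$, so if $1_A\otimes P_n$ were in $A\otimes\K$ we would get $P_n\in\K(H)$, contradicting that $P_n$ has infinite rank.

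Finally, for the strict convergence $\sum_{n=1}^\infty p_n\to 1_{M(B)}$, it is enough to check for $b$ ranging over a dense subset of $B$ that $\|(\sum_{n=1}^N p_n)b-b\|\to 0$ and $\|b(\sum_{n=1}^N p_n)-b\|\to 0$. For an elementary tensor $b=a\otimes k$ with $k\in\K(H)$ one has $(\sum_{n=1}^N p_n)b=a\otimes\big((\sum_{n=1}^N P_n)k\big)$, and $(\sum_{n=1}^N P_n)k\to k$ in norm because $\sum_{n=1}^N P_n\to 1_{B(H)}$ strongly and $k$ is compact; the right-hand statement is symmetric, and the general case follows by linearity and density. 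I do not expect a genuine obstacle here: the whole argument is essentially bookkeeping, and the only points that deserve a line of care are the identification $1_A\otimes B(H)\subseteq M(A\otimes\K)$ and the passage from strong to norm convergence after multiplying the projections $\sum_{n=1}^N P_n$ into the compacts.
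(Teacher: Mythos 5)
Your proof is correct and is essentially the same argument as the paper's: the paper partitions $\N$ into infinitely many disjoint infinite subsets $I_n$ and sets $p_n=\sum_{j\in I_n}1_A\otimes e_{jj}$, which is exactly your orthogonal decomposition $H=\bigoplus_n H_n$ (with $H_n$ infinite dimensional) phrased via matrix units rather than subspaces. The only cosmetic difference is how the strict convergence is verified (you use strong convergence of $\sum_{n\le N}P_n$ to $1$ against compacts; the paper arranges the partition so that $p_m e_n=0$ for $m>n$ for a fixed approximate unit $\{e_n\}$), and the two are interchangeable.
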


\begin{proof}
This is known. One can have a partition of $\N$ into a sequence
$\{I_n\}$ of mutually disjoint infinite subsets  such that $1\in I_n$ and
$I_n\cap \{1,2,...,n-1\}=\emptyset.$
Let $\{e_{i,j}\}$ be a system of matrix unit for  ${\cal K}.$  We identify
$e_{ii}$ with $1_A\otimes e_{ii}.$
Define
$$
p_n=\sum_{j\in I_n} e_{jj}.
$$
The convergence is in the strict topology. Let $e_n=\sum_{i=1}^n e_{ii}.$
Then $\{e_n\}$ forms an approximate identity for $B.$
Note that, by the construction,
$$
p_me_n=0\tforal m>n.
$$
It follows that
$\sum_{k=1}^n p_k$ converges strictly to $1_{M(B)}.$
\end{proof}

The following holds in a much more general situation.

\begin{lem}\label{seqsmallpro}
Let $A$ be a unital separable simple
${\cal Z}$-stable C*-algebra with unique tracial state
$\tau$, and let
$B={A}\otimes {\cal K}.$
Let $J$ be the smallest ideal of $M(B)$ which properly contains $B$.

Then, for any sequence of positive numbers
$\{\alpha_n\}$ with $\alpha_n\ge \alpha_{n+1}$
and $\sum_{n=1}^{\infty}\alpha_n<\infty,$ there exists a sequence
of mutually orthogonal projections $\{q_n\}\in M(B)-B$ such that
$q_{n+1}\lesssim q_n,$ $\tau(q_n)=\alpha_n,$ $n=1,2,...,$ such that
$\sum_{n=1}^{\infty}q_n$ converges in the strict topology and $p=\sum_{n=1}^{\infty}q_n\in J.$
\end{lem}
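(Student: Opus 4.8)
Write $C=A\otimes\K$, so that $B=C$ in the notation of the lemma. The plan has three parts: a comparison statement in $M(C)$, the actual construction of the $q_n$, and the verification that they land in $J$.

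First I would record the following comparison fact, which is really the engine of the argument: \emph{if $e,f\in M(C)\setminus C$ are projections with $\tau(e)\le\tau(f)<\infty$, then $e\lesssim f$.} When $\tau(e)=\tau(f)$ this is exactly Proposition \ref{prop:TracialDetermination}. When $\tau(e)<\tau(f)$, I would use Lemma \ref{sequenceproj} to fix two mutually orthogonal projections $P_1,P_2\le 1_{M(C)}$ with $P_i\sim 1_{M(C)}$; applying Proposition \ref{prop:RangeOfTrace} (note that $T(A)$ is a single point, so the ``affine functions'' there are just positive reals) and conjugating by the partial isometries implementing $1_{M(C)}\sim P_i$ produces projections $g_1\le P_1$, $g_2\le P_2$ in $M(C)\setminus C$ with $\tau(g_1)=\tau(e)$ and $\tau(g_2)=\tau(f)-\tau(e)$. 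Then $g:=g_1+g_2\in M(C)\setminus C$ has $\tau(g)=\tau(f)$, so $g\sim f$ by Proposition \ref{prop:TracialDetermination}; conjugating $g_1\le g$ across this equivalence gives a subprojection $f_0\le f$ in $M(C)\setminus C$ with $\tau(f_0)=\tau(e)$, and one more application of Proposition \ref{prop:TracialDetermination} gives $e\sim f_0\le f$, hence $e\lesssim f$.

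For the construction, I would fix via Lemma \ref{sequenceproj} mutually orthogonal projections $\{P_n\}\subset M(C)$ with each $P_n\sim 1_{M(C)}$ and $\sum_n P_n=1_{M(C)}$ strictly, together with $V_n\in M(C)$ satisfying $V_n^*V_n=1_{M(C)}$, $V_nV_n^*=P_n$. For each $n$, Proposition \ref{prop:RangeOfTrace} supplies a projection $e_n\in M(C)\setminus C$ with $\tau(e_n)=\alpha_n$, and I set $q_n:=V_ne_nV_n^*\le P_n$. Then the $q_n$ are mutually orthogonal, $q_n\notin C$ (otherwise $e_n=V_n^*q_nV_n\in C$), and $\tau(q_n)=\tau(e_n)=\alpha_n$ since $q_n=ww^*$ and $e_n=w^*w$ for $w=V_ne_n^{1/2}$ and the lower-semicontinuous extension of $\tau$ to $M(C)_+$ is tracial. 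Strict convergence of $\sum_n q_n$ follows from that of $\sum_n P_n$: for $b\in C$ and $N<M$, since $\sum_{N<n\le M}q_n\le\sum_{N<n\le M}P_n$ is a projection, $\|(\sum_{N<n\le M}q_n)b\|^2=\|b^*(\sum_{N<n\le M}q_n)b\|\le\|b\|\,\|(\sum_{N<n\le M}P_n)b\|\to 0$ as $N\to\infty$, uniformly in $M$, and similarly on the left. Writing $p=\sum_n q_n$, lower semicontinuity of $\tau$ along the increasing partial sums gives $\tau(p)=\sum_n\alpha_n<\infty$; hence $\tau(p^*p)<\infty$, so $p\in\overline{\{x\in M(C):\tau(x^*x)<\infty\}}=J$ by the description of $J$ in \ref{MinIdeal} (valid here since $C=A\otimes\K$, $A$ has unique trace and, being $\mathcal Z$-stable, strict comparison). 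Finally $q_{n+1}\lesssim q_n$ is immediate from the comparison fact above, applied with $e=q_{n+1}$, $f=q_n$, using $\tau(q_{n+1})=\alpha_{n+1}\le\alpha_n=\tau(q_n)$.

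The delicate points — none of them, I expect, a real obstacle — are all about the (possibly unbounded, densely defined) extension of $\tau$ to $M(C)_+$: its traciality on finite-trace elements, needed so that $\tau(V_ne_nV_n^*)=\alpha_n$ and so that the conjugations in the comparison step preserve traces, and its lower semicontinuity, needed for $\tau(p)=\sum_n\alpha_n$; together with the routine bookkeeping that every projection produced lies outside $C$ (which follows each time because $C$ is a hereditary ideal). These are precisely the ingredients already in force in Proposition \ref{prop:TracialDetermination} and in Lemmas \ref{L46}--\ref{cornerunit}. If anything needs care it is the comparison statement of the first paragraph; once that is in place, the rest is an assembly of Lemmas \ref{sequenceproj}, Propositions \ref{prop:RangeOfTrace} and \ref{prop:TracialDetermination}, and the characterization of $J$.
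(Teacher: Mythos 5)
Your proof is correct and follows the same route as the paper's: take the mutually orthogonal $p_n\sim 1_{M(B)}$ from Lemma \ref{sequenceproj}, use Proposition \ref{prop:RangeOfTrace} together with the equivalence $p_n\sim 1_{M(B)}$ to produce $q_n\le p_n$ with $\tau(q_n)=\alpha_n$, and conclude strict convergence of $\sum q_n$ and membership of the sum in $J$ from the trace characterization of $J$ in \ref{MinIdeal}. The paper's proof leaves the verification of $q_{n+1}\lesssim q_n$ implicit; your comparison lemma supplies it cleanly, though you could also get the strict-inequality case in one step from Lemma \ref{L41} and the equality case from Proposition \ref{prop:TracialDetermination}.
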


\begin{proof}
Let $\{p_n\}$ be as in \ref{sequenceproj}.

By Proposition \ref{prop:RangeOfTrace} and since $p_n \sim 1_{M(B)}$,
let $q_n \in J-B$ be such that $q_n \leq p_n$ and
$\tau(q_n) = \alpha_n$ for all $n \geq 1$.
Then $\sum_n q_n$ converges strictly to an element of $J$.
\end{proof}

\begin{cor}\label{Cssp}
Let $A$ be a unital separable simple $\mathcal{Z}$-stable C*-algebra with
unique tracial state $\tau$, let
$B={A}\otimes {\cal K}$, let $J$ be the smallest ideal of $M(B)$ which properly
contains $B$, and let $p\in M(B)$ be a projection such that
$\tau(p)=\infty.$
Then, for any sequence of positive numbers
$\{\alpha_n\}$ with $\alpha_n\ge \alpha_{n+1}$ and
$\sum_{n=1}^{\infty}\alpha_n<\infty,$ there exists a sequence
of mutually orthogonal projections $\{q_n\}\in M(B) -B $ such that
$q_{n+1}\lesssim q_n,$ $\tau(q_n)=\alpha_n,$ $n=1,2,...,$ such that
$\sum_{n=1}^{\infty}q_n$ converges in the strict topology to a projection $q\in J$ such that
$q\le p.$
\end{cor}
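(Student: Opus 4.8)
The plan is to produce the required sequence inside a corner of $p$ by transporting the sequence furnished by Lemma~\ref{seqsmallpro}. First I would apply Lemma~\ref{seqsmallpro} to the sequence $\{\alpha_n\}$ to obtain mutually orthogonal projections $\{q_n'\}\subset M(B)$ with $q_{n+1}'\lesssim q_n'$, $\tau(q_n')=\alpha_n$ for all $n$, the sum $\sum_n q_n'$ strictly convergent, and $q':=\sum_n q_n'\in J$. The remaining task is to move $q'$ and the individual $q_n'$ underneath $p$.

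The heart of the matter is the comparison $q'\preceq p$ in $M(B)$, i.e.\ the existence of a partial isometry $w\in M(B)$ with $w^*w=q'$ and $ww^*\le p$. I would establish this by repeating the argument in the proof of Proposition~\ref{prop:TracialDetermination}. Let $a$ and $b$ be strictly positive elements of $\overline{q'(A\otimes{\cal K})q'}$ and $\overline{p(A\otimes{\cal K})p}$ respectively; then $d_\tau(a)=\tau(q')=\sum_n\alpha_n<\infty$ while $d_\tau(b)=\tau(p)=\infty$, so $d_\tau(a)<d_\tau(b)$ and hence $a\preceq b$ by the strict comparison of positive elements in $A\otimes{\cal K}$. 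Since $A$ has stable rank one (being simple, separable, ${\cal Z}$-stable and stably finite), the Cuntz-comparison theory over stable-rank-one algebras then yields a partial isometry $w\in (A\otimes{\cal K})^{**}$ with $w^*w$ the support projection $q'$ of $a$ and $ww^*$ a projection dominated by the support projection $p$ of $b$; the verification that $w$ actually lies in $M(B)$ is word-for-word the one carried out in the proof of Proposition~\ref{prop:TracialDetermination}, using $pw=w=wq'$ together with the fact that $w$ intertwines the Hilbert $A\otimes{\cal K}$-modules $\overline{a(A\otimes{\cal K})}$ and $\overline{b(A\otimes{\cal K})}$.

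Finally I would set $q_n:=wq_n'w^*$. Since $q_n'\le q'=w^*w$, each $q_n$ is a projection and $wq_n'$ implements a Murray--von Neumann equivalence $q_n\sim q_n'$, so $\tau(q_n)=\alpha_n$ and, transporting $q_{n+1}'\lesssim q_n'$, also $q_{n+1}\lesssim q_n$; the $q_n$ are mutually orthogonal because the $q_n'$ are, using $q_m'q'q_n'=0$ for $m\ne n$. The partial sums satisfy $\sum_{k=1}^m q_k=w\big(\sum_{k=1}^m q_k'\big)w^*$, and since $\sum_k q_k'\to q'$ strictly and $bw,\ w^*b\in A\otimes{\cal K}$ for every $b\in A\otimes{\cal K}$, these converge strictly to $wq'w^*=ww^*\le p$. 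As $ww^*\sim q'\in J$ and $J$ is an ideal, $q:=ww^*\in J$, which is the projection required.

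I expect the comparison step to be the only genuine obstacle. It amounts to the statement that in $M(A\otimes{\cal K})$ a finite-trace projection is Murray--von Neumann subequivalent to every projection of no smaller trace --- that is, Proposition~\ref{prop:TracialDetermination} with subequivalence in place of equivalence --- and it goes through by the identical argument, the strict inequality $d_\tau(a)<d_\tau(b)$ (guaranteed here by $\tau(p)=\infty$) replacing the equality used there. Everything else is routine bookkeeping with partial isometries and strict limits.
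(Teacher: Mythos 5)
The paper states Corollary~\ref{Cssp} with no proof at all (it follows Lemma~\ref{seqsmallpro} directly), so there is nothing in the text to compare against; I can only assess your argument on its own terms.

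Your argument is correct, and I think it is essentially the argument the authors had in mind as ``immediate.'' You reduce everything to the single non-trivial claim that the finite-trace projection $q'=\sum_n q_n'\in J$ produced by Lemma~\ref{seqsmallpro} is Murray--von~Neumann subequivalent to $p$ \emph{inside} $M(B)$, and you get this by comparing strictly positive elements $a\in q'Bq'$ and $b\in pBp$, using $d_\tau(a)=\tau(q')<\infty=\tau(p)=d_\tau(b)$ together with strict comparison and stable rank one, exactly as in the proof of Proposition~\ref{prop:TracialDetermination}. Two small remarks. First, since $d_\tau(b)=\infty$, strict comparison is invoked in the form $(a-\epsilon)_+\preceq b$ for every $\epsilon>0$ (because $d_\tau((a-\epsilon)_+)\le d_\tau(a)<\infty$), which is precisely the hypothesis the stable-rank-one Hilbert-module machinery of \cite{LinCuntzSemigroup} needs to produce the partial isometry $w$; your ``the strict inequality $\dots$ replacing the equality'' sentence at the end acknowledges this. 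Second, calling the verification that $w\in M(B)$ ``word-for-word'' the same as in Proposition~\ref{prop:TracialDetermination} is a mild overstatement: there the equality $d_\tau(a)=d_\tau(b)$ gives a unitary isomorphism $\overline{aB}\cong\overline{bB}$ of Hilbert modules, whereas here one obtains an isometric embedding $\overline{aB}\hookrightarrow\overline{bB}\subseteq B$, so the identities to check are $wq'c\in B$ and $cpw\in B$ (using $w=wq'=pw$) rather than a two-sided isomorphism. The mechanism is the same, but it is worth saying so explicitly. The remaining bookkeeping (transporting $q_{n+1}'\lesssim q_n'$, the mutual orthogonality of $q_n=wq_n'w^*$, the strict convergence of the partial sums via $bw\in B$, and $q=ww^*\in J$ because $J$ is an ideal containing the MvN-equivalent $q'$) is all fine.
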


\begin{proof}
This follows from \ref{seqsmallpro} and the fact that $p\sim 1_{M(B)}.$
\end{proof}

\begin{lem}\label{unitaryconn}
Let $B$ be a  non-unital and $\sigma$-unital simple \CA,
$J\subset M(B)$ be an ideal
containing $B$ such that $J/B$ is purely infinite and simple and let
$p\in J$ be a non-zero projection.
Suppose that $u\in pM(B)p$ is a unitary such that $[u]=0$ in $K_1(pM(B)p).$
Then, for any $\ep>0,$ there are two selfadjoint elements
$H_1, H_2, \in pM(B)p$  and a unitary $w\in p+pBp$ such that
$\|H_1\|\le \pi,$  $\|H_2\|<\ep$  and
\beq\label{unitaryconn-0}
u=w\exp(iH_1)\exp(iH_2).
\eneq
In particular, $w^*u\in U_0(pM(B)p).$
\end{lem}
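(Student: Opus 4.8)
The plan is to transport the problem into the purely infinite simple quotient $Q_p:=pM(B)p/pBp$, to split $u$ there into a ``large'' exponential of norm $\le\pi$ and a ``small'' one, and then to lift both exponents back to $pM(B)p$ with control of their norms. First I would record the structure of $Q_p$: restricting the quotient map $M(B)\to M(B)/B$ to the corner $pM(B)p$ has kernel $pM(B)p\cap B=pBp$, and since $p\in J$ it identifies $Q_p$ with the corner $\bar p(J/B)\bar p$, where $\bar p$ is the image of $p$. If $p\in B$ then $pM(B)p=pBp$ and the statement holds trivially with $w=u$, $H_1=H_2=0$, so assume $p\notin B$; then $\bar p\neq 0$, $pBp$ is non-unital, $p+pBp$ is the unitization of $pBp$ sitting inside $pM(B)p$, and $Q_p=\bar p(J/B)\bar p$, being a full corner of the purely infinite simple algebra $J/B$, is itself unital, purely infinite and simple. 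In particular $Q_p$ has real rank zero (\cite{ZhPI}) and has no tracial state; hence $K_1(Q_p)\cong U(Q_p)/U_0(Q_p)$, and every element of $U_0(Q_p)$ is a norm-limit of unitaries of the form $\exp(ib)$ with $b=b^*$ and $\|b\|\le\pi$ (a unitary in a purely infinite simple C*-algebra is approximable by one with finite spectrum, which lies in $U_0$ and is plainly such an exponential).

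Next I would produce the two exponents inside $Q_p$. Let $\bar u$ be the image of $u$. Since $[u]=0$ in $K_1(pM(B)p)$ and the quotient map is a $*$-homomorphism, $[\bar u]=0$ in $K_1(Q_p)$, hence $\bar u\in U_0(Q_p)$. Given $\ep>0$, fix a sufficiently small $\ep_1\in(0,1)$ and, by the previous paragraph, choose a selfadjoint $\bar b_1\in Q_p$ with $\|\bar b_1\|\le\pi$ and $\|\bar u-\exp(i\bar b_1)\|<\ep_1$. Then $\|\exp(-i\bar b_1)\bar u-1\|<\ep_1$, so the unitary $\exp(-i\bar b_1)\bar u$ has spectrum in a small arc about $1$; applying to it the continuous branch $g$ of $\frac{1}{i}\log$ on $\T\setminus\{-1\}$ with values in $(-\pi,\pi)$ produces a selfadjoint $\bar b_2:=g(\exp(-i\bar b_1)\bar u)$ with $\|\bar b_2\|<\ep$ (this is where $\ep_1$ is chosen small, depending on $\ep$) and $\exp(i\bar b_2)=\exp(-i\bar b_1)\bar u$, that is, $\bar u=\exp(i\bar b_1)\exp(i\bar b_2)$.

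Finally I would lift and assemble. Choose selfadjoint lifts $H_1,H_2\in pM(B)p$ of $\bar b_1,\bar b_2$; replacing a lift $H$ of $\bar b_j$ by $f_j(H)$, where $f_j$ is the identity on $[-\|\bar b_j\|,\|\bar b_j\|]$ and constant outside that interval, we may assume $\|H_1\|\le\|\bar b_1\|\le\pi$ and $\|H_2\|\le\|\bar b_2\|<\ep$. Put $w:=u\exp(-iH_2)\exp(-iH_1)\in U(pM(B)p)$. Its image in $Q_p$ is $\bar u\exp(-i\bar b_2)\exp(-i\bar b_1)=\bar p$, so $w-p\in pBp$; thus $w\in p+pBp$ and $w$ is a unitary of the unital algebra $p+pBp$. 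By construction $u=w\exp(iH_1)\exp(iH_2)$ with $\|H_1\|\le\pi$ and $\|H_2\|<\ep$, and $w^*u=\exp(iH_1)\exp(iH_2)\in U_0(pM(B)p)$, which is the final assertion.

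The only genuinely non-routine ingredient is the fact invoked for $Q_p$ in the first paragraph: that a unital purely infinite simple C*-algebra (real rank zero, no traces) has every element of its identity component $U_0$ within a prescribed $\ep_1$ of an exponential $\exp(ib)$ with $\|b\|\le\pi$ — equivalently, that its exponential rank is at most $1+\ep$ with exponent of norm $\le\pi$. Everything else (identifying $Q_p$ with a corner of $J/B$, selfadjoint lifting with norm control, and the small-perturbation computation via $g$) is routine.
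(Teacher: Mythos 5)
Your proof is correct and follows essentially the same route as the paper's: pass to the purely infinite simple corner $pM(B)p/pBp$, use $[u]=0$ to put $\pi(u)$ in $U_0$, factor $\pi(u)=\exp(ih_1)\exp(ih_2)$ with $\|h_1\|\le\pi$, $\|h_2\|<\ep$, lift the exponents with norm control, and set $w=u\exp(-iH_2)\exp(-iH_1)$. The paper cites Phillips \cite{Ph} directly for the factorization, whereas you reconstruct it from the finite-spectrum approximation theorem plus the branch-of-logarithm perturbation step; that is a fair and slightly more self-contained account of the same ingredient. One small imprecision: the parenthetical ``a unitary in a purely infinite simple C*-algebra is approximable by one with finite spectrum'' should be restricted to unitaries in $U_0$ (finite-spectrum unitaries always lie in $U_0$, so they cannot approximate a unitary with nontrivial $K_1$-class), but you apply it only to $\bar u\in U_0(Q_p)$, so the argument itself is unaffected.
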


\begin{proof}
Note that $pM(B)p/pBp$ is purely infinite and simple.
Let $\pi: pM(B)p\to pM(B)p/pBp$ be the quotient map.
Suppose that $u$ is as in the lemma. Then
$\pi(u)\in  U_0(pJp/pBp).$
Note that $pJp/pBp$ is a unital purely infinite  simple \CA.
It follows from a result of Chris Phillips  (\cite{Ph}) that, for any $\ep>0,$
there  are selfadjoint elements $h_1, h_2\in pM(B)p/pBp$ such that
\beq\label{unitaryconn-1}
\pi(u)=\exp(i h_1)\exp(i h_2)\andeqn \|h_1\|\le \pi\andeqn \|h_2\|<\ep.
\eneq
There are  selfadjoint elements $H_1, H_2\in pM(B)p$
such that  $\|H_1\|\le \pi,$  $\|H_2\|\le \ep$  and $\pi(H_j)=h_j,$ $j=1,2.$
Let $u_j=\exp(i H_j),$ $j=1,2.$
Then
\beq\label{unitaryconn-2}
w=uu_2^*u_1^*\in p+pBp.
\eneq
It follows that $u=w\exp(iH_1)\exp(H_2)$ and $w^*u\in U_0(pM(B)p).$
\end{proof}

\begin{lem}\label{GroupL}
Let $A\in {\cal A}_0$ be a unital separable simple ${\cal Z}$-stable \CA\ with unique
tracial state $\tau$,
let $B=A\otimes {\cal K}$ and let $J\subset M(B)$ be an ideal containing $B$
such that $J/B$ is purely infinite and simple.
Suppose that $p\in J$
and $u\in pM(B)p\setminus B$ be a unitary. Then, for any projection $q\in J\setminus B$
and  $pq=0=qp,$ any $\ep>0,$ there is a unitary $v\in qM(B)q$ and selfadjoint elements $h_1, h_2\in (p+q)M(B)(p+q)$
and $h_3\in \C(p+q)+ (p+q)B(p+q)$ such that
\beq\label{GroupL-1}
&&\|h_1\|<\ep/2,\,\,\,\|h_2\|\le \pi,\,\,\, \|h_3\|\le 2\pi,\\
&&\|(u+v)-(p+q)\exp(i h_1)\exp(ih_2)\exp(ih_3)\|<\ep\andeqn\\
&&(u+v)-(p+q)\exp(i h_1)\exp(ih_2)\exp(ih_3)\in (p+q)B(p+q).
\eneq
\end{lem}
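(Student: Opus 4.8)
Write $B=A\otimes\K$, let $\pi\colon M(B)\to M(B)/B$ be the quotient map, and put $P=p+q$. Since $p,q\in J\setminus B$ are orthogonal projections, $P\in J\setminus B$ is a projection, so by \ref{Dconscale} the hereditary \SCA\ $C:=(p+q)B(p+q)=PBP$ of $B$ has continuous scale; moreover $(p+q)M(B)(p+q)=M(C)$, $\C(p+q)+(p+q)B(p+q)=\widetilde C$, $C$ is simple with unique tracial state and — being ${\cal Z}$-stable by \ref{herd} and finite — has stable rank one, and since $A\in{\cal A}_0$ is unital separable simple ${\cal Z}$-stable, Theorem \ref{Texpl} applies to $\widetilde C$. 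The plan is: choose $v$, then peel off a ``bounded exponential length'' factor from $u+v$ by Lemma \ref{unitaryconn}, and finally write the remaining factor — which will lie in $\widetilde C$ — as a single exponential of norm $\le 2\pi$ via Theorem \ref{Texpl}.

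\emph{Choice of $v$.} The corner inclusions $qBq\hookrightarrow C$ and $\pi(q)\pi(J)\pi(q)\hookrightarrow\pi(P)\pi(J)\pi(P)$ are inclusions of full hereditary subalgebras (the second one of corners of the purely infinite simple algebra $\pi(J)$), hence induce isomorphisms on $K_*$; applying the five lemma to the six-term sequences of $0\to qBq\to qM(B)q\to qM(B)q/qBq\to 0$ and $0\to C\to M(C)\to M(C)/C\to 0$ shows that the corner inclusion $qM(B)q\hookrightarrow M(C)$ induces an isomorphism on $K_1$. Because $qM(B)q/qBq=\pi(q)\pi(J)\pi(q)$ is (purely infinite and hence) properly infinite, $M(C)$ and $qM(B)q$ are $K_1$-surjective and $K_1$-injective, so one may pick a unitary $v\in qM(B)q$ whose class in $K_1(M(C))$ equals $-[u]$; then $[u+v]=[u]+[v]=0$ in $K_1((p+q)M(B)(p+q))$. (This choice of $v$ will be refined below.)

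\emph{Peeling off via Lemma \ref{unitaryconn}.} Apply Lemma \ref{unitaryconn} to the projection $P\in J$ and the unitary $(u+v)^*\in PM(B)P$, which has $[(u+v)^*]=-[u+v]=0$ in $K_1(PM(B)P)$, with tolerance $\ep/2$: this yields self-adjoint $G_1,G_2\in PM(B)P$ with $\|G_1\|\le\pi$, $\|G_2\|<\ep/2$, and a unitary $W_0\in P+PBP=P+C$ such that $(u+v)^*=W_0\exp(iG_1)\exp(iG_2)$. Taking adjoints and putting $h_1:=-G_2$, $h_2:=-G_1$, we get $h_1,h_2\in(p+q)M(B)(p+q)$ with $\|h_1\|<\ep/2$, $\|h_2\|\le\pi$, a unitary $W_0^{\,*}\in\widetilde C$ which (because $W_0\in P+C$) has image $1$ under the canonical character $\widetilde C\to\C$, and
\[
u+v=\exp(ih_1)\exp(ih_2)\,W_0^{\,*}.
\]
Note $(p+q)\exp(ih_1)\exp(ih_2)=\exp(ih_1)\exp(ih_2)$, so the factor $p+q$ in the statement is vacuous.

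\emph{The leftover and Theorem \ref{Texpl}.} The crux — and the main obstacle — is to arrange $W_0^{\,*}\in CU(\widetilde C)$. Since $C$ has stable rank one, $K_1(\widetilde C)=U(\widetilde C)/U_0(\widetilde C)$, and $U_0(\widetilde C)/CU(\widetilde C)$ is identified via the de la Harpe--Skandalis determinant with $\Aff(T(\widetilde C))/\overline{\rho_{\widetilde C}(K_0(\widetilde C))}$; so one must ensure both that $[W_0^{\,*}]=0$ in $K_1(\widetilde C)=K_1(C)\cong K_1(qBq)$ and that ${\rm Det}_{\widetilde C}(W_0^{\,*})=0$ modulo $\overline{\rho_{\widetilde C}(K_0(\widetilde C))}$. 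Both are achieved by refining the choice of $v$: one records how $[W_0]\in K_1(C)$ and ${\rm Det}_{\widetilde C}(W_0)$ depend on $v$, uses that $v$ may be multiplied by a unitary in $\widetilde{qBq}=q+qBq\subset qM(B)q$ (whose $K_1$ maps isomorphically onto $K_1(C)$) and by an exponential in $qM(B)q$ realizing any prescribed small determinant correction — such modifications leaving $[u+v]\in K_1(M(C))$ unchanged because the classes involved die there — and appeals to \ref{Ruv} for the vanishing of the determinant at the trace singular on $C$. Granting $W_0^{\,*}\in CU(\widetilde C)$, Theorem \ref{Texpl} (whose proof in fact produces the self-adjoint element inside $C$) gives a self-adjoint $h\in C$ with $\|h\|\le 1$ and $\|W_0^{\,*}-\exp(i2\pi h)\|$ as small as desired; set $h_3:=2\pi h\in(p+q)B(p+q)\subseteq\C(p+q)+(p+q)B(p+q)$, so $\|h_3\|\le 2\pi$ and $\exp(ih_3)$ has image $1$ under $\widetilde C\to\C$, whence $W_0^{\,*}-\exp(ih_3)\in(p+q)B(p+q)$. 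Finally, since $(p+q)B(p+q)$ is an ideal of $(p+q)M(B)(p+q)$ and $\exp(ih_1),\exp(ih_2)$ lie in the latter,
\[
(u+v)-(p+q)\exp(ih_1)\exp(ih_2)\exp(ih_3)=\exp(ih_1)\exp(ih_2)\bigl(W_0^{\,*}-\exp(ih_3)\bigr)\in(p+q)B(p+q),
\]
and its norm equals $\|W_0^{\,*}-\exp(ih_3)\|$. Choosing all tolerances small enough at the outset makes this $<\ep$, which together with the bounds $\|h_1\|<\ep/2$, $\|h_2\|\le\pi$, $\|h_3\|\le 2\pi$ is the assertion of the lemma.
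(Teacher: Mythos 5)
The critical gap is in the first step, where you choose $v\in qM(B)q$ with $[v]=-[u]$ in $K_1(M(C))$. The paper does \emph{not} do this by a $K_1$-surjectivity argument. Instead, after passing to a subprojection $q_1\le q$ with $q_1\preceq p$, it invokes Lemma~\ref{cornerunit} --- the central technical lemma of Section~4, built from Lemmas~\ref{lem:smallprojections} and~\ref{lem:movingprojections} --- to produce a unitary $v_1\in q_1Jq_1$ with $u+v_1+(q-q_1)\in U_0((p+q)J(p+q))$ by an explicit path construction, so that Lemma~\ref{unitaryconn} applies. Your replacement, that $qM(B)q$ is $K_1$-surjective ``because $qM(B)q/qBq$ is properly infinite,'' is asserted but not proved, and is not a standard fact: $qM(B)q$ itself carries a tracial state (since $\tau(q)<\infty$), hence is \emph{not} properly infinite, and $K_1$-surjectivity of an extension with stable-rank-one ideal and properly infinite quotient still requires lifting unitaries from the quotient, which you do not establish. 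Your five-lemma isomorphism $K_1(qM(B)q)\cong K_1(M(C))$ is fine, but it does not by itself produce the required unitary $v$. In short, the proposal replaces the paper's main technical lemma with an unproven $K$-theory claim.

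A second issue: you claim that multiplying $v$ by a unitary $v'\in\widetilde{qBq}$ preserves $[u+v]=0$ in $K_1(M(C))$ ``because the classes involved die there,'' i.e.\ that $K_1(\widetilde{qBq})\to K_1(M(C))$ vanishes. Via the six-term sequence for $0\to C\to M(C)\to M(C)/C\to 0$, this amounts to surjectivity of $\partial:K_0(M(C)/C)\to K_1(C)\cong K_1(A)$, which is not automatic when $K_1(A)\neq 0$ (and for $A\in{\cal A}_0$ in general it need not hold). It is also unnecessary: one should \emph{not} re-apply Lemma~\ref{unitaryconn} after adjusting $v$. Once $(u+v)^{*}=W_0\exp(iG_1)\exp(iG_2)$ has been obtained, multiplying $v$ by $v'\in q+qBq$ multiplies $W_0$ by $p+v'^{*}\in P+PBP$ in place and the decomposition stays valid with the same $G_1,G_2$; no $K_1$ condition on $v'$ is needed. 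That is exactly the paper's mechanism: $w_1\in q+qBq$ with $[w_1]=-[w]$ and $v_2=q\exp(-ib)$ are multiplied into $w$ after the single use of \ref{unitaryconn}, and the final $v=(v_1+(q-q_1))w_1v_2$ is assembled at the end. Your determinant correction (``an exponential in $qM(B)q$'' plus an appeal to \ref{Ruv}) would then have to be made precise along those lines --- write $w(w_1+p)=\prod_k\exp(ia_k)$ with $a_k\in PBP$, choose $b\in(qBq)_{\rm s.a.}$ with $\tau(b)=\sum_k\tau(a_k)$, and set $v_2=q\exp(-ib)$ --- rather than left as a gesture.
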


\begin{proof}
There is, by \ref{C45}  and \ref{T46}, a projection $q_1\in J\setminus B$ such that $q_1\le q$
and there is a unitary $z\in (p+q)M(B)(p+q)$ such that
$$
z^*q_1z\le p.
$$
It follows from Lemma \ref{cornerunit}  that there is a unitary $v_1\in q_1Jq_1$ such
that
\beq\label{GroupL-2}
u+v_1+(q-q_1)\in U_0((p+q)J(p+q)).
\eneq
Put $P=p+q.$
It follows from \ref{unitaryconn} that there are selfadjoint elements
$h_1, h_2\in PJP$ with $\|h_1\|<\ep/2$ and $\|h_2\|\le \pi$ such that
$$
u+v_1+(q-q_1)=\exp(ih_1)\exp(ih_2)w,
$$
where $w\in P+PBP.$
Since $B$ is simple and has stable rank one,
there exists $w_1\in q+qBq$ such that $[w_1]=-[w].$ Thus
$w(w_1+p)\in U_0(P+PBP).$
Let
$$
w(w_1+p)=\prod_{k=1}^m \exp(ia_k),
$$
where $a_k\in (\C P+PBP)_{s.a.}.$ Let $\pi(a_k)=\af_k,$ $k=1,2,...,m.$
Since $w(w_1+p)\in P+PBP,$
$\sum_{k=1}^m\af_k=2N\pi$ for some integer $N.$
By replacing $a_k$ by
$a_k-\af_kP,$ we may assume that $\pi(a_k)=0,$ $k=1,2,...,m.$
In other words, $a_k\in PBP,$ $k=1,2,...,m.$
Choose $b\in (qBq)_{s.a.}$ such that
\beq\label{GroupL-3}
\tau(b)=\sum_{k=1}^m \tau(a_k).
\eneq
Set
\beq\label{GroupL-4}
v_2=q \exp(-ib).
\eneq

Now let $v=(v_1+(q-q_1))w_1 v_2$
Then
$$
u+v=\exp(ih_1)\exp(ih_2)w(w_1+p)(v_2+p).
$$
Note that
$$
w (w_1+p)(v_2 + p)=P\prod_{k=1}^m \exp(ia_k) \exp(-ib).
$$
But
$$
\tau(-b)+\sum_{k=1}^m \tau(a_k)=0.
$$
It follows that
$$
w(w_1+p)(v_2 + p)\in CU(\C P+PBP).
$$
By \ref{Texpl}, there is $h_3\in PBP_{s.a.}$ with $\|h_3 \|\le 2\pi$ such that
$$
\|w (w_1+p)(v_2 + p)-P\exp(ih_3)\|<\ep.
$$
It follows that
$$
\|(u+v)- P\exp(ih_1)\exp(ih_2)\exp(ih_3)\|<\ep.
$$
\end{proof}

\begin{lem}\label{Jappp} Let $A$ be a unital separable simple $\mathcal{Z}$-stable C*-algebra
with unique tracial state $\tau$ and  let
$J$ be the smallest ideal in $M(A \otimes \K)$ that properly contains
$A \otimes \K$. 
Then, for any finite subset ${\cal F}\subset J,$  any projection $e\in J$ and any $\ep>0,$ there exists a
projection $p\in J$ such that
\beq\label{Jup-1}
e\le p\andeqn \|xp-x\|<\ep \tforal x\in {\cal F}.
\eneq
\end{lem}

\begin{proof}
Let $1/2>\ep>0,$ finite subset ${\cal F}\subset J$ and a projection $e\in J$ be given.
We may assume, without loss of generality, that $\|x\|\le 1$ for all $x\in {\cal F}.$ 
Let $\{e_\af\}$ be an approximate identity for $J.$ There exists $\af$ such that
\beq\label{Jappp-2}
\|xe_\af -x\|<\ep/64\andeqn \|ee_\af-e\|<\ep/64.
\eneq

We may assume that $\|e_\af\|>1/2.$ 
Put $b=(e_\af-\ep/64)_+.$  
Let
\[
f_{\ep/65, \ep/64}(t) =_{df}
\begin{cases}
1 & t \in [\ep/64, 1] \\
0 & t \in [0, \ep/65] \\
\makebox{linear on }[ \ep/65, \ep/64].  
\end{cases}
\]
 
Since $e_{\af} \in J$, $1_{M(A \otimes \K)} - f_{\ep/65, \ep/64}(e_\af)$
is a full element of $M(A \otimes \K)$, and 
$d_{\tau}(1_{M(A \otimes \K)} - f_{\ep/65, \ep/64}(e_\af)) = \infty$.    
Hence, by Theorem \ref{StrictComp}, 
there is a positive element $c\notin A \otimes \K,$ $c\perp b.$ 
Recall also, by \ref{Dconscale}, that $d_{\tau}(b) < \infty$.  
It follows from Lemma \ref{lem:SpecialAU} (with $p=1_{M(A\otimes {\cal K})}$) that there is a projection $q\in J$  such that
\beq\label{Jappp-3}
\|bq-b\|<\ep/64.
\eneq
Therefore
\beq\label{Jappp-4}
\|e_\af q-e_\af\|<\ep/64+\|bq-b\|+\ep/64<3\ep/64.
\eneq
We estimate that
\beq\label{Jappp-5}
\|xq-x\|<\|xe_\af q-xq\|+\|xe_\af q-xe_\af\|+\|xe_\af-x\|\\
<\ep/64+3\ep/64+\ep/64=5\ep/64\andeqn\\
\|eq-e\|<5\ep/64.
\eneq
There exists a unitary $u\in M(A\otimes {\cal K})$ such that 
$\|u-1\|<10\ep/64$ such that 
$u^*qu\ge e.$ Let $p=u^*qu.$


\end{proof}

\begin{thm}\label{K1}
Let $A\in {\cal A}_0$ be a unital separable simple $\mathcal{Z}$-stable C*-algebra
with unique tracial state $\tau$  and let
$J$ be the smallest ideal in $M(A \otimes \K)$ that properly contains
$A \otimes \K$.

Then
$K_1({J})=0.$ In fact, for any $u\in {\tilde{J}},$
$$
{\rm cel}(u)\le 7\pi.
$$
\end{thm}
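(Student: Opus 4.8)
The strategy is to show that every unitary $u \in \tilde{J}$ can be connected to $1$ by a path of uniformly bounded length, which immediately gives $K_1(J) = 0$ and the stated length bound. Since $J$ has an approximate identity $\{e_n\}$ consisting of projections (by the preceding lemma), and since $J/(A\otimes \K)$ is purely infinite and simple while $J$ itself has ``enough'' projections via Proposition \ref{prop:RangeOfTrace}, the first step is a reduction: given $u \in \tilde{J}$, write $u$ modulo $A\otimes \K$ and use that $A\otimes\K$ has stable rank one together with $\pi(u) \in U_0$ of the simple purely infinite quotient $\tilde J/(A\otimes \K)$ to reduce to the case that $u = \lambda + w$ with $w \in J$, $\lambda \in \T$; normalizing by $\bar\lambda$, we may assume $u \in 1 + J$, and then by a small perturbation we may assume $u$ lives in a corner, i.e. $u = (1 - p) + u_0$ where $p \in J$ is a projection and $u_0 \in U(pM(A\otimes\K)p)$ with $u_0 - p \in pJp$ — here one needs $\tau(p) = \infty$ is arranged by absorbing into $p$ a projection equivalent to $1_{M(A\otimes\K)}$ using \ref{sequenceproj} and \ref{Cssp}.

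The second, main step applies Lemma \ref{GroupL}. Choose a projection $q \in J\setminus (A\otimes\K)$ with $q \perp p$ and (after enlarging $p$ as above) $q \preceq p$; in fact one wants $p$ itself to have infinite trace so that $q$ can be chosen with $\tau(q)$ as large as needed and $q \preceq p$, $p \preceq q$ both available. Applying \ref{GroupL} to $u_0$ on the corner $pM(B)p$ with this $q$ produces a unitary $v \in qM(B)q$ and selfadjoint elements $h_1, h_2, h_3 \in (p+q)M(B)(p+q)$ with $\|h_1\| < \ep/2$, $\|h_2\| \le \pi$, $\|h_3\| \le 2\pi$, such that $(u_0 + v)$ equals $(p+q)\exp(ih_1)\exp(ih_2)\exp(ih_3)$ up to $\ep$ and modulo $(p+q)B(p+q)$. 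This already exhibits a path from $u_0 + v$ to $p+q$ of length at most (roughly) $\pi + 2\pi + \ep/2$ plus a short correction path of length $O(\ep)$ to fix the $\ep$-error (using that $B$ has stable rank one, the perturbation correction term is a unitary in $(p+q) + (p+q)B(p+q)$ which, being close to the identity, is connected to $1$ by a path of length $< \ep$). So $u_0 + v$ has cel at most $3\pi + \ep$ in $U((p+q)M(B)(p+q))$.

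The third step handles the auxiliary unitary $v$: we must ``pay'' for having introduced $v \in U(qM(B)q)$. Since $q \preceq p$ and $p \perp q$, Lemma \ref{cornerunit} gives a continuous path of unitaries in $U((p+q)J(p+q))$ connecting $q + (\text{something in } pJp)$-type unitaries — more precisely, apply \ref{cornerunit} (or rather its proof/consequences) to move $v$ off $q$ and onto a subprojection of $p$, trading it against a rotation; alternatively, observe directly that $(1-p-q) + v + p$ (a unitary equal to $1$ off the corner $qM(B)q$) is connected to $1$ within $U(\tilde J)$ because $v \in U(qM(B)q)$ and, $q$ having infinite trace with $qJ q/ q(A\otimes\K)q$ purely infinite simple, one repeats Step 2 on $v$ — but this risks an infinite regress. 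The clean way, which I expect to be the actual argument: arrange from the start (Step 1) that $p = p' + p''$ with $p' \sim p'' \sim p$ all infinite-trace and $q \le p''$, so that after Step 2 the unitary $v + (\text{rest})$ can be absorbed back by a Lemma \ref{cornerunit}-type rotation inside $p$ of length $\le \pi$, costing one more $\pi$. Summing: the three exponentials contribute $\le \pi + 2\pi$, the re-absorption of $v$ contributes $\le \pi$, the normalization by $\bar\lambda$ contributes $\le \pi$, and the various $\ep$-perturbations contribute $\le \ep$; letting $\ep \to 0$ gives $\mathrm{cel}(u) \le 7\pi$. In particular $K_1(J) = 0$.

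**Main obstacle.** The delicate point is Step 3 — bookkeeping the auxiliary unitary $v$ so that reabsorbing it does not require invoking \ref{GroupL} again (which would loop). This is resolved by choosing, before applying \ref{GroupL}, the ambient projection large enough ($\tau = \infty$, split into two equivalent infinite-trace halves) that a single application of \ref{cornerunit} moves $v$ back into the original corner along a path of length $\le \pi$; the uniform length bound $7\pi$ then comes from carefully adding: $\pi$ (phase $\lambda$) $+\, \pi$ (from $h_2$) $+\, 2\pi$ (from $h_3$) $+\, \pi$ (reabsorbing $v$ via \ref{cornerunit}) $+\, \pi$ (slack from the $h_1$ and perturbation corrections, which can each be made $<\ep$ but are bounded crudely) $+\, \pi$ (a further \ref{cornerunit} rotation to undo the corner reduction), and absorbing the genuinely small terms into a single $7\pi$.
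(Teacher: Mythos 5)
The reduction in your Steps 1--2 matches the paper: pass to $u\in 1+J$, write $u=(1-P)+u_0$ with $u_0\in U(PJP)$ using that $J$ has an approximate identity of projections, and then feed $u_0$ into Lemma~\ref{GroupL} against an auxiliary corner. The problem is Step 3. You correctly identify the central difficulty --- Lemma~\ref{GroupL} introduces an auxiliary unitary $v\in U(qM(B)q)$ which must itself be accounted for, and you correctly note that naively iterating would ``risk an infinite regress.'' But the fix you then propose does not work. You suggest that Lemma~\ref{cornerunit} can ``reabsorb'' $v$ along a path of length $\le\pi$. Re-read that lemma: given $u\in U(pJp)$ and $q\perp p$ with $q\preceq p$, it gives a path in $U((p+q)J(p+q))$ from $u+q$ to $p+v'$ for \emph{some} unitary $v'\in U(qJq)$. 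It relocates a unitary to a smaller corner; it does not kill it, nor does it give any bound on the length of the path. So invoking it does not close the loop --- you are left with a new unitary on $q$, exactly the situation you were trying to escape, and no uniform length control. The arithmetic ``$\pi+\pi+2\pi+\pi+\pi+\pi=7\pi$'' in your final paragraph therefore does not correspond to an actual chain of paths.

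The paper's resolution is precisely to \emph{embrace} the regress as an Eilenberg-type swindle. Choose a strictly summable sequence of pairwise orthogonal projections $q_1,q_2,\dots$ in $(1-P)J(1-P)$ (via Lemma~\ref{seqsmallpro}), set $q_0=P$, $v_0=u_0$, and apply Lemma~\ref{GroupL} repeatedly to get $v_n\in U(q_nJq_n)$ with $\|(v_{n-1}+v_n)-\exp(ih_1^{(n-1)})\exp(ih_2^{(n-1)})\|<\ep/2^{n}$, $\|h_1^{(n-1)}\|\le\pi$, $\|h_2^{(n-1)}\|\le2\pi$, the exponentials supported on $q_{n-1}+q_n$. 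Now form $U_0=\sum_{n\ge0}v_n$ and $U_1=\sum_{n\ge1}v_n$ (strictly convergent). Pairing $(q_0,q_1),(q_2,q_3),\dots$ expresses $U_0$ (up to $\ep$) as $\exp(iH_{0,1})\exp(iH_{0,2})$ with $\|H_{0,1}\|\le\pi$, $\|H_{0,2}\|\le2\pi$; pairing $(q_1,q_2),(q_3,q_4),\dots$ does the same for $U_1$. Since $u=W_0W_1^*$ where $W_j$ is the unitization of $U_j$, one gets $\mathrm{cel}(u)\le3\pi+3\pi=6\pi$ for $u\in1+J$, hence $\le7\pi$ in general after the phase normalization, and in particular $[u]=0$ in $K_1(\tilde J)$. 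The two pairings of the same infinite sequence are what let the auxiliary unitaries cancel telescopically --- this is the idea missing from your write-up, and no finite number of applications of Lemma~\ref{cornerunit} can substitute for it.
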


\begin{proof}
Let $u\in M_n({\tilde {J}}).$  Let $\Pi: M_n({\tilde { J}})\to M_n$
be the quotient map.
Let $z\in M_n$ such that $\Pi(u)=z.$ Identify $z$ with the  scalar matrix in $M_n({\tilde {J}})$ and put
$v=uz^*.$ Then $v\in {\widetilde M_n({J})}.$
Moreover, $\Pi(v)=1.$ Since $M_n({J})\cong {J},$ without
loss of generality, we may assume that
$u\in 1+ {J}$  but we need to show that ${\rm cel}(u)\le 6\pi.$

Let $\ep>0.$   By applying \ref{Jappp}, 
we may assume that there is a projection $P\in {J}$ such that
$$
u=(1-P)+u_0,
$$
where $u_0\in P{J}P$ is a unitary.

Note that $\tau(1-P)=\infty.$
Then, by applying \ref{Cssp},
one obtains a sequence of mutually orthogonal nonzero
projections $\{q_n\}$ in $J - A \otimes \K$ such that
$\sum_{n=1}^{\infty} q_n$ converges in the strict topology and $\sum_{n=1}^{\infty}q_n\in (1-P){J}(1-P).$
By \ref{GroupL},  putting $P=q_0,$ there are  a sequence of unitaries, $u_n\in q_n{J}q_n$ and
 sequences of selfadjoint elements
$\{h_j^{2n-1}\}\subset (q_{2n-1}+q_{2n}){J}(q_{2n-1}+q_{2n})$ and
 $\{h_j^{(2n)}\}\subset (q_{2n}+q_{2n+1}){J}(q_{2n}+q_{2n+1})$ such that
 \beq\label{k1-1}
&& \|h_1^{(n)}\|\le \pi,\,\,\,\|h_2^{(n)}\|\le 2\pi,\\
 &&\|(u_{2n-1}+u_{2n})-\exp(ih_1^{(n)})\exp(ih_2^{(n)})\|<\ep/2^n\andeqn\\
 &&\|(u_{2n}+u_{2n+1})-\exp(ih_1^{2n})\exp(i h_2^{(2n)})\|<\ep/2^n,
 \eneq
$n=0, 1,2,....$
Since $\sum_{n=1}^{\infty} q_n$ converges in the strict topology and by (\ref{k1-1}),
so do $\sum_{n=0}^{\infty} u_n,$  $\sum_{n=1}^{\infty} u_n,$ $\sum_{n=0}^{\infty} h_1^{(n)},$
$\sum_{n=1}^{\infty} h_1^{(n)},$ $\sum_{n=0}^{\infty} h_2^{(n)}$ and
$\sum_{n=1}^{\infty}  h_2^{(n)}.$
Define
\beq\label{k1-2}
&&U_0=\sum_{n=0}^{\infty}u_n,\,U_1=\sum_{n=1}^{\infty} u_n\\
&&H_{0,1}=\sum_{n=0}^{\infty} h_1^{(n)},\, H_{0,2}=\sum_{n=0}^{\infty} h_2^{(n)}\\
&&H_{1,1}=\sum_{n=1}^{\infty} h_1^{(n)}\andeqn H_{1,2}=\sum_{n=1}^{\infty}  h_2^{(n)}.
\eneq
Then
\beq\label{k1-3}
&&U_0=u_0+U_1,\\\label{k1-4}
&&\|U_1-\exp(iH_{1,1})\exp(i H_{1,2})\|<\ep\\\label{k1-5}
&&\|U_0-\exp(i H_{0,1})\exp(iH_{02})\|<\ep\\\label{k1-6}
&&\|H_{0,1}\|\le \pi\andeqn \|H_{0,2}\|\le 2\pi.
\eneq
It follows from (\ref{k1-4}) that $[U_1]=0$ in $K_1({\tilde{J}}).$  Then, by (\ref{k1-5}) that
$[u_0]=0$ in $K_1({\tilde {J}}).$ It follows that $[u]=0$ in $K_1({\tilde {J}}).$
Since $u$ is arbitrarily chosen, we conclude that
$$K_1({\tilde {J}})=0.$$
Moreover, by (\ref{k1-4}), (\ref{k1-5}) and (\ref{k1-6}), one computes that
$$
{\rm cel}(u)\le 3\pi +3\pi=6\pi.
$$
So in general,
$$
{\rm cel}(u)\le 7\pi.
$$

\end{proof}

\begin{thm}
Let $A\in {\cal A}_0$ be a unital separable simple $\mathcal{Z}$-stable C*-algebra with
unique tracial state.
Then $M(A \otimes \K)/(A \otimes \K)$ has real rank zero.
\label{thm:MainRR0}
\end{thm}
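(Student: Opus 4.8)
The plan is to deduce this from Theorem~\ref{K1} by the standard extension theory for real rank zero. Write $C=A\otimes\K$, $M=M(C)$, let $J\subset M$ be the smallest ideal properly containing $C$, and let $Q=M/C$ be the corona algebra with quotient map $\pi:M\to Q$. Then $\pi(J)=J/C$ is the unique proper nontrivial ideal of $Q$ and $Q/\pi(J)\cong M/J$. As recalled in the introduction, both $\pi(J)$ and $M/J$ are purely infinite and simple, so by Zhang's theorem (\cite{ZhPI}) both have real rank zero.

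The only further ingredient needed is $K_0(M/J)=0$. For this I would first invoke the well-known vanishing $K_0(M(C))=K_1(M(C))=0$ for the stable \CA\ $C=A\otimes\K$ (an Eilenberg swindle; see for instance \cite{BlackadarBook}). Feeding this into the six-term exact sequence in $K$-theory for $0\to J\to M\to M/J\to 0$ produces an isomorphism $K_0(M/J)\cong K_1(J)$, and $K_1(J)=0$ by Theorem~\ref{K1}. Hence $K_0(Q/\pi(J))=0$.

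Now I would apply the Brown--Pedersen extension theorem for real rank zero (\cite{BrownPedersenRR0}; see also \cite{ZhCoronaI}) to $0\to\pi(J)\to Q\to M/J\to 0$: since $\pi(J)$ and $M/J$ both have real rank zero, $Q$ has real rank zero as soon as every projection of $M/J$ lifts to a projection of $Q$; and this lifting holds because $K_0(M/J)=0$, so that the $K_0$-class of any projection of $M/J$ is trivially in the range of $K_0(Q)\to K_0(M/J)$, and a selfadjoint lift can then be corrected to a projection using the real rank zero of $\pi(J)$. Therefore $Q=M(A\otimes\K)/(A\otimes\K)$ has real rank zero. In particular, since $\mathcal Z\in{\cal A}_0$ has a unique tracial state, $M(\mathcal Z\otimes\K)/(\mathcal Z\otimes\K)$ has real rank zero.

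All the genuine work has already gone into Theorem~\ref{K1}: the vanishing of $K_1(J)$ is exactly what the exponential-length estimates of Section 3 and the corner-unitary construction of Section 4 (culminating in \ref{cornerunit} and \ref{GroupL}) were designed to deliver. The present statement is then a short homological assembly; the one point deserving care is the identification $K_0(M/J)\cong K_1(J)$, which relies on the $K$-theoretic triviality of the multiplier algebra $M(A\otimes\K)$.
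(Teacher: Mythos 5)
Your proof is correct and follows essentially the same route as the paper: both reduce the claim to the Brown--Pedersen extension criterion for the short exact sequence $0\to\pi(J)\to Q\to Q/\pi(J)\to 0$ (citing \cite{ZhPI} for real rank zero of the pure infinite simple pieces), and both compute $K_0(M(B)/J)=0$ by combining $K_1(J)=0$ from Theorem~\ref{K1} with the vanishing of $K_0(M(B))$ in the six-term exact sequence of $0\to J\to M(B)\to M(B)/J\to 0$. The only cosmetic difference is that you invoke $K_1(M(B))=0$ as well to get a clean isomorphism $K_0(M(B)/J)\cong K_1(J)$, whereas the paper only needs the surjectivity of $K_0(M(B))\to K_0(M(B)/J)$ forced by $K_1(J)=0$.
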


\begin{proof} Let $B =_{df} A \otimes K$, and
let $\pi : M(B) \rightarrow M(B)/B$ be the natural quotient map.
Let $J$ be the smallest ideal in $M(B)$ which properly
contains $B$.
Consider the six-term exact sequence:
\begin{equation} \label{FirstSix} \end{equation}
$$
\begin{array}{ccccc}
K_0(J) &\to & K_0(M(B)) & \to & K_0(M(B)/J)\\
\uparrow  &&&& \downarrow\\
K_1(M(B)/J)  &\leftarrow & K_1(M(B)) & \leftarrow & K_1(J)
\end{array}
$$
From Theorem \ref{K1}, $K_1(J)=0.$ The above six-term exact sequence implies that
the map $K_0(M(B)) \rightarrow K_0(M(B)/J)$ is surjective. But it is also known that $K_0(M(B))=0$
(\cite{BlackadarBook} Proposition 12.2.1;
see \cite{Mingo})
Hence,
$K_0(M(B)/J) = 0$. Since $M(B)/J\cong \pi(M(B))/\pi(J),$ $K_0(\pi(M(B))/\pi(J)) = 0.$
Therefore the map
$K_0(\pi(M(B))) \rightarrow K_0(\pi(M(B))/\pi(J))$ is surjective.

Since both $\pi(J)$ and $\pi(M(B))/\pi(J)$ are simple purely infinite
(this is well-known;
an explicit reference can be found in
\cite{KucerovskyNgPerera};  it also follows immediately from
\cite{LinFull} Theorem 3.5 and the definition of $J$ in
\cite{LinContScale} 2.2, Remark 2.9 and Lemma 2.1; see also \cite{ZhStruct} Theorem 2.2 and its
proof),
both $\pi(J)$ and $\pi(M(B))/\pi(J)$ have real rank zero.
It follows from  \cite{BrownPedersenRR0} Theorem 3.14 and Proposition 3.15 that
$\pi(M(B))$ has real rank zero.
\end{proof}

\begin{thm}
$M(\mathcal{Z} \otimes \K)/(\mathcal{Z} \otimes \K)$
has real rank zero.
\end{thm}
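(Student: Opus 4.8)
The plan is to derive this statement immediately from Theorem~\ref{thm:MainRR0} by taking $A={\cal Z}$. The Jiang-Su algebra is a unital separable simple infinite dimensional \CA\ (see \cite{JS}), it is ${\cal Z}$-stable since ${\cal Z}\otimes{\cal Z}\cong{\cal Z}$, and it has a unique tracial state; moreover ${\cal Z}\in{\cal A}_0$, as recorded in Definition~\ref{DTR=0A0Z} (indeed $TR({\cal Z}\otimes U)=0$ for any infinite dimensional UHF-algebra $U$, since ${\cal Z}$ has the $K$-theory of $\C$). Thus ${\cal Z}$ verifies every hypothesis of Theorem~\ref{thm:MainRR0}, and that theorem yields that $M({\cal Z}\otimes{\cal K})/({\cal Z}\otimes{\cal K})$ has real rank zero.

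A small point worth noting is that the lemmas of Section~4 (from Lemma~\ref{L46} onward) which feed, via Theorem~\ref{K1}, into Theorem~\ref{thm:MainRR0} are stated under the running assumption that the unique tracial state of $A$ is also the only normalized quasitrace. For $A={\cal Z}$ this is automatic: ${\cal Z}$ is nuclear, hence exact, so by Haagerup's theorem every quasitrace on ${\cal Z}$ is a trace, and there is only one normalized such. Likewise the property required in Proposition~\ref{L43} holds for ${\cal Z}$, as pointed out in the remark following that proposition.

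I do not expect any essential obstacle here: all of the substantive work has already been done at the level of the class ${\cal A}_0$ --- the exponential-length estimates of Section~3 culminating in Theorem~\ref{Texpl}, the production and manipulation of projections and unitaries in corners of the ideal $J$ in Section~4 (in particular the corner-unitary Lemma~\ref{cornerunit}), the vanishing $K_1(J)=0$ together with the bound ${\rm cel}(u)\le 7\pi$ in Theorem~\ref{K1}, and the concluding argument using the six-term exact sequence, the fact $K_0(M({\cal Z}\otimes{\cal K}))=0$, the pure infiniteness of $\pi(J)$ and $\pi(M({\cal Z}\otimes{\cal K}))/\pi(J)$, and \cite{BrownPedersenRR0}. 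The present theorem is simply the instantiation $A={\cal Z}$ of this machinery, and the proof consists of no more than the verification of hypotheses carried out above.
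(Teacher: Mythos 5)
Your proposal is correct and coincides exactly with the paper's intent: the theorem is the instantiation $A=\mathcal{Z}$ of Theorem~\ref{thm:MainRR0}, and the hypotheses (unital separable simple, $\mathcal{Z}$-stable since $\mathcal{Z}\otimes\mathcal{Z}\cong\mathcal{Z}$, unique tracial state, and $\mathcal{Z}\in\mathcal{A}_0$) are all recorded in Definition~\ref{DTR=0A0Z}. Your side remark about quasitraces via exactness is a sound, if implicit, piece of bookkeeping; the paper leaves the corollary without comment precisely because the verification is immediate.
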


\begin{rem}
We note that Theorem \ref{thm:MainRR0} includes many other C*-algebras  (see \cite{LN-Range})--
including crossed products coming from uniquely ergodic minimal homeomorphisms
on a compact metric space with finite topological dimension (e.g., see
\cite{WinterZ},   \cite{ConnesDyn}.)
\end{rem}

Finally, we end the paper with some K-theory computations
that follow immediately from our work.

\begin{cor}
Let $\pi : M(\mathcal{Z} \otimes \mathcal{K}) \rightarrow
M(\mathcal{Z} \otimes \mathcal{K})/(\mathcal{Z} \otimes \mathcal{K}) =
Q(\mathcal{Z})$ be the natural quotient map.
Let $J$ be the unique smallest ideal of
$M(\mathcal{Z} \otimes \mathcal{K})$ which properly
contains $\mathcal{Z} \otimes \mathcal{K}$.

Then we have the following:
$K_0(J) = \mathbb{R}$, $K_1(J) = 0$,
$K_0(\pi(J)) = \mathbb{R}/\mathbb{Z} \cong \T$,
$K_1(\pi(J)) = 0$,
$K_0(Q(\mathcal{Z})/\pi(J)) = 0$
and $K_1(Q(\mathcal{Z})/\pi(J)) = \mathbb{R}$.
\end{cor}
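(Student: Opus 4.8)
The plan is to reduce everything to two inputs: Theorem \ref{K1}, which gives $K_1(J)=0$ (apply it to $A=\mathcal Z$, which lies in $\mathcal A_0$, is unital separable simple $\mathcal Z$-stable and has a unique tracial state), and the identification $K_0(J)\cong\mathbb R$. The four remaining groups will then drop out of the six-term exact sequences of the extensions $0\to B\to J\to\pi(J)\to0$ and $0\to J\to M(B)\to M(B)/J\to0$, where $B=\mathcal Z\otimes\mathcal K$. The auxiliary facts I will freely use are $K_0(B)=K_0(\mathcal Z)=\mathbb Z$, $K_1(B)=K_1(\mathcal Z)=0$, and $K_0(M(B))=K_1(M(B))=0$ (classical; cf.\ \cite{BlackadarBook} Proposition 12.2.1 and \cite{Mingo}, already invoked in the proof of Theorem \ref{thm:MainRR0}), together with the identifications $\pi(J)\cong J/B$ and $Q(\mathcal Z)/\pi(J)\cong M(B)/J$.

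First I would establish $K_0(J)\cong\mathbb R$ through the trace. The unique trace $\tau$ on $B$ extends to a $[0,\infty]$-valued trace on $M(B)_+$ and on $M_k(M(B))_+$, and it is finite on every projection of $M_k(J)$: by Proposition \ref{prop:RangeOfTrace} and Lemma \ref{L41}, $J$ has an approximate identity $\{f_n\}$ of projections with $\tau(f_n)<\infty$, and if $p\in M_k(J)$ is a projection then $f_npf_n\to p$, so $p\preceq f_n$ for large $n$ and $\tau(p)\le\tau(f_n)<\infty$. Hence $\tau$ induces a group homomorphism $\tau_*:K_0(J)\to\mathbb R$. It is surjective: since $T(\mathcal Z)$ is a single point, Corollary \ref{Extra} produces, for every $r\in(0,\infty)$, a projection $p\in M(B)\setminus B$ with $\tau(p)=r$, and as $\tau(p^*p)=r<\infty$ this $p$ lies in $J$, so the image of $\tau_*$ is a subgroup of $\mathbb R$ containing $(0,\infty)$, hence all of $\mathbb R$. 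It is injective: given projections $p,q\in M_k(J)$ with $\tau(p)=\tau(q)$, choose a projection $r\in J\setminus B$ with $\tau(r)<\infty$; then $p\oplus r$ and $q\oplus r$ are projections of $M_{k+1}(J)\setminus M_{k+1}(B)$ with equal finite trace, so by Proposition \ref{prop:TracialDetermination} there is a partial isometry implementing $p\oplus r\sim q\oplus r$, and it lies in $M_{k+1}(J)$ because it is absorbed on one side by the projection $p\oplus r\in M_{k+1}(J)$; thus $[p]=[q]$ in $K_0(J)$. Therefore $\tau_*:K_0(J)\xrightarrow{\sim}\mathbb R$.

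Next I would feed this into the six-term sequences. For $0\to B\xrightarrow{\iota}J\to\pi(J)\to0$: from $K_1(B)=0$ and $K_1(J)=0$, exactness shows that $K_0(J)\to K_0(\pi(J))$ is onto and that the index map $K_1(\pi(J))\to K_0(B)$ is injective; moreover $\iota_*:K_0(B)=\mathbb Z\to K_0(J)\cong\mathbb R$ sends $[1_{\mathcal Z}\otimes e_{11}]$ to $\tau(1_{\mathcal Z}\otimes e_{11})=1$, so $\iota_*$ is the standard inclusion $\mathbb Z\hookrightarrow\mathbb R$. Consequently the image of the index map is $\ker\iota_*=0$, forcing $K_1(\pi(J))=0$, while $K_0(\pi(J))$ is the cokernel of $\iota_*$, namely $\mathbb R/\mathbb Z\cong\mathbb T$. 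For $0\to J\to M(B)\to M(B)/J\to0$, using $K_0(M(B))=K_1(M(B))=0$, $K_1(J)=0$ and $K_0(J)\cong\mathbb R$, exactness forces $K_0(M(B)/J)=0$ and makes the index map $K_1(M(B)/J)\to K_0(J)$ an isomorphism, so $K_1(M(B)/J)\cong\mathbb R$; since $Q(\mathcal Z)/\pi(J)\cong M(B)/J$ this settles the last two groups, and all six values are as claimed.

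The step I expect to be the real obstacle is the injectivity of $\tau_*$: one must stabilize by a large finite-trace projection of $J\setminus B$ precisely so that Proposition \ref{prop:TracialDetermination} (whose hypothesis requires both projections to lie outside $B$) becomes applicable, and one must verify that the resulting partial isometry genuinely lands in $J$ rather than merely in $M(B)$, together with the preliminary point that $\tau$ is finite on every projection of $J$ so that $\tau_*$ is defined at all. Everything else is routine diagram chasing in the six-term sequences.
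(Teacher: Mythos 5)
Your proof is correct and follows essentially the same route as the paper: $K_1(J)=0$ from Theorem~\ref{K1}, $K_0(J)\cong\mathbb R$ from the trace via Propositions~\ref{prop:TracialDetermination} and~\ref{L43}, and then diagram chases in the two six-term sequences for $B\to J\to\pi(J)$ and $J\to M(B)\to M(B)/J$ using $K_*(B)=(\mathbb Z,0)$, $K_*(M(B))=(0,0)$ and the fact that $\mathbb Z\to\mathbb R$ is the natural inclusion. The paper dispatches $K_0(J)\cong\mathbb R$ with the words ``follows immediately,'' so the part of your write-up that verifies finiteness of the trace on projections of $M_k(J)$, surjectivity via Corollary~\ref{Extra}, and injectivity by stabilizing with a projection in $J\setminus B$ before invoking Proposition~\ref{prop:TracialDetermination} is a welcome filling-in of the detail the authors omitted, not a different argument.
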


\begin{proof}
Let $B =_{df} \mathcal{Z} \otimes \mathcal{K}$.

That $K_0(J) = \mathbb{R}$ follows immediately from
Proposition \ref{prop:TracialDetermination}
and Proposition \ref{L43}.

  That $K_1(J) = 0$ follows immediately from Theorem \ref{K1}.

  That $K_0(Q(\mathcal{Z})/\pi(J)) = 0$ was computed in the
proof of Theorem \ref{thm:MainRR0}.

   Hence, the six-term exact sequence (\ref{FirstSix})
from Theorem \ref{thm:MainRR0} thus becomes
the following six-term exact sequence:

$$
\begin{array}{ccccc}
\mathbb{R} &\to & 0 & \to & 0\\
\uparrow  &&&& \downarrow\\
K_1(M(B)/J)  &\leftarrow & 0 & \leftarrow & 0
\end{array}
$$

Hence, $K_1(Q(\mathcal{Z})/\pi(J)) = K_1(M(B)/J) = \mathbb{R}$.

Next, the exact sequence
$$0 \rightarrow B \rightarrow J \rightarrow J/B \rightarrow 0$$
results in the six term exact sequence

$$
\begin{array}{ccccc}
K_0(B) & \rightarrow & K_0(J) & \rightarrow & K_0(J/B) \\
\uparrow & & & & \downarrow \\
K_1(J/B) & \leftarrow & K_1(J) & \leftarrow & K_1(B)
\end{array}
$$
\noindent which, by the above results, becomes
$$
\begin{array}{ccccc}
\Z & \rightarrow & \mathbb{R} & \rightarrow & K_0(J/B) \\
\uparrow & & & & \downarrow \\
K_1(J/B) & \leftarrow & 0 & \leftarrow & 0
\end{array}
$$

Since the map $\Z \rightarrow \mathbb{R}$ is the natural inclusion,
$K_1(\pi(J)) = 0$ and $K_0(\pi(J)) = \mathbb{R}/\Z \cong \T$.
\end{proof}


\begin{thebibliography}{00}


\bibitem{ABP} R. Antoine, J. Bosa and F. Perera,
\textit{Completions of monoids with applications to the Cuntz semigroup,}
Internat. J. Math., \textbf{22} (2011), no. 6, 837--861.
A copy is available at
http://arxiv.org/pdf/1003.2874.


\bibitem{BlackadarNonunital} B. E. Blackadar,
\textit{A simple C*-algebra with no nontrivial projections.}
Proc. Amer. Math. Soc. \textbf{78} (1980), no. 4, 504--508.

\bibitem{BlackadarUnital} B. E. Blackadar,
\textit{A simple unital projectionless C*-algebra.}
J. Operator Theory, \textbf{5} (1981), no. 1, 63--71.

\bibitem{BlackadarBook} B. E. Blackadar,
K-Theory for operator algebras. Second edition.
Mathematical Sciences Research Institute Publications, 5.
Cambridge University Press, Cambridge, 1998.

\bibitem{BrownLift} L. G. Brown,
\textit{Extensions of AF-algebras,} Operator Algebras and Applications,
(Kingston, Ont. 1980)
R. V. Kadison (ed.), Proc. Symp. Pure Math., \textbf{38} (1982)
175--176.

\bibitem{BrownPedersenRR0} L. G. Brown and G. K. Pedersen,
\textit{C*-algebras of real rank zero,}
Journal of Functional Analysis, \textbf{99} (1991), 131--149.

\bibitem{BrownPereraToms}
N. P. Brown, F. Perera and A. S. Toms,
\textit{The Cuntz semigroup, the Elliott conjecture, and dimension
functions on C*-algebras,} J. Reine Angew. Math. \textbf{621}
(2008), 191--211. A copy is available at
http://arxiv.org/pdf/0609182v2.

\bibitem{ConnesDyn}
A. Connes, \textit{An analogue of the Thom isomorphism for crossed products
of a C*-algebra by an action of $\mathbb{R}$.} Adv. in Math. \textbf{39}
(1981), no. 1, 31--55.


\bibitem{CowardElliottIvanescu}
K. T. Coward, G. A. Elliott and C. Ivanescu,
\textit{The Cuntz semigroup as an invariant for C*-algebras,}
J. Reine Angew. Math. \textbf{623} (2008), 161--193.  A copy is available
at
http://arxiv.org/pdf/0705.0341.

\bibitem{Edwards}
D. A. Edwards,
\textit{Separation des fonctions reelles definies sur un simplexe
de Choquet,} C. R. Acad. Sci. Paris, \textbf{261} (1965),
2798-2800.


\bibitem{Elliott1974}
G. A. Elliott, \textit{Derivations of matroid C*-algebras, II,}
Annals of Mathematics, Second Series \textbf{100} (1974), no. 2,
407--422.

\bibitem{ElliottLift} G. A. Elliott,
\textit{Automorphisms determined by multipliers on ideals of a
C*-algebra,} J. Functional Analysis, \textbf{23} (1976) no. 1,
1--10.

\bibitem{Elliott1987}
G. A. Elliott, \textit{The ideal structure of the multiplier algebra of
an AF-algebra,} C. R. Math. Rep. Acad. Sci. Canada, \textbf{9} (1987),
225--230.

\bibitem{Goodearl}
K. Goodearl, Partially ordered abelian groups with interpolation.  
Math. Surveys and Monographs, 20, Amer. Math. Soc., Providence, 1986.


\bibitem{HigsonRordam}
N. Higson and M. Rordam,
\textit{The Weyl--von Neumann Theorem for multipliers,}
Canad. J. Math., \textbf{43} (1991) no. 2, 322--330.

\bibitem{JensenThomsen} K. K. Jensen and K. Thomsen,
Elements of KK-theory. Mathematics: Theory and Applications. 
Birkhauser Boston, Inc., Boston, MA, 1991. 


\bibitem{JS}X.  Jiang and H.  Su, {\em On a simple unital projectionless \CA}, Amer. J. Math., {\bf  121} (1999), 359--413.

\bibitem{KaftalNgZhang} V. Kaftal, P. W. Ng and S. Zhang,
\textit{Strict comparison of positive elements in multiplier algebras.}
Preprint. A copy is available at
http://arxiv.org/pdf/1501.05463.

\bibitem{Kaplansky} I. Kaplansky, \textit{Functional analysis, some aspects
of analysis and probability,} Wiley, New York, 1958, pp. 1--34.

\bibitem{Kasparov} G. G. Kasparov, \textit{Hilbert C*-modules: Theorems
of Stinespring and Voiculescu,} J. Operator Theory, \textbf{4} (1980),
133--150.

\bibitem{KucerovskyNgPerera}
D. Kucerovsky, P. W. Ng and F. Perera,
\textit{Purely infinite corona algebras of simple C*-algebras,}
Math. Ann. \textbf{346} (2010), no. 1, 23--40.

\bibitem{LaursenSinclair} K. B. Laursen and A. M. Sinclair,
\textit{Lifting matrix units in C*-algebras. II.} 
Math. Scand. \textbf{37} (1975), no. 1, 167--172.

\bibitem{LinMult1988}
H. Lin, \textit{Ideals of multiplier algebras of simple AF C*-algebras,}
Proc. Amer. Math. Soc., \textbf{104} (1988), 239--244.


\bibitem{LinContScale}
H. Lin, \textit{Simple C*-algebras with continuous scales and simple
corona algebras,} Proc. Amer. Math. Soc., \textbf{112} (1991), no. 3,
871--880.

\bibitem{LinWVNI}
H. Lin, \textit{Generalized Weyl--von Neumann Theorems,}
Internat. J. Math. \textbf{2} (1991) no. 6, 725--739.

\bibitem{LinWVNIII}
H. Lin, \textit{Exponential rank of C*-algebras with real rank zero and
the Brown--Pedersen Conjectures,} Journal of Functional Analysis,
\textbf{114} (1993), 1--11.

\bibitem{LinWVNII}
H. Lin, \textit{Generalized Weyl--von Neumann Theorems. II.}
Math. Scand., \textbf{77} (1995) no. 1, 129--147.


\bibitem{Lncs2}
H. Lin, \textit{Simple corona $C\sp *$-algebras},  Proc. Amer. Math. Soc. {\bf 132} (2004), 3215--3224.

\bibitem{Lntr} H. Lin, {\em  The tracial topological rank of \CA s}, Proc. London Math. Soc., {\bf 83} (2001), 199--234.

\bibitem{LinFull} H. Lin, \textit{Full extensions and approximate unitary
equivalence,}  Pacific J. Math., \textbf{229} (2007) no. 2,
389--428.

\bibitem{Lnmem} H. Lin, \textit{Approximate homotopy of homomorphisms from C(X) into a simple C*-algebra},  Mem. Amer. Math. Soc.,
{\bf 205}  (2010), no. 963, vi+131 pp. ISBN: 978-0-8218-5194-4.

\bibitem{LinCuntzSemigroup}
H. Lin, \textit{Cuntz semigroups of C*-algebras of stable rank one
and projective Hilbert modules,} preprint.  A copy is available at
http://arxiv.org/pdf/1001.4558.

\bibitem{Lninv} H. Lin, \textit{ Asymptotic unitary equivalence and classification of simple amenable \CA s},  Invent. Math. {\bf 183} (2011), 385--450.


\bibitem{Lnappeqv} H. Lin, \textit{Approximate unitary equivalence in
simple C*-algebras of tracial rank one,} Trans. Amer. Math. Soc.
\textbf{364} (2012), no. 4, 2021--2086.

\bibitem{LnHAH} H. Lin, \textit{  Homomorphisms from AH-algebras}, preprint,  arXiv:1102.4631.

\bibitem{Lnexp} H. Lin, \textit{Exponentials in simple 
 ${\cal Z}$-stable simple \CA s}, 
 J. Funct. Anal. 266 (2014), no. 2, 754--791. 





\bibitem{LN-Range} H. Lin and Z. Niu, \textit{ The range of a class of classifiable separable simple amenable \CA s},  J. Funct. Anal.,
    {\bf 260},  (2011),  1--29.

\bibitem{LN-3} H. Lin and Z. Niu, {\em  Homomorphisms into a simple
$\mathcal{Z}$-stable C*-algebras}, preprint (arXiv:1003.1760).

\bibitem{LinOsaka} H. Lin and H. Osaka,
\textit{Real rank of multiplier algebras of C*-algebras of real rank
zero.}  Operator algebras and their applications (Waterloo, ON,
1994/1995), 235--241.  Fields Inst. Commun., 13, Amer. Math. Soc.,
Providence, RI, 1997.


\bibitem{LS} H. Lin and W. Sun, {\em Tensor products of classifiable \CA s}, preprint (arXiv:1203.3737).

\bibitem{Mingo} J. A. Mingo,
\textit{K-theory and multipliers of stable C*-algebras,}
Transactions of the American Mathematical Society,
\textbf{299} (1987), no. 1, 397--411.

\bibitem{Murphy} G. J. Murphy, \textit{Diagonality in C*-algebras,}
Math. Z., \textbf{199} (1988) no. 2, 279--284.


\bibitem{ORT} E. Ortega, M. Rordam and H. Thiel,
\textit{The Cuntz semigroup and comparison of open projections,}
J. Funct. Anal., \textbf{260} (2011), 3474--3493.  


\bibitem{PedersenIdeal} G. K. Pedersen, 
\textit{Measure Theory for C*-algebras.}  Math. Scand.
\textbf{19} (1966), 131--145.

\bibitem{PedersenUE} G. K. Pedersen, \textit{Unitary extensions and polar
decompositions in a C*-algebra,} J. Operator Theory
\textbf{17} (1987), 357--364.


\bibitem{Ph} N.C.  Phillips,  {\em Approximation by unitaries with finite spectrum in purely infinite C*-algebras}, J. Funct. Anal.
{\bf 120} (1994), 98--106.

\bibitem{RorUHF1} M. Rordam, \textit{On the structure of simple C*-algebras
tensored with a UHF-algebra,} Journal of Functional Analysis,
\textbf{100} (1991), 1--17.

\bibitem{RorMult} M. Rordam, \textit{Ideals in the multiplier algebra of
a stable C*-algebra,} J. Operator Theory, \textbf{25} (1991), 283--298.

\bibitem{RorWinter} M. Rordam and W. Winter, \textit{The Jiang--Su algebra revisited,}
J. Reine Angew. Math., \textbf{642} (2010), 129--144.

\bibitem{TikuisisToms} A. Tikuisis and A. Toms,
\textit{On the structure of the Cuntz semigroup in (possibly) nonunital
C*-aglebras.} To appear in Canad. Math. Bull.

\bibitem{TomsWinter} A. S. Toms and W. Winter,
\textit{Strongly self-absorbing C*-algebras.} Trans. Amer. Math. Soc.
\textbf{359} (2007), no. 8, 3999--4029. 


\bibitem{WeggeOlsen} N. E. Wegge-Olsen,
K-theory and C*-algebras. Oxford Science Publications.
The Clarendon Press, Oxford University Press, New York,
1993.


\bibitem{WinterZ} W. Winter,
\textit{Decomposition rank and $\mathcal{Z}$-stability.}
Invent. Math. \textbf{179} (2010), no. 2, 229--301.

\bibitem{WinterLocalize} W. Winter,
\textit{Localizing the Elliott conjecture at strongly self-absorbing
C*-algebras.} J. Reine Angew. Math. \textbf{692} (2014), 193--231.


\bibitem{ZhStruct} S. Zhang, \textit{On the structure of projections and
ideals of corona algebras,}
Canadian J. Math. \textbf{41} (1989), 721--742.


\bibitem{ZhPI} S. Zhang, \textit{A property of purely infinite simple
C*-algebras.} Proc. Amer. Math. Soc. \textbf{109} (1990), no. 3,
717--720.

\bibitem{ZhRiesz} S. Zhang, \textit{A Riesz decomposition property and
ideal structure of multiplier algebras,} J. Operator Theory,
\textbf{24} (1990), 209--226.

\bibitem{ZhK_1Flow} S. Zhang, \textit{Trivial $K_1$-flow of AF algebras
and finite von Neumann algebras.}  Journal of Functional Analysis 
\textbf{92} (1990), 77--91. 

\bibitem{ZhangWVN} S. Zhang, \textit{$K_1$-groups, quasidiagonality,
and interpolation by multiplier projections,}
Trans. Amer. Math. Soc. \textbf{325} (1991) no. 2, 792--818.

\bibitem{ZhangWVNII} S. Zhang, \textit{C*-algebras with real rank zero
and their corona and multiplier algebras. IV.}
Internat. J. Math. \textbf{3} (1992) no. 2, 309--330.

\bibitem{ZhCoronaI} S. Zhang, \textit{Certain C*-algebras with real rank
zero and their corona and multiplier algebras. I.} Pacific J. Math.
\textbf{155} (1992), no. 1, 169--197.

\end{thebibliography}
\end{document}